\numberwithin{equation}{section}
\newtheorem{prop}{Proposition}
\newtheorem{lemma}[prop]{Lemma}
\newtheorem{thm}[prop]{Theorem}
\newtheorem{cor}[prop]{Corollary}
\numberwithin{prop}{section}
\theoremstyle{definition}
\newtheorem{defn}[prop]{Definition}
\newtheorem{rmk}[prop]{Remark}
\newtheorem*{thm12}{Theorem 1.2}
\renewcommand{\geq}{\geqslant}
\renewcommand{\leq}{\leqslant}
\newcommand{\del}{\partial}
\newcommand{\dt}{\frac{\partial}{\partial t}}
\newcommand{\brs}[1]{\left| #1 \right|}
\newcommand{\G}{\Gamma}
\newcommand{\gG}{\Gamma}
\renewcommand{\gg}{\gamma}
\newcommand{\gD}{\Delta}
\newcommand{\gd}{\delta}
\newcommand{\gl}{\lambda}
\newcommand{\ga}{\alpha}
\newcommand{\N}{\nabla}
\newcommand{\FF}{\mathcal F}
\newcommand{\LL}{\mathcal L}
\newcommand{\MM}{\mathcal M}
\newcommand{\cP}{\mathcal P}
\newcommand{\cyl}{\mathfrak{C}_{T'}}
\newcommand{\til}[1]{\widetilde{#1}}
\newcommand{\R}{\mathbb{R}}
\newcommand{\E}{\mathbb E}
\newcommand{\eps}{\varepsilon}
\newcommand{\IP}[1]{\left<#1\right>}
\newcommand{\HH}{\mathcal{H}}
\newcommand{\hook}{\mathbin{\hbox{\vrule height2.4pt width4.5pt depth-2pt
\vrule height5pt width0.4pt depth-2pt}}}
\DeclareMathOperator{\Rc}{Rc}
\DeclareMathOperator{\tr}{tr}
\DeclareMathOperator{\divg}{div}
\DeclareMathOperator{\grad}{grad}
\begin{document}

\title[Bochner formulas, functional inequalities and generalized Ricci flow]{Bochner formulas, functional inequalities and generalized Ricci flow}

\begin{abstract} As a consequence of the Bochner formula for the Bismut connection acting on gradients, we show sharp universal Poincar\'e and log-Sobolev inequalities along solutions to generalized Ricci flow.  Using the two-form potential we define a twisted connection on spacetime which determines an adapted Brownian motion on the frame bundle, yielding an adapted Malliavin gradient on path space.  We show a Bochner formula for this operator, leading to characterizations of generalized Ricci flow in terms of universal Poincar\'e and log-Sobolev type inequalities for the associated Malliavin gradient and Ornstein-Uhlenbeck operator.
\end{abstract}

\author{Eva Kopfer}
\address{Institut f\"ur Angewandte Mathematik\\
Universit\"at Bonn, 53115 Bonn, Germany}
\email{\href{mailto:eva.kopfer@iam.uni-bonn.de}{eva.kopfer@iam.uni-bonn.de}}

\author{Jeffrey Streets}
\address{Rowland Hall\\
         University of California, Irvine, CA}
\email{\href{mailto:jstreets@uci.edu}{jstreets@uci.edu}}

\date{\today}

\maketitle

\section{Introduction}

In the analysis of Ricci flow, the classic Bochner formula for gradients plays a key role.  This basic formula underlies gradient estimates for solutions to the heat equation along Ricci flow, and yields functional inequalities such as Wasserstein distance monotonicity \cite{McCannTopping}, and universal Poincar\'e and log-Sobolev inequalities \cite{HeinNaber}.  Furthermore, these functional inequalities can be used to characterize supersolutions to Ricci flow \cite{HaslhoferNaber, McCannTopping}.  Later, through a broad extension of the Bochner formula to functions on path space, Haslhofer-Naber gave a characterization of solutions to Ricci flow \cite{HaslhoferNaber} in terms of universal functional inequalities.  In this paper we extend this circle of ideas to the setting of \emph{generalized Ricci flow}.   A one-parameter family of metrics and two-forms $(g_t, b_t)$ is a solution of generalized Ricci flow \cite{Streetsexpent} if
\begin{align*}
\dt g =&\ - 2 \Rc + \tfrac{1}{2} H^2, \qquad \dt b = - d^*_g H, \qquad H = H_0 + db,
\end{align*}
where $d H_0 = 0$ and 
\begin{align*}
H^2(X,Y) = \IP{X \hook H, Y \hook H}.
\end{align*}
At times we will refer equivalently to the associated pair $(g_t, H_t)$ as a solution to generalized Ricci flow.  It is natural to express this equation using the curvature of the unique metric connection with torsion $H$, referred to as a \emph{Bismut connection}.  If we let $D$ denote the Levi-Civita connection, the relevant Bismut connection is then
\begin{align*}
\N := D + \tfrac{1}{2} g^{-1} H, \qquad \Rc^{\N} = \Rc - \tfrac{1}{4} H^2 - \tfrac{1}{2} d^*_g H.
\end{align*}
It follows that the generalized Ricci flow can be expressed as
\begin{align*}
\dt \left(g - b \right) = - 2 \Rc^{\N},
\end{align*}
where $\Rc^{\N}$ is the Ricci tensor of the Bismut connection.  The flow equation arises naturally as renormalization group flow \cite{Polchinski}, and arises naturally from considerations in complex geometry \cite{st-geom, PCF, GKRF} and generalized geometry \cite{GF19, StreetsTdual}.  We refer to \cite{GRFbook} for further background on generalized Ricci flow.

As solutions to generalized Ricci flow are supersolutions to Ricci flow, the results on Ricci flow supersolutions mentioned above immediately apply without changes.  However, by using the explicit geometric structure of generalized Ricci flow we obtain sharper results.  First we show universal Poincar\'e and log-Sobolev inequalities along solutions to generalized Ricci flow, extending the result of \cite{HeinNaber}.  It is possible to use these inequalities to give characterizations of supersolutions to generalized Ricci flow, although we do not carry this out here.  To state the result we record some notation: given $(M^n, g_t, H_t)$ a generalized Ricci flow on $M \times [0,T]$, for $(x_0,0)\in M \times [0,T]$ let $(s,y)\mapsto p_{T,s}(x_0,y)$ denote the conjugate heat kernel (see Definition \ref{defn: heat}), and let
\begin{align*}
d\nu_s^{x_0}=p_{T,s}(x_0,y)\, dV_{g(s)}.
\end{align*}
Throughout we adopt the convention that by a solution to generalized Ricci flow we mean a smooth solution where each time slice is complete with bounded geometry.

\begin{thm} \label{t:PLS} Let $(M^n, g_t, H_t)$ be a solution to generalized Ricci flow defined on $M \times [0, T]$.  Fix $x_0 \in M$ and $s \in [0, T)$. Then:
\begin{enumerate}
\item For all $\phi \in C_0^{\infty}(M)$ with $\int \phi d \nu_s^{x_0} = 0$, one has
\begin{gather*}
\int \phi^2 d \nu_s^{x_0} \leq 2 (T - s) \int \brs{\N \phi}^2 d \nu_s^{x_0},
\end{gather*}
with equality if and only if either $\phi \equiv 0$, or $(M,g_t, H_t) \cong (M',g'_t, H'_t)\times (\R,dz^2,0)$ for all $t\in[s,T]$ with $z(x_0) = 0$ and $\phi(x)=\lambda z$ for some constant $\lambda\in\R^*$.
\item For all $\phi \in C_0^{\infty}(M)$ with $\int \phi^2 d \nu_s^{x_0} = 1$, one has
\begin{gather*}
\int \phi^2 \log \phi^2 d \nu_s^{x_0} \leq 4 (T - s) \int \brs{\N \phi}^2 d \nu_s^{x_0},
\end{gather*}
with equality if and only if either $\phi \equiv 1$, or $(M,g_t, H_t) \cong (M',g'_t, H'_t)\times (\R,dz^2,0)$ for all $t\in[s,T]$ with $z(x_0) = 0$ and $\phi(x)=\exp( \gl z - 2 \gl^2 (T - s))$ for some constant $\lambda\in\R^*$.
\end{enumerate}
\end{thm}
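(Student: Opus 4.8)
The plan is to follow the Hein--Naber scheme for Ricci flow, replacing the Levi-Civita connection $D$ by the Bismut connection $\N$ everywhere: the point is that generalized Ricci flow is adapted to $\N$ exactly as Ricci flow is adapted to $D$. Fix $x_0$ and $s \in [0,T)$. Recall from Definition~\ref{defn: heat} the conjugate heat kernel $p_{T,t}(x_0,\cdot)$, which is positive on $M$ for $t<T$ by the strong maximum principle and satisfies $d\nu_t^{x_0} = p_{T,t}(x_0,\cdot)\,dV_{g(t)} \to \delta_{x_0}$ weakly as $t \to T$. Since the Bismut Laplacian on functions coincides with the Laplace--Beltrami operator $\Delta$, the heat equation along the flow is simply $\dt \psi = \Delta \psi$, and duality with the conjugate heat equation gives
\begin{equation*}
\dt \int \psi\, d\nu_t^{x_0} = \int \left( \dt \psi - \Delta \psi \right) d\nu_t^{x_0}
\end{equation*}
for every smooth $\psi$ on $M \times [s,T]$.

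\emph{Poincar\'e.} Given $\phi \in C_0^\infty(M)$ with $\int \phi\, d\nu_s^{x_0} = 0$, solve $\dt \phi_t = \Delta \phi_t$ on $[s,T]$ with $\phi_s = \phi$. Then $\int \phi_t\, d\nu_t^{x_0} \equiv 0$, and letting $t \to T$ forces $\phi_T(x_0) = 0$ and $\int \phi_t^2\, d\nu_t^{x_0} \to 0$ (this limit needs standard Gaussian upper bounds on $p_{T,t}$). From $(\dt - \Delta)(\phi_t^2) = -2\brs{\N \phi_t}^2$ and the displayed duality, integrating in $t$ gives $\int \phi^2\, d\nu_s^{x_0} = 2 \int_s^T \int \brs{\N \phi_t}^2\, d\nu_t^{x_0}\, dt$. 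The crucial input is the parabolic Bochner formula for $\N$ along generalized Ricci flow: combining the Bismut Bochner identity
\begin{equation*}
\tfrac12 \Delta \brs{\N f}^2 = \brs{\N^2 f}^2 + \IP{\N f, \N \Delta f} + \Rc^{\N}(\N f, \N f)
\end{equation*}
(the antisymmetric part of $\Rc^{\N}$ drops out on the diagonal, and $\brs{\N^2 f}^2 = \brs{D^2 f}^2 + \tfrac14 H^2(\N f, \N f)$ since the cross term vanishes by symmetry) with $(\dt g^{ij})\, \partial_i f\, \partial_j f = 2\Rc^{\N}(\N f, \N f)$ --- which is the $\N$-version of the Ricci flow identity because $\Rc^{\N,\mathrm{sym}} = \Rc - \tfrac14 H^2$ and $\dt g = -2\Rc + \tfrac12 H^2 = -2\Rc^{\N,\mathrm{sym}}$ --- one obtains
\begin{equation*}
(\dt - \Delta) \brs{\N \phi_t}^2 = -2 \brs{\N^2 \phi_t}^2 \leq 0 .
\end{equation*}
Hence $t \mapsto \int \brs{\N \phi_t}^2\, d\nu_t^{x_0}$ is nonincreasing, and evaluating at $t=s$ yields $\int \phi^2\, d\nu_s^{x_0} \leq 2(T-s) \int \brs{\N \phi}^2\, d\nu_s^{x_0}$, with equality if and only if $\N^2 \phi_t \equiv 0$ on $M$ for every $t \in [s,T)$.

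\emph{Log-Sobolev.} Run the same scheme with $u_t$ solving $\dt u_t = \Delta u_t$, $u_s = \phi^2$ (first for $\phi > 0$, then by approximation): $\int u_t\, d\nu_t^{x_0} \equiv 1$, $u_T(x_0) = 1$, $\int u_t \log u_t\, d\nu_t^{x_0} \to 0$, and since $(\dt - \Delta)(u \log u) = -\brs{\N u}^2/u$ we get $\int \phi^2 \log \phi^2\, d\nu_s^{x_0} = \int_s^T \int \brs{\N u_t}^2 u_t^{-1}\, d\nu_t^{x_0}\, dt$. The analogous entropy--Bochner identity along generalized Ricci flow, with the torsion contributions assembling into the Bismut Hessian $\N^2 \log u = u^{-1}(\N^2 u - u^{-1} \N u \otimes \N u)$, reads
\begin{equation*}
(\dt - \Delta) \left( \frac{\brs{\N u_t}^2}{u_t} \right) = -2\, u_t \brs{\N^2 \log u_t}^2 \leq 0 ,
\end{equation*}
so $t \mapsto \int \brs{\N u_t}^2 u_t^{-1}\, d\nu_t^{x_0}$ is nonincreasing; evaluating at $t=s$, where $\brs{\N u_s}^2 u_s^{-1} = 4\brs{\N \phi}^2$, gives $\int \phi^2 \log \phi^2\, d\nu_s^{x_0} \leq 4(T-s) \int \brs{\N \phi}^2\, d\nu_s^{x_0}$, with equality iff $\N^2 \log u_t \equiv 0$ on $M$ for all $t \in [s,T)$.

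\emph{Rigidity.} In either case equality forces $\N^2 f_t \equiv 0$ for all $t \in [s,T)$, where $f_t = \phi_t$ or $f_t = \log u_t$. Its symmetric part says $D^2 f_t = 0$, so $\N f_t$ (if not identically zero) is a nowhere-vanishing $g_t$-parallel vector field; by de Rham, using completeness of the time slices, $(M, g_t)$ splits isometrically as $(M', g'_t) \times (\R, dz^2)$ with $\partial_z$ the normalized gradient. Its antisymmetric part says $i_{\N f_t} H_t = 0$; since $dH_t = d(H_0 + db_t) = 0$ this also gives $\mathcal{L}_{\partial_z} H_t = 0$, hence $H_t = \pi^* H'_t$ --- the splitting automatically respects $H_t$. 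Because $\Rc^{\N,\mathrm{sym}}_t$ vanishes in the $\partial_z$-direction, so does $\dt g_t$; one checks the splitting is consistent in $t$ and persists to $t = T$, and integrating the heat equation for $f_t$ in the split coordinates, together with the constraints $z(x_0) = 0$ and $\int \phi\, d\nu_s^{x_0} = 0$ (resp. $\int \phi^2\, d\nu_s^{x_0} = 1$) tested against the Gaussian factor of $\nu_s^{x_0}$ in $z$, pins down $\phi_t = \lambda z$ (resp. $\phi_t = \exp(\lambda z - 2\lambda^2(T-t))$) for some $\lambda \in \R^*$; if instead $\N f_t \equiv 0$ then $\phi \equiv 0$ (resp. $\phi \equiv 1$). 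I expect the two main obstacles to be: verifying the entropy--Bochner identity for $\brs{\N u}^2/u$, i.e. checking that \emph{all} the torsion terms organize into $u \brs{\N^2 \log u}^2$ with nothing left over; and the rigidity analysis, in particular propagating the de Rham splitting and the closedness of $H_t$ through the flow and identifying the extremal $\phi$ on each time slice.
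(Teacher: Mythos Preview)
Your proposal is correct and follows essentially the same approach as the paper: both run the Hein--Naber scheme with the Bismut connection, using the parabolic Bochner identity $\square\brs{\N\phi_t}^2=-2\brs{\N^2\phi_t}^2$ (the paper's Lemma~\ref{l:PBF}) and its logarithmic analogue $\square(\brs{\N u_t}^2/u_t)=-2u_t\brs{\N^2\log u_t}^2$, together with duality against the conjugate heat kernel measure, and both reduce rigidity to the vanishing of $\N^2 f$, which splits off an $\R$-factor compatibly with $H$ (the paper's Proposition~\ref{p:splitting}). The only cosmetic difference is that the paper packages the monotonicity as an exact intertwining identity (Proposition~\ref{prop: gradest}) yielding an equality with an explicit Hessian defect term, whereas you argue by monotonicity of $t\mapsto\int\brs{\N\phi_t}^2\,d\nu_t^{x_0}$ directly; and for rigidity the paper gets the splitting at time $s$ and then invokes uniqueness of generalized Ricci flow to propagate it, while you obtain $\N^2\phi_t\equiv0$ at every $t$ and split each slice --- your route then requires the consistency check you flag, which the paper sidesteps via uniqueness.
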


Going further, we will show a generalization of the infinite-dimensional Bochner formula for the Malliavin gradient on path space along Ricci flow as in \cite{ACT08, HaslhoferNaber, Kennedy}.  The starting point of these constructions is to define a connection on the frame bundle of the spacetime associated to a time-dependent Riemannian manifold, originally employed in Hamilton's proof of the Harnack inequality for Ricci flow \cite{HamHarnack}.  It turns out that it is possible to incorporate the two-form potential $b_t$ into this construction in a way that fits very naturally with the generalized Ricci flow equation.  For a family $(M^n, g_t, b_t)$ defined for $t \in I$, we define a connection $\N$ on $\pi^* TM \to M \times I$ which extends the given action of $\N$ via
\begin{align*}
\N_t Y = \del_t Y + \tfrac{1}{2} \del_t \left(g_t - b_t \right)(Y, \cdot)^{\sharp_{g_t}}.
\end{align*}
This operator admits a key Bochner formula, which is central to our constructions.  In particular, given $(g_t, H_t = H_0 + d b_t)$ a general one-parameter family, and $u$ a solution of the time-dependent heat equation, one has that (Proposition \ref{p:gradientev})
\begin{align*}
\N_t \grad_{g_t} u =&\ \gD \grad_{g_t} u - \left( \Rc^{\N} + \tfrac{1}{2} \del_t \left( g_t - b_t \right) \right)\left( \grad_{g_t} u, \cdot \right)^{\sharp_{g_t}}.
\end{align*}
Thus, along a solution to generalized Ricci flow, the gradient of a solution to the heat equation itself satisfies a pure heat equation using the adapted derivative $\N$.  The main goal is to give an extension of the Bochner formula above to path space.  In \S \ref{s:Bochnerpath} we use the connection $\N$ on spacetime defined above together with the antidevelopment map to give the Eels-Elworthy-Malliavin construction of Brownian motion in this setting.  This in turn gives a notion of parallel gradient for martingales.  We then prove a formula on the evolution of parallel gradients of martingales which generalizes the Bochner identity above:
\begin{align*}
d(\nabla_\sigma^\perp F_\tau)=\IP{\nabla_\tau^\perp\nabla_\sigma^\perp F_\tau,\, dW_\tau}
+({\Rc^\nabla}+\frac12{\partial_t (g-b)})_\tau(\nabla_\tau^\perp F_\tau)\mathds{1}_{[\sigma,T]}(\tau)\, d\tau+\nabla_\sigma^\perp F_\sigma\delta_\sigma(\tau)\, d\tau.
\end{align*}
This is a generalization of the Bochner formula described above (cf. Corollary \ref{c:Bochnerreduction}), which occurs as the case where $F$ is a one-point cylinder function.

The path-space Bochner formula above can be used to give many equivalent characterizations of generalized Ricci flow.  First, in Theorem \ref{theorem: char} we give equivalent characterizations in terms of Bochner inequalities on path space.  Next, in Theorem \ref{thm: grad} we show equivalence with universal estimates on the norm and square norm of gradients of martingales.  We note that the adapted geometry on path space also determines an Ornstein-Uhlenbeck operator by composing the Malliavin gradient with its adjoint.  We show equivalence with universal Poincar\'e and log-Sobolev inequalities for this operator on path space, extending the inequalities of Theorem \ref{t:PLS}.  The precise definitions of the objects in the theorem below appear in \S \ref{s:Bochnerpath}.

\begin{thm12}
	For an evolving family of manifolds $(M,g_t,H_t)_{t\in[0,T]}$, the following are equivalent:
	\begin{enumerate}
		\item\label{ithmmain3: 1} The generalized Ricci flow 
		\begin{align*}
		\partial_t(g-b)=-2\Rc^\nabla
		\end{align*}
		is satisfied.
		\item\label{ithmmain3: 2}
		For any $0\leq\sigma\leq T'\leq T$ and any $F\in \cyl$, we have the estimate
		\begin{align*}
	\mathbb E_{(x,T')}[	|\nabla_\sigma^\perp F_{\sigma}|^2]+2 \int_0^{T'} \mathbb E_{(x,T')}[|\nabla_\tau^\perp \nabla_\sigma^\perp F_\tau|^2]\, d\tau \leq \mathbb E_{(x,T')}[|\nabla_\sigma^\perp F|^2] 
		\end{align*}
		 for all $ x\in M$.
			\item\label{ithmmain3: 6} For any $0\leq\tau_1\leq \tau_2\leq T'\leq T$ the Ornstein-Uhlenbeck operator $\LL_{(\tau_1,\tau_2)}$ on parabolic path space $L^2(P_{T'}\MM)$ satisfies the Poincar\'e inequality
		\begin{align*}
		\E_{(x,T')}[(F_{\tau_2}-F_{\tau_1})^2]\leq 2\E_{(x,T')}[F\, \LL_{(\tau_1,\tau_2)}F]
		\end{align*}
		for all $x\in M$.
		\item\label{ithmmain3: 5} For any $0\leq\tau_1\leq \tau_2\leq T'\leq T$ the Ornstein-Uhlenbeck operator $\LL_{(\tau_1,\tau_2)}$ on parabolic path space $L^2(P_{T'}\MM)$ satisfies the log-Sobolev inequality
		\begin{align*}
		&\mathbb E_{(x,T')}[(F^2)_{\tau_2}\log((F^2)_{\tau_2})-(F^2)_{\tau_1}\log((F^2)_{\tau_1})]
		\leq4\mathbb E_{(x,T')}[F\, \LL_{(\tau_1,\tau_2)}F]
		\end{align*}
		for all $x\in M$.
	\end{enumerate}
Moreover, if one of the conditions \eqref{ithmmain3: 1}-\eqref{ithmmain3: 5} is satisfied, we have:
\begin{enumerate}
	\item[(3a)]{\label{ithmmain3: 3a}}
	For any $0\leq T'\leq T$, $F\in \cyl$, we have the Poincar\'e Hessian estimate
	\begin{align*}
	\mathbb E_{(x,T')}[(F-\mathbb E_{(x,T')}[F])^2]+4\int_0^{T'}\int_0^{T'}\mathbb E_{(x,T')}[	|\nabla_\tau^\perp \nabla_\sigma^\perp F_{\tau}|^2]\, d\sigma\, d\tau\leq 2\int_0^{T'}\mathbb E_{(x,T')}[|\nabla_\sigma^\perp F|^2]\, d\sigma
	\end{align*}
	for all $x\in M$.
	\item[(4a)] {\label{ithmmain3: 4a}}For any $0\leq T'\leq T$, $F\in \cyl$, we have the log-Sobolev Hessian estimate
	\begin{align*}
	&\mathbb E_{(x,T')}[F^2\log(F^2)]-\mathbb E_{(x,T')}[F^2]\log(\mathbb E_{(x,T')}[F^2])\\
	&+2\int_0^{T'}\int_0^{T'}\mathbb E_{(x,T')}[(F^2)_\tau |\nabla_\tau^\perp\nabla_\sigma^\perp \log((F^2)_\tau)|^2]\, d\sigma\, d\tau
	\leq 4\int_0^{T'}\mathbb E_{(x,T')}[|\nabla_\sigma^\perp F|^2]\, d\sigma
	\end{align*}
	for all $x\in M$.
\end{enumerate}
\end{thm12}

\textbf{Acknowledgements:} The first named author gratefully acknowledge support by the German Research Foundation through the Hausdorff Center for Mathematics and the Collaborative Research Center 1060. The second named author was supported by the NSF via DMS-1454854.  We thank Robert Haslhofer for helpful conversations.

\section{Universal Poincar\'e and log-Sobolev inequalities along generalized Ricci flow}
\subsection{Conventions}

Before we begin we explicitly clarify our notational conventions and some elementary facts.  Given data $(M^n, g, H)$ of a smooth Riemannian manifold and closed three-form $H$, we let $D$ denote the Levi-Civita connection of $g$ and $\N = D + \tfrac{1}{2} g^{-1} H$ denote the Bismut connection, as explained in the introduction.  We almost exclusively work with $\N$, although in some proofs $D$ makes an appearance.  The connection $\N$ induces connections on all tensor bundles, and furthermore we define a Laplace operator
\begin{align*}
\gD = \tr_g \N \N
\end{align*}
A fundamental point is that the Laplacian acting on functions is the same as the usual Levi-Civita Laplacian, although importantly this is no longer the case for the Laplacian acting on other tensor bundles, in particular acting on $1$-forms and vector fields.  Furthermore, we will typically deal with a one-parameter family $(g_t, H_t = H_0 + db_t)$ of Riemannian metrics and closed three-forms.  Often we will simply describe this as $(g_t, b_t)$, with the background fixed choice of $H_0$ not stated.

\subsection{Heat operators}

\begin{defn}
The heat operator and conjugate heat operator along a solution to generalized Ricci flow are defined by
\begin{align*}
\square:=&\ \partial_t-\Delta\\
\square^*:=&\ -\partial_t-\Delta+R - \frac{1}{4} \brs{H}^2.
\end{align*}
\end{defn}

\begin{lemma}\label{lemma: dual}
Let $[t_1,t_2]\subset [0,T]$. Let $u,v\colon M\times [t_1,t_2]\to\R$ be smooth functions with compact support in $M$. Then
\begin{align*}
\int_{t_1}^{t_2}\int_M(\square u)v-(\square^* v)u\, d V\, dt=\left[\int_M uv\, d V\right]_{t_1}^{t_2}.
\end{align*}
\end{lemma}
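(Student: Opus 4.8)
The statement is a standard integration-by-parts (Green's) identity for the pair of heat operators $\square$ and $\square^*$ along a time-dependent metric, so the plan is a direct computation using the divergence theorem on each time slice together with the variation formula for the volume element. First I would compute the pointwise difference $(\square u) v - (\square^* v) u$. Expanding the definitions,
\begin{align*}
(\square u) v - (\square^* v) u = (\partial_t u) v - (\Delta u) v + (\partial_t v) u + (\Delta v) u - \left( R - \tfrac14 |H|^2\right) uv.
\end{align*}
The first and third terms combine to $\partial_t(uv)$. The key observation is that the curvature term $R - \tfrac14|H|^2$ is exactly what appears when one differentiates the volume form: along generalized Ricci flow $\partial_t(dV_{g(t)}) = \tfrac12 \tr_g(\partial_t g)\, dV = \tfrac12 \tr_g(-2\Rc + \tfrac12 H^2)\, dV = (-R + \tfrac14 |H|^2)\, dV$. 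Hence
\begin{align*}
\partial_t\!\left(\int_M uv\, dV_{g(t)}\right) = \int_M \left[\partial_t(uv) - \left(R - \tfrac14|H|^2\right)uv\right] dV_{g(t)}.
\end{align*}

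Second, I would handle the Laplacian terms. Since $\Delta$ acting on functions coincides with the Levi-Civita Laplacian (as emphasized in the Conventions subsection), and $u,v$ have compact support in $M$ with each time slice complete, the divergence theorem gives $\int_M \left[(\Delta v)u - (\Delta u)v\right] dV_{g(t)} = 0$ for each fixed $t$. Combining this with the volume-variation identity above yields
\begin{align*}
\partial_t\!\left(\int_M uv\, dV_{g(t)}\right) = \int_M \left[(\square u)v - (\square^* v)u\right] dV_{g(t)}.
\end{align*}
Integrating this in $t$ over $[t_1,t_2]$ and applying the fundamental theorem of calculus gives the claimed formula.

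The computation is entirely routine; the only point requiring a little care is justifying the interchange of $\partial_t$ with $\int_M$ and the vanishing of the boundary term in the spatial integration by parts — both are covered by the standing assumption that each time slice is complete with bounded geometry and that $u,v$ are compactly supported in $M$, so no genuine obstacle arises. I would state these justifications briefly rather than dwell on them. One could also present the argument more symmetrically by noting the identity is bilinear and reduces, via a partition of unity, to a local statement, but the direct global computation above is cleanest.
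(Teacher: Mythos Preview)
Your argument is correct and is exactly the standard computation one would expect here. The paper actually states this lemma without proof, treating it as elementary; your derivation --- expanding $(\square u)v - (\square^* v)u$, using the volume-variation formula $\partial_t\, dV_{g(t)} = (-R + \tfrac14|H|^2)\, dV_{g(t)}$ along generalized Ricci flow, and invoking the symmetry of the Laplacian on compactly supported functions --- is the intended argument and fills the omitted details cleanly.
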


\begin{defn}\label{defn: heat}
For $x,y\in M$ and $s<t\in[0,T]$, we let $p_{t,s}(x,y)$ denote the heat kernel based at $(s,y)$, i.e. the unique minimal positive solution to the equations
\begin{align*}
\square_{t,x}p_{t,s}(x,y) =&\ 0,\\
\lim_{t\downarrow s}p_{t,s}(x,y)=&\ \delta_y(x).
\end{align*}
\end{defn}

Observe that by the duality in Lemma \ref{lemma: dual}, the heat kernel $p_{t,s}(x,y)$ equivalently solves the 
conjugate heat equation based at $(t,x)$ in $(s,y)$, i.e.
\begin{align*}
\square_{s,y}^*p_{t,s}(x,y)=&\ 0\\
\lim_{s\uparrow t}p_{t,s}(x,y)=&\ \delta_x(y).
\end{align*}
Consequently, $p_{t,s}(x,y)$ is mass-preserving in $y$ with respect to $dV_{g(s)}$ and
\begin{align*}
\int_M p_{t,s}(x,y)\, d V_{g(s)}(y)=1
\end{align*}
for all $s<t$ and $x\in M$. Moreover, the uniqueness implies the propagator property
\begin{align*}
p_{t,r}(x,z)=\int_M p_{t,s}(x,y)p_{s,r}(y,z)\, dm_s(y)
\end{align*}
for all $r<s<t$ and $x,z\in M$.  With this we can define the heat flow and the conjugate heat flow of a function $u\in C_0^\infty(M)$.

\begin{defn} Let $u,v\in C_0^\infty(M)$ and $\mu\in\cP(M)$. For $s \leq t \in [0, T]$, let $(t,x)\mapsto P_{t,s} u(x)$ denote the heat flow, i.e.
\begin{align*}
(P_{t,s} u)(x) = \int_M p_{t,s}(x,y)u(y) \, dV_{g(s)}(y).
\end{align*}
For $t \geq s \in [0, T]$, let $(s,y)\mapsto P_{t,s}^* v(y)$ denote the conjugate heat flow, i.e.
\begin{align*}
(P_{t,s}^* v)(y) = \int_M p_{t,s}(x,y)v(x) \, dV_{g(t)}(x).
\end{align*}
\end{defn}
In other words, $(t,x)\mapsto P_{t,s}u(x)$ solves the (forward) heat equation from time $s$ with initial condition $u$ to time $t$, whereas $(s,y)\mapsto P_{t,s}^* v(y)$ solves the (backward) conjugate heat equation from time $t$ with terminal condition $v$ to time $s$.  Lastly we record a useful identidity for the heat flow $P_{t,s}$.

\begin{lemma}\label{lemma: operator}
Let $t\in[0,T]$.
For any family of smooth functions $U_s$ parametrized by $s\in(0,t)$,
\begin{align*}
\frac{d}{ds}P_{t,s} U_s=P_{t,s}\square_sU_s. 
\end{align*}
\end{lemma}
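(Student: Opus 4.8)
\subsection*{Proof proposal for Lemma \ref{lemma: operator}}

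The plan is to differentiate the defining integral
\begin{align*}
P_{t,s}U_s(x) = \int_M p_{t,s}(x,y)\, U_s(y)\, dV_{g(s)}(y)
\end{align*}
directly in $s$, keeping track of the three sources of $s$-dependence: the heat kernel $p_{t,s}(x,y)$ in its second slot, the function $U_s$, and the Riemannian volume measure $dV_{g(s)}$. For the kernel term, I would invoke the remark following Definition \ref{defn: heat}: for fixed $(t,x)$ the kernel $y\mapsto p_{t,s}(x,y)$ solves the conjugate heat equation $\square^*_{s,y}p_{t,s}(x,y)=0$, i.e.
\begin{align*}
\partial_s p_{t,s}(x,y) = -\Delta_y p_{t,s}(x,y) + \left(R - \tfrac14\brs{H}^2\right)p_{t,s}(x,y).
\end{align*}

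For the volume-form term, the generalized Ricci flow equation $\partial_s g = -2\Rc + \tfrac12 H^2$ gives $\tr_g \partial_s g = -2R + \tfrac12\brs{H}^2$, hence $\partial_s\, dV_{g(s)} = \left(-R + \tfrac14\brs{H}^2\right)dV_{g(s)}$. The crucial observation is that the zeroth-order factor $R - \tfrac14\brs{H}^2$ coming from the kernel cancels exactly against the $-R + \tfrac14\brs{H}^2$ coming from the evolving volume form --- this is precisely the reason the conjugate heat operator carries those lower-order terms, as reflected in the duality of Lemma \ref{lemma: dual}. After this cancellation one is left with
\begin{align*}
\frac{d}{ds}P_{t,s}U_s(x) = \int_M \left(-\Delta_y p_{t,s}(x,y)\right)U_s(y)\,dV_{g(s)}(y) + \int_M p_{t,s}(x,y)\,\partial_s U_s(y)\,dV_{g(s)}(y),
\end{align*}
and integrating by parts in $y$ --- moving the Levi-Civita Laplacian, which by our conventions is the one appearing in $\square$, onto $U_s$ --- turns the first integral into $\int_M p_{t,s}(x,y)(-\Delta U_s)(y)\,dV_{g(s)}(y)$. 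Combining the two integrals yields $\int_M p_{t,s}(x,y)\,\square_s U_s(y)\,dV_{g(s)}(y)=P_{t,s}\square_s U_s(x)$, as claimed.

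The only genuine work here is the analytic bookkeeping: one must justify differentiating under the integral sign and the integration by parts. This is where the standing assumption that each time slice is complete with bounded geometry enters, via the standard Gaussian-type bounds on $p_{t,s}$ and its first two spatial derivatives on such backgrounds, together with the local uniform smoothness in $s$ of the family $U_s$; these guarantee that the relevant integrals converge locally uniformly in $s$ and that no boundary term appears at spatial infinity. I would record this as a short remark rather than belabor it, since it is routine given the hypotheses; the substantive content of the lemma is the exact cancellation of the $R - \tfrac14\brs{H}^2$ terms produced by the conjugate heat equation and the generalized Ricci flow volume evolution.
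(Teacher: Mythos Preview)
Your proof is correct and follows essentially the same approach as the paper's, just with the details spelled out: the paper's two-line proof asserts $\frac{d}{ds}P_{t,s}u=-P_{t,s}\Delta_{g(s)}u$ for fixed $u\in C_0^\infty(M)$ ``by the definition of $P_{t,s}$'' and then invokes the Leibniz rule, whereas you carry out the underlying computation directly---differentiating the integral, using the conjugate heat equation for $p_{t,s}$ in $(s,y)$, the volume-form evolution, and the resulting cancellation of the $R-\tfrac14|H|^2$ terms. The only organizational difference is that the paper separates the $s$-dependence of the operator from that of $U_s$ via Leibniz, while you handle all three sources of $s$-dependence at once; the substance is identical.
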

\begin{proof}
By the definition of $P_{t,s}$ we have for every $u\in C^\infty_0(M)$ 
\begin{align*}
\frac{d}{ds} P_{t,s}u=-P_{t,s}\Delta_{g(s)}u.
\end{align*}
The claim follows then from the Leibniz rule.
\end{proof}

\subsection{The parabolic Bochner formula}

A fundamental observation for Ricci flow equation is the gradient bound for solutions to the time-dependent heat equation.  In fact, this observation extends to supersolutions of the Ricci flow, and thus automatically applies to solutions of generalized Ricci flow.  The lemma below records the parabolic Bochner formulas for spacetime functions along a solution to generalized Ricci flow, which interestingly is expressed naturally in terms of the Hessian with respect to the Bismut connection.

\begin{lemma} \label{l:PBF} Let $(M^{n}, g_t, H_t)$ denote a solution to generalized Ricci flow.  Then the following hold:
\begin{enumerate}
\item Given $u$ a spacetime function,
\begin{align*}
\square \tfrac{1}{2} \brs{\N u}^2 =&\ - \brs{\N \N u}^2 + \IP{\N u, \N \square u}.
\end{align*}
\item Fix $u$ a spacetime function and $\varphi : \mathbb R \to \mathbb R$.  Setting $U = \varphi(u)$, we have
\begin{align*}
\square U = \varphi' \square u - \varphi'' \brs{\N u}^2.
\end{align*}
\item Fix $u$ a spacetime function and $\psi : \mathbb R \to \mathbb R$.  Setting $U = \psi(u) \brs{\N u}^2$ we have
\begin{align*}
\square U =&\  -2 \psi(u) \left( \brs{\N  \N  u}^2 + \IP{\N u, \N \square u} \right) - 4 \psi'(u) \IP{\N^2 u, \N u \otimes \N u}\\
&\ - \psi''(u) \brs{\N u}^4 + \psi'(u) \square u.
\end{align*}
\end{enumerate}
\begin{proof} For item (1) we apply the usual Bochner formula to obtain
\begin{align*}
\square \tfrac{1}{2} \brs{\N u}^2 =&\ \IP{\N \Delta u, \N u} + \IP{\Rc - \tfrac{1}{4} H^2, \N u \otimes \N u} - \brs{\N^2 u}^2 - \IP{\gD \N u, \N u}\\
=&\ - \brs{D^2 u}^2 - \tfrac{1}{4} \brs{\N u \hook H}^2 + \IP{\N u, \N \square u}.
\end{align*}
We furthermore observe that, since
\begin{align*}
\N = D + \tfrac{1}{2} H g^{-1},
\end{align*}
it follows that
\begin{align*}
\N \N u =&\ D D u + \frac{1}{2} \N u \hook H.
\end{align*}
And hence, since $D D u$ is symmetric and $\N u \hook H$ is skew-symmetric, it follows that
\begin{align*}
\brs{\N \N u}^2 = \brs{D D u}^2 + \tfrac{1}{4} \brs{\N u \hook H}^2,
\end{align*}
yielding item (1).  For item (2) we compute
\begin{align*}
\square \varphi(u) =&\ \varphi'(u) \frac{\del u}{\del t} - \divg \left( \varphi'(u) \N u \right) = \varphi'(u) \square u - \varphi''(u) \brs{\N u}^2.
\end{align*}
For item (3) we compute
\begin{align*}
\square U =&\ \psi'(u) \frac{\del u}{\del t} \brs{\N u}^2 + \psi(u) \dt \brs{\N u}^2 - \divg \left( \psi'(u) \N u \brs{\N u}^2 + \psi(u) \N \brs{\N u}^2 \right)\\
=&\ \psi'(u) \square u + \psi(u) \square \brs{\N u}^2 - \psi''(u) \brs{\N u}^4 - 4 \psi'(u) \IP{\N^2 u, \N u \otimes \N u}\\
=&\ -2 \psi(u) \left( \brs{\N  \N  u}^2 + \IP{\N u, \N \square u} \right) - 4 \psi'(u) \IP{\N^2 u, \N u \otimes \N u}\\
&\ - \psi''(u) \brs{\N u}^4 + \psi'(u) \square u.
\end{align*}
\end{proof}
\end{lemma}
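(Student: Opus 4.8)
The plan is to reduce all three identities to the classical Riemannian Bochner formula for the Levi-Civita connection, using two structural facts about the Bismut connection $\N = D + \tfrac12 g^{-1}H$. First, on scalars $\gD = \tr_g \N\N$ coincides with the Levi-Civita Laplacian (noted in the conventions), so that $\square = \del_t - \Delta$ is unambiguous and the divergence-theorem manipulations below are legitimate. Second, since $\N$ acts trivially on functions we have $\N u = du = Du$, and applying $\N$ to the $1$-form $du$ gives
\begin{align*}
\N\N u = D^2 u + \tfrac12\, \grad_g u \hook H,
\end{align*}
a sum of a symmetric $2$-tensor and a skew one; by $g$-orthogonality of $\Sym^2 T^*M$ and $\Lambda^2 T^*M$ inside $T^*M\otimes T^*M$ this yields $\brs{\N\N u}^2 = \brs{D^2 u}^2 + \tfrac14 \brs{\grad_g u \hook H}^2$ and, for the same reason, $\IP{\N^2 u, \N u\otimes\N u} = \IP{D^2 u, \grad_g u\otimes\grad_g u}$. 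I would establish these two preliminaries at the outset; they are the only genuinely non-formal input.

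For item (1), I would differentiate $\brs{\N u}^2 = g^{ij}\del_i u\,\del_j u$ in time, substitute the flow equation in the form $\del_t g = -2\Rc + \tfrac12 H^2$ to obtain $(\del_t g^{-1})(du,du) = 2\Rc(\grad_g u,\grad_g u) - \tfrac12 H^2(\grad_g u, \grad_g u)$, and combine with the static Bochner identity $\tfrac12\Delta\brs{\grad_g u}^2 = \brs{D^2 u}^2 + \IP{d\Delta u, du} + \Rc(\grad_g u, \grad_g u)$. The Ricci contributions cancel; by definition $H^2(\grad_g u,\grad_g u) = \brs{\grad_g u\hook H}^2$; and the remaining time and Laplacian terms assemble into $\IP{d\square u, du} = \IP{\N u, \N\square u}$. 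Invoking the Pythagorean identity above converts $-\brs{D^2 u}^2 - \tfrac14\brs{\grad_g u\hook H}^2$ into $-\brs{\N\N u}^2$, which is the claim.

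Items (2) and (3) are then formal. For (2) I would write $\square\varphi(u) = \varphi'(u)\,\del_t u - \divg(\varphi'(u)\N u)$ and expand the divergence, using $\brs{\grad_g u}^2 = \brs{\N u}^2$. For (3), with $U = \psi(u)\brs{\N u}^2$, I would expand $\del_t U - \divg\grad U$ via the product and chain rules; the term $\psi(u)\,\square\brs{\N u}^2$ then appears, into which I substitute item (1), while the cross term $\IP{\grad u, \grad\brs{\grad_g u}^2}$ is $2\IP{D^2 u,\grad_g u\otimes\grad_g u}$ and is rewritten via the second preliminary as $2\IP{\N^2 u, \N u\otimes\N u}$; collecting terms gives the stated formula. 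The only real subtlety throughout — such as it is — is the torsion bookkeeping behind the preliminary step: keeping straight that the \emph{scalar} Laplacians and first derivatives for $D$ and $\N$ agree while the tensorial objects do not, so that the skew torsion term survives inside $\brs{\N\N u}^2$ but not inside $\gD u$ or $\IP{\N^2 u, \N u\otimes\N u}$. Everything after that is routine calculus.
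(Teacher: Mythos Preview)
Your proposal is correct and follows essentially the same route as the paper. Both arguments hinge on the Pythagorean identity $\brs{\N\N u}^2 = \brs{D^2 u}^2 + \tfrac14\brs{\grad_g u\hook H}^2$ coming from the symmetric/skew decomposition of $\N\N u$, combine it with the classical Bochner formula and the time derivative of $g^{-1}$ along the flow for item (1), and then derive (2) and (3) by straightforward chain and product rules; your explicit remark that $\IP{\N^2 u,\N u\otimes\N u}=\IP{D^2 u,\grad_g u\otimes\grad_g u}$ by the same orthogonality is a point the paper uses tacitly.
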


Using this lemma we give two useful intertwining relations of the heat flow, in particular generalizing the $L^2$-gradient estimate in the sense of Bakry-\'Emery.
\begin{prop}\label{prop: gradest} Let $(M^{n}, g_t, H_t)$ denote a solution to generalized Ricci flow.
\begin{enumerate}
\item For $u\in C^\infty_0(M)$ it holds
\begin{align*}
\brs{\N P_{t,s}u}_{g(t)}^2=P_{t,s}(\brs{\N u}_{g(s)}^2)
-2\int_s^t P_{t,r}\left(\brs{\N \N P_{r,s}u}_{g(r)}^2\right)\, dr.
\end{align*}
\item For $u\in C^\infty_0(M)$ with $u\geq0$ it holds
\begin{align*}
\frac{\brs{\N P_{t,s}u}_{g(t)}^2}{P_{t,s}u}=P_{t,s}\left(\frac{\brs{\N u}_{g(s)}^2}{u}\right)
-2\int_s^t P_{t,r}\left(P_{r,s}u\brs{\N \N \log P_{r,s}u}_{g(r)}^2\right)\, dr.
\end{align*}
\end{enumerate}
\end{prop}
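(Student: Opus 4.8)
The plan is to differentiate, in the base-point time variable $r$, the heat flow of the relevant pointwise quantity and feed in the parabolic Bochner formulas of Lemma \ref{l:PBF}. Fix $s$ and write $w_r := P_{r,s}u$, so that $(r,x)\mapsto w_r(x)$ is a smooth spacetime function on $M\times(s,T]$ solving the forward heat equation $\square_r w_r=0$; consequently the terms $\IP{\N w,\N\square w}$ in Lemma \ref{l:PBF}(1) and $\psi'(w)\square w$ in Lemma \ref{l:PBF}(3) drop out.

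\emph{Part (1).} Set $F(r):=P_{t,r}\big(\brs{\N w_r}_{g(r)}^2\big)$ for $r\in(s,t]$. By Lemma \ref{lemma: operator}, $F'(r)=P_{t,r}\big(\square_r\brs{\N w_r}^2\big)$, and by Lemma \ref{l:PBF}(1) with $\square w_r=0$ the integrand is $\square_r\brs{\N w_r}^2=-2\brs{\N\N w_r}^2$, so that $F'(r)=-2P_{t,r}\big(\brs{\N\N w_r}_{g(r)}^2\big)$. Integrating over $r\in[s,t]$ and using
\begin{align*}
F(t)=\brs{\N P_{t,s}u}_{g(t)}^2,\qquad \lim_{r\downarrow s}F(r)=P_{t,s}\big(\brs{\N u}_{g(s)}^2\big),
\end{align*}
yields the claimed identity after rearranging. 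The $r\downarrow s$ limit and the integrability of $r\mapsto P_{t,r}(\brs{\N\N w_r}^2)$ near $r=s$ follow from parabolic regularity: since $u\in C_0^\infty(M)$ and the time slices have bounded geometry, $w_r$ and its first two spatial derivatives are uniformly bounded for $r$ close to $s$.

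\emph{Part (2).} Using that $P_{r,s}(u+\eps)=P_{r,s}u+\eps$ since constants are $\square$-harmonic, it suffices to treat bounded $u$ with $\inf u>0$ (the heat flow and the formulas of Lemmas \ref{lemma: operator} and \ref{l:PBF} extending to this class under bounded geometry), and then let $\eps\downarrow0$. Then $w_r>0$ on $[s,t]$ and $G(r):=P_{t,r}\big(\brs{\N w_r}_{g(r)}^2/w_r\big)$ is well-defined. Applying Lemma \ref{l:PBF}(3) with $\psi(x)=x^{-1}$ and $\square w_r=0$ gives
\begin{align*}
\square\!\left(\frac{\brs{\N w_r}^2}{w_r}\right)=-\frac{2}{w_r}\brs{\N\N w_r}^2+\frac{4}{w_r^2}\IP{\N^2 w_r,\N w_r\otimes\N w_r}-\frac{2}{w_r^3}\brs{\N w_r}^4,
\end{align*}
and, using $\N\N\log w=\tfrac1w\N\N w-\tfrac1{w^2}\N w\otimes\N w$ together with $\brs{\N w\otimes\N w}^2=\brs{\N w}^4$, one checks by a direct expansion that the right-hand side equals $-2w_r\brs{\N\N\log w_r}^2$. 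Hence $G'(r)=-2P_{t,r}\big(w_r\brs{\N\N\log w_r}^2\big)$ by Lemma \ref{lemma: operator}, and integrating over $[s,t]$ exactly as in part (1) gives the identity; letting $\eps\downarrow0$ (pointwise for the left-hand side, and by monotone/dominated convergence for the remaining terms) recovers the case $u\geq0$.

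The computations are short, being immediate consequences of Lemmas \ref{lemma: operator} and \ref{l:PBF}. The only points demanding genuine care are the boundary behavior at $r=s$ together with the finiteness of the time integral, handled by standard parabolic estimates under the bounded geometry hypothesis, and — in part (2) — the passage $\eps\downarrow0$, where one must control $\brs{\N u}^2/u$ and the heat flow of $w_r\brs{\N\N\log w_r}^2$ near the zero set of $u$; I expect this last point to be the main, albeit minor, obstacle.
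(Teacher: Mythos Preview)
Your argument is correct and follows essentially the same route as the paper: differentiate $r\mapsto P_{t,r}$ applied to the relevant pointwise quantity via Lemma \ref{lemma: operator}, feed in the parabolic Bochner identities of Lemma \ref{l:PBF} (item (1) for part (1), item (3) with $\psi(x)=x^{-1}$ for part (2)), and integrate. The paper actually cites item (3) with $\psi\equiv 1$ for part (1), but this is of course equivalent to your use of item (1) once $\square w_r=0$. Your added $\eps$-regularization in part (2) and the remarks on the $r\downarrow s$ limit are refinements absent from the paper's formal argument; the paper simply writes the identity for $u\geq 0$ without discussing the zero set.
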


\begin{proof}
By Lemma \ref{lemma: operator} and Lemma \ref{l:PBF} (3) with $\psi=1$ we have
\begin{align*}
\int_s^t \frac{d}{dr} P_{t,r}( \brs{\N P_{r,s} u}_{g(r)}^2) dr =& \int_s^t P_{t,r} \square_r \brs{\N P_{r,s} u}_{g(r)}^2 dr - 2\int_s^t P_{t,r} \brs{\N^2 P_{r,s} u}^2_{g(r)} dr.
\end{align*}
On the other hand, by the fundamental theorem of calculus
\begin{align*}
\int_s^t \frac{d}{dr} P_{t,r}( \brs{\N P_{r,s} u}_{g(r)}^2)dr = \brs{\N P_{t,s} u}_{g(t)}^2- P_{t,s} (\brs{\N u}_{g(s)}^2).
\end{align*}
Combining the two last equations yields item (1).

In order to show item (2) we derive with Lemma \ref{lemma: operator} and Lemma \ref{l:PBF} (3) with the choice $\psi(u)=\frac1u$,
\begin{align*}
&\int_s^t \frac{d}{dr} P_{t,r}\left(\frac{ \brs{\N P_{r,s} u}_{g(r)}^2}{P_{r,s} u}\right) dr
= \int_s^t P_{t,r} \square_r\left(\frac{ \brs{\N P_{r,s} u}_{g(r)}^2}{P_{r,s}u}\right) dr\\
& \qquad =\ - 2\int_s^t P_{t,r} \left( \frac{\brs{\N \N P_{r,s} u}^2_{g(r)}}{P_{r,s}u} 
-2\frac{\IP{\N^2P_{r,s}u,\nabla P_{r,s}u\otimes\nabla P_{r,s}u}}{(P_{r,s}u)^2} 
+\frac{\brs{\N P_{r,s}u}_{g(r)}^4}{(P_{r,s}u)^3}\right) dr\\
& \qquad =\ - 2\int_s^t P_{t,r} \left(P_{r,s}u \brs{\N \N \log P_{r,s} u}^2_{g(r)} \right) dr.
\end{align*}
Again, the fundamental theorem of calculus yields the claim.
\end{proof}

\subsection{Proof of Theorem \ref{t:PLS}}

We end this section with the proof of Theorem \ref{t:PLS}.  We apply the intertwining relations from Proposition \ref{prop: gradest} for certain test functions.  We first prove a splitting result for Bismut connection.

\begin{prop} \label{p:splitting} Let $(M^n, g, H)$ be a smooth Riemannian manifold with $H$ a closed three-form.  Suppose there exists a closed nonvanishing $1$-form $\ga$ such that $\N \ga \equiv 0$.  Then the universal cover $\pi : \til{M} \to M$ splits as $\til{M} = M' \times \mathbb R$, the metric splits $\pi^* g = g' + \pi^* \ga \otimes \pi^* \ga$, where $g'$ is a metric on $M'$, and lastly $\pi^* H = \pi_{M'}^* H'$, where $H'$ is a closed three-form on $M'$.
\begin{proof} Using the definition of $\N $ we observe that
\begin{align*}
0 \equiv \N \ga = D \ga + \tfrac{1}{2} \ga^{\sharp} \hook H.
\end{align*}
Since $\ga$ is closed we have that $D \ga$ is symmetric, whilst the final term is skew-symmetric, thus the two terms on the right hand side above vanish individually.  In particular $\ga$ is parallel with respect to the Levi-Civita connection, and the metric splitting of the universal cover is a standard consequence of the deRham decomposition theorem.  

To show the splitting property of $H$ we let $z$ denote a coordinate on the $\mathbb R$-factor of $\til{M}$, and let $A,B,C \in TM'$.  Note that by construction $\pi^* \ga$ is a nonzero multiple of $\frac{\del}{\del z}$ and thus $\frac{\del}{\del z} \hook \pi^* H \equiv 0$.  Using this and that $\pi^* H$ is closed we furthermore obtain
\begin{align*}
0 =&\ d \pi^* H \left( \frac{\del}{\del z}, A, B, C \right) = D_{\frac{\del}{\del z}} \pi^* H(A,B,C).
\end{align*}
Thus $\pi^* H$ is parallel along $\frac{\del}{\del z}$, and it follows that $\pi^* H = \pi^*_{M'} H'$, with $dH' = 0$, as claimed.
\end{proof}
\end{prop}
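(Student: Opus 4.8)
The plan is to extract from the single hypothesis $\N\ga\equiv0$ two independent facts --- one purely Riemannian, one purely about the torsion --- and then to run the Riemannian fact through the de Rham decomposition theorem and the torsion fact through a short computation on the resulting product. First I would note that since $\N$ is a metric connection, $X\brs{\ga}_g^2=2\IP{\N_X\ga,\ga}=0$ for every $X$, so $\brs{\ga}_g$ is constant and, rescaling $\ga$ by a positive constant (which changes neither hypothesis), we may assume $\brs{\ga}_g\equiv1$. Writing $\N=D+\tfrac12 g^{-1}H$ and expanding on $1$-forms gives the identity of $(0,2)$-tensors $\N\ga=D\ga+\tfrac12\,\ga^\sharp\hook H$. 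Because $\ga$ is closed, $d\ga(X,Y)=(D_X\ga)(Y)-(D_Y\ga)(X)=0$, so $D\ga$ is symmetric, while $\ga^\sharp\hook H$ is skew since $H$ is a $3$-form; hence $\N\ga=0$ forces $D\ga\equiv0$ and $\ga^\sharp\hook H\equiv0$ separately.

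Next I would feed $D\ga\equiv0$ into the de Rham machine: $\ga^\sharp$ is a parallel unit vector field for the Levi-Civita connection, so the holonomy of $\pi^*g$ on the simply connected cover $\til M$ fixes the lift of $\ga^\sharp$ and is reducible. Assuming $g$ complete (so $\pi^*g$ is complete --- available for the time slices to which this proposition is applied), the de Rham decomposition theorem yields an isometry $(\til M,\pi^*g)\cong(M',g')\times(\R,dz^2)$ in which $\del/\del z$ is the lift of $\ga^\sharp$. Dualizing, $\pi^*\ga=dz$, so $\pi^*g=g'+\pi^*\ga\otimes\pi^*\ga$, which is the claimed metric splitting. (If one wishes to avoid completeness, the same structure holds locally, since $\langle\ga^\sharp\rangle^\perp$ is an integrable totally geodesic distribution and the flow of the parallel unit field $\ga^\sharp$ trivializes the $\R$-direction.)

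Finally I would handle $H$ on this product. From $\ga^\sharp\hook H\equiv0$ we get $\del/\del z\hook\pi^*H\equiv0$, so at each $z$ the form $\pi^*H$ is the pullback of a $3$-form $H(z)$ on $M'$. To see $H(z)$ is $z$-independent I use closedness: for coordinate fields $A,B,C$ on $M'$, which commute with $\del/\del z$, Cartan's formula gives $0=d(\pi^*H)\big(\tfrac{\del}{\del z},A,B,C\big)=\tfrac{\del}{\del z}\big(\pi^*H(A,B,C)\big)$, every other term in the expansion contracting $\del/\del z$ into $\pi^*H$ and so vanishing. Hence $H(z)=H'$ is independent of $z$, i.e. $\pi^*H=\pi_{M'}^*H'$, and the remaining components of $d(\pi^*H)=0$ read $d_{M'}H'=0$. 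The argument is essentially bookkeeping once $\N\ga=0$ has been split into symmetric and skew parts; the only point requiring genuine care is the passage from the local to the global product, i.e. the appeal to de Rham and hence to completeness of $g$.
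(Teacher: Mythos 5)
Your proposal is correct and follows essentially the same route as the paper: split $\N\ga=0$ into its symmetric part $D\ga=0$ and skew part $\ga^\sharp\hook H=0$, apply the de Rham decomposition theorem to the parallel one-form, and then use $\del_z\hook\pi^*H=0$ together with $d\pi^*H=0$ to see that $\pi^*H$ is the pullback of a closed three-form on $M'$. The extra points you flag (constancy of $\brs{\ga}$ via metricity of $\N$, and completeness of the time slices for de Rham) are sensible refinements consistent with the paper's standing conventions, not a different argument.
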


\begin{proof}[Proof of Theorem \ref{t:PLS} (1)] Let $\varphi(x) = x^2$. Let $u\in C^\infty_0(M)$ and recall that
\begin{align*}
d\nu_s^{x_0}= p_{T,s}(x_0,\cdot)\, dV_{g(s)}.
\end{align*}
We note
\begin{align*}
- \int_s^T \frac{d}{dt} P_{T,t} ( \varphi( P_{t,s} u))(x_0) \, dt =&\ P_{T,s} (\varphi( P_{s,s} u))(x_0) - P_{T,T}(\varphi( P_{T,s} u))(x_0)\\
=&\ \int_M \varphi(u)(y) p_{T,s}(x_0,y) \, dV_{g(s)}(y) - \varphi \left( \int_M u(y) p_{T,s}(x_0,y) \, dV_{g(s)}(y) \right)\\
=&\ \int_M u^2 \, d\nu_s^{x_0} - \left( \int_M u \, d \nu_s^{x_0} \right)^2.
\end{align*}
But on the other hand using Lemma \ref{lemma: operator} and Lemma \ref{l:PBF} (2) we obtain
\begin{align*}
- \int_s^T \frac{d}{dt} P_{T,t} ( \varphi( P_{t,s} u))(x_0) \, dt =&\ - \int_s^T P_{T,t} \square_t (\varphi (P_{t,s} u))(x_0) \, dt\\
=&\ 2\int_s^T P_{T,t} ( \brs{\N P_{t,s} u}^2_{g(t)})(x_0)\,  dt.
\end{align*}
Applying Proposition \ref{prop: gradest} (1) to the last term we get
\begin{align*}
- \int_s^T \frac{d}{dt} P_{T,t} ( \varphi( P_{t,s} u))(x_0)\,  dt=&\ 2\int_s^T P_{T,t}\left [ P_{t,s}(\brs{\N u}^2_{g(s)})(x_0)-2\int_s^tP_{t,r}(\brs{\N \N P_{r,s}u}^2)(x_0)\, dr\right] \, dt
\end{align*}
Combining these equations yields
\begin{align*}
\int_M u^2 & \, d\nu_s^{x_0} - \left( \int_M u \, d \nu_s^{x_0}\right)^2\\
=&\ 2 \int_s^T P_{T,t} P_{t,s} \brs{\N u}_{g(s)}^2(x_0) \, dt - 4 \int_s^T P_{T,t} \left[ \int_s^t P_{t,r} \left( \brs{\N \N P_{r,s} u}^2_{g(r)}(x_0) \right)\,  dr \right] \, dt\\
=&\ 2 \int_s^T P_{T,s} \brs{\N u}_{g(s)}^2(x_0) dt - 4 \int_s^T P_{0,t} \left[ \int_s^t P_{t,r} \left( \brs{\N \N P_{r,s} u}^2_{g(r)}(x_0) \right) \, dr \right] \, dt\\
=&\ 2 (T - s) \int_M \brs{\N u}_{g(s)}^2(y) p_{T,s}(x_0,y)\,  dV_{g(s)}(y) -4 \int_s^T P_{T,t} \left[ \int_s^t P_{t,r} \left( \brs{\N \N P_{r,s} u}^2_{g(r)}(x_0) \right) \, dr \right] \, dt\\
=&\ 2 (T - s) \int_M \brs{\N u}_{g(s)}^2 \, d\nu_s^{x_0} - 4 \int_s^T P_{T,t} \left[ \int_s^t P_{t,r} \left( \brs{\N \N P_{r,s} u}^2_{g(r)}(x_0) \right) \, dr \right] \, dt\, .
\end{align*}
This implies the Poincar\'e inequality of item (1). 

Equality occurs if and only if 
\begin{align*}
 \int_s^T P_{T,t} \left[ \int_s^t P_{t,r} \left( \brs{\N \N P_{r,s} u}^2_{g(r)}(x_0) \right) \, dr \right] \, dt=0.
\end{align*}
Due to the maximum principle of $P_{t,r}$ and $P_{T,t}$ we have that $\brs{\N\N P_{r,s}u}_{g(r)}$ needs to vanish at time $r=s$, i.e. $\brs{\N\N  u}_{g(s)}=0$.  It follows from Proposition \ref{p:splitting} that the data at time $s$ splits as claimed.  From uniqueness of solutions to generalized Ricci flow in the class of solutions with bounded geometry (cf. \cite{ChenZhu} which extends to generalized Ricci flow), it follows that the solution splits for all times, as claimed.
\end{proof}

\begin{proof}[Proof of Theorem \ref{t:PLS} (2)] Let $\varphi(x)=x\log x$ and $u\in C^\infty_0(M)$ with $u\geq0$.
With this we obtain from Proposition \ref{prop: gradest} (2) that
\begin{align*}
\int_M u\log u\, d\nu_s^{x_0}-\int_M u\, d\nu_s^{x_0}\log\left(\int_M u\, d\nu_s^{x_0}\right)
=\int_s^T P_{T,t}\left(\frac{\brs{\N P_{t,s}u}_{g(t)}^2}{P_{t,s}u}\right)(x_0)\, dt.
\end{align*}
Applying Proposition \ref{prop: gradest} to the right hand side we get
\begin{align*}
& \int_M u\log u\, d\nu_s^{x_0}-\int_M u\, d\nu_s^{x_0}\log\left(\int_M u\, d\nu_s^{x_0}\right)\\
&\ \qquad = \int_s^T P_{T,t}\left[P_{t,s}\left(\frac{\brs{\N u}_{g(s)}^2}{u}\right)-2\int_s^tP_{t,r}(P_{r,s}u \brs{\N \N \log P_{r,s}u}^2)\, dr\right](x_0)\, dt\\
&\ \qquad = (T - s)P_{T,s}\left(\frac{\brs{\N u}_{g(s)}^2}{u}\right)-2\int_s^T P_{T,t}\left[\int_s^tP_{t,r}(P_{r,s}u \brs{\N \N \log P_{r,s}u}^2)\, dr\right](x_0)\, dt\\
&\ \qquad = (T - s) \int_M\frac{\brs{\N u}_{g(s)}^2}{u}\, d\nu_s^{x_0}-2\int_s^T P_{T,t}\left[\int_s^tP_{t,r}(P_{r,s}u \brs{\N \N \log P_{r,s}u}^2)\, dr\right](x_0)\, dt.
\end{align*}
Then if $\int_M u\, d\nu_s^{x_0}=1$ we set $\phi=\sqrt{u}$ and obtain item (2).  The case of equality is treated the same as in item (1).
\end{proof}

\section{Twisted parallel transport and Frame bundle formalism}

In this section we define a connection on a spacetime adapted to a solution of generalized Ricci flow.  The key point is a Bochner formula for solutions to the time-dependent heat equation, Proposition \ref{p:gradientev}, which lies at the heart of the path space constructions to follow.  We also use this connection to recast the time-dependent geometry on the frame bundle, which is necessary for the construction of the adapted Brownian motion et al.

\subsection{The twisted connection on spacetime}

Let $(M^n, g_t, b_t)$ be a one-parameter family of generalized metrics.  Let $\mathcal M = M \times I$ for some time interval $I$.  Let $\del_t$ denote the canonical vector field on $I$ lifted to $\mathcal M$.  We define a connection on the vector bundle $\pi^* TM \to \mathcal M$ which extends the action of $\N$ via
\begin{align*}
\N_t Y =&\ \del_t Y + \tfrac{1}{2} \del_t \left(g_t + b_t \right)(Y, \cdot)^{\sharp_{g_t}},
\end{align*}
This generalizes Hamilton's spacetime connection introduced in his derivation of the Harnack estimate \cite{HamHarnack}.  The term involving the time derivative of $g$ is natural to include as it renders the connection compatible with the time-dependent metric.  In fact one is free to add the action of an arbitrary skew-symmetric two-form as well and still preserve this property (cf. Lemma \ref{l:compatibility}).  As it turns out, the precise term $\tfrac{1}{2} \del_t b_t$ gives the connection $\N$ particularly favorable properties in the case of a solution to generalized Ricci flow.

\begin{lemma} \label{l:compatibility} The spacetime connection $\N$ is compatible with $g$.
\begin{proof}
This follows from
\begin{align*}
\frac{d}{dt} \brs{Y}^2_{g_t} =&\ \del_t g_t \left( Y, Y \right) + g_t \left( \del_t Y, Y \right) + g_t \left( Y, \del_t Y \right)\\
=&\ g_t \left( \del_t Y + \tfrac{1}{2} \del_t g_t(Y, \cdot)^{\sharp_{g_t}}, Y \right) + g_t \left( Y, \del_t Y + \tfrac{1}{2} \del_t g_t(Y, \cdot)^{\sharp_{g_t}}\right)\\
=&\ g_t \left( \del_t Y + \tfrac{1}{2} \del_t \left( g_t + b_t \right) (Y, \cdot)^{\sharp_{g_t}}, Y \right) + g_t \left( Y, \del_t Y + \tfrac{1}{2} \del_t \left( g_t + b_t \right) (Y, \cdot)^{\sharp_{g_t}}\right)\\
=&\ 2 g_t (\N_t Y, Y),
\end{align*}
where the third line follows since $b$ is skew-symmetric.
\end{proof}
\end{lemma}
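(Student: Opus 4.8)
The plan is to verify directly the Leibniz identity characterizing metric compatibility, namely that for all sections $Y,Z$ of $\pi^*TM \to \mathcal M$ and in every direction on $\mathcal M$ one has $\N\bigl(g(Y,Z)\bigr) = g(\N Y, Z) + g(Y,\N Z)$. In the purely spatial directions there is nothing to prove: along each time slice $\N$ is by construction the Bismut connection $D + \tfrac12 g^{-1}H$, which is metric. So the only new content lies in the $\del_t$-direction, and by polarization it is enough to check $\del_t \brs{Y}^2_{g_t} = 2 g_t(\N_t Y, Y)$.

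First I would expand the left-hand side as $\del_t \brs{Y}^2_{g_t} = (\del_t g_t)(Y,Y) + 2 g_t(\del_t Y, Y)$. Then I would expand the right-hand side using the definition $\N_t Y = \del_t Y + \tfrac12 \del_t(g_t+b_t)(Y,\cdot)^{\sharp_{g_t}}$: since $\del_t(g_t+b_t)(Y,\cdot)^{\sharp_{g_t}}$ is by definition the $g_t$-dual of the one-form $W \mapsto \del_t(g_t+b_t)(Y,W)$, taking the $g_t$-inner product with $Y$ gives $2 g_t(\N_t Y, Y) = 2 g_t(\del_t Y, Y) + \del_t(g_t+b_t)(Y,Y)$. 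Comparing the two expressions, the identity reduces to $\del_t(g_t+b_t)(Y,Y) = (\del_t g_t)(Y,Y)$, which holds because $b_t$, and hence $\del_t b_t$, is skew-symmetric and so has vanishing diagonal. This computation also makes transparent the remark preceding the lemma: only the symmetric part of the zeroth-order term of $\N_t$ enters $g_t(\N_t Y, Y)$, so one may freely add the action of any skew-symmetric two-form without disturbing compatibility.

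I do not expect any genuine obstacle here; the single point requiring care is the musical convention, namely that the sharp is taken with respect to the evolving metric $g_t$ itself, which is precisely what collapses $g_t\bigl(\del_t(g_t+b_t)(Y,\cdot)^{\sharp_{g_t}}, Y\bigr)$ to $\del_t(g_t+b_t)(Y,Y)$ rather than to a pairing against a fixed background metric. It is also worth recording explicitly, as above, that compatibility in the spatial directions is inherited from the Bismut connection, so that the assembled connection on $\pi^*TM \to \mathcal M$ is metric as a whole.
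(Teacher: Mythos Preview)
Your proof is correct and takes essentially the same approach as the paper: both reduce to the observation that the skew-symmetry of $\del_t b_t$ makes its diagonal contribution vanish, so only the $\del_t g_t$ term survives in $g_t(\N_t Y, Y)$. Your version is arguably slightly more complete in that you explicitly dispose of the spatial directions by invoking that the Bismut connection is metric, whereas the paper's proof treats only the $\del_t$-direction and leaves this implicit.
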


\begin{lemma} \label{l:commutatorformula} Given $(M, g, H)$ as above one has
\begin{align*}
\gD \N u = \N \gD u + \Rc^{\N}( \N u, \cdot).
\end{align*}
\end{lemma}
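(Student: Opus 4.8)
The plan is to prove the commutator identity $\gD \N u = \N \gD u + \Rc^{\N}(\N u,\cdot)$ by a direct computation in a normal frame, reducing it to the standard Bochner–Weitzenböck commutator for the Bismut connection. The key structural point is that $u$ is a function, so $\N u = D u$ as a $1$-form (the torsion term $\tfrac12 g^{-1}H$ only acts on the tensorial slots, and on a function there are none), while $\gD u = \tr_g \N\N u = \tr_g \N du$ still equals the Levi-Civita Laplacian $\gD_D u$ because the trace kills the skew-symmetric torsion contribution, exactly as noted in the Conventions subsection. So both sides of the claimed identity are built from $du$, its second and third $\N$-derivatives, and the curvature $\Rm^\N$.

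First I would recall the general Ricci/Bochner commutation rule for any metric connection with torsion: for a $1$-form $\omega$ on $(M,g)$ one has, schematically,
\begin{align*}
\N_i \N_j \N_k \omega_l - \N_j \N_i \N_k \omega_l = - \Rm^\N{}_{ijk}{}^m \N_m \omega_l - \Rm^\N{}_{ijl}{}^m \N_k \omega_m - T^\N{}_{ij}{}^m \N_m \N_k \omega_l,
\end{align*}
where $T^\N = H$ is the torsion. Tracing over $i$ with $k$ via $g$ and applying this to $\omega = du$, the curvature terms collapse: one contraction produces the Ricci tensor $\Rc^\N$ of the Bismut connection paired with $\N u$, and the genuinely second-order curvature term contracts against $\N^2 u = \N du$. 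The latter term is where one must be careful, but because $du$ is exact the relevant piece of $\N du$ that survives contraction is controlled — here I would use $\N\N u = DDu + \tfrac12 \N u \hook H$ from Lemma \ref{l:PBF}, so that $\N^2 u$ has a symmetric Levi-Civita Hessian part and an explicitly skew part built from $H$, and the $H$-contractions that appear can be matched against the difference between $\Rc^\N$ and $\Rc$ (recall $\Rc^\N = \Rc - \tfrac14 H^2 - \tfrac12 d^*_g H$). In other words, I expect the Levi-Civita Bochner identity $\gD_D du = d \gD_D u + \Rc(du,\cdot)$ to furnish the Levi-Civita ``core,'' and all torsion corrections to reorganize precisely into the shift $\Rc \rightsquigarrow \Rc^\N$, using the Bismut first Bianchi identity and the definition of $\Rc^\N$.

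The main obstacle, and the step I would spend the most care on, is bookkeeping the torsion terms: unlike the torsion-free case there are extra first-order torsion contributions $T^\N \cdot \N^2 u$ in the commutator, and one must verify that these combine with the antisymmetrization of the curvature (via the Bismut Bianchi identity, $dH = 0$ being the key input) so that only the single clean term $\Rc^\N(\N u,\cdot)$ remains, with no residual $H \ast \N^2 u$ or $H^2 \ast \N u$ pieces. A clean way to organize this is to work entirely with $\N$ (not mixing in $D$) and use that $\gD$ on functions equals $\N^i\N_i$, so $\N \gD u = \N^i \N_i \N u$ up to a commutator that is again pure curvature; the whole identity is then the statement that the ``$\tr$ then $\N$'' and ``$\N$ then $\tr$'' orders of the third derivative of $u$ differ exactly by $\Rc^\N(\N u,\cdot)$. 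I would also double-check signs against the special case where $H \equiv 0$, which must recover the classical $\gD\, du = d\,\gD u + \Rc(du,\cdot)$, and against the product form used later in Lemma \ref{l:PBF}(1), which is consistent with this commutator.
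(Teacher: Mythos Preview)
Your proposal is on the right track and would succeed, but the paper's proof follows what you call the ``clean way'' at the end rather than your primary plan: it works entirely with $\nabla$ in local coordinates, computing $(\nabla\nabla\nabla u)_{ijk} - (\nabla\nabla\nabla u)_{jik}$ directly (this yields a curvature term plus a torsion term $-H_{ij}^l(\nabla\nabla u)_{lk}$), together with $\nabla_j\nabla_k u - \nabla_k\nabla_j u = -H_{jk}^p\,\partial_p u$, and then traces. No passage through a Levi-Civita ``core'' is made.

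Two points worth flagging. First, your claim that $dH = 0$ is the key input is incorrect: the simplification uses \emph{only} the total skew-symmetry of $H$. The second-order torsion residues $g^{jk}H_{ik}^p\nabla_j\nabla_p u + g^{jk}H_{ij}^l\nabla_l\nabla_k u$ cancel because one is contracting something skew in two indices against something symmetric in those indices, and the $H\ast H$ terms arising from $\nabla_j H_{ik}^p$ cancel pairwise for the same reason; no Bianchi identity enters. Second, there is an index-ordering subtlety that your ``shift $\Rc \rightsquigarrow \Rc^\nabla$'' heuristic may obscure: after the cancellations one lands on
\[
\Delta\nabla_i u = \nabla_i\Delta u + (\Rc^\nabla)_i{}^p\, d_p u + (d^*_g H)_i{}^p\, d_p u,
\]
and the last step uses that $\Rc^\nabla$ is \emph{not} symmetric --- since $\Rc$ and $H^2$ are symmetric while $d^*_g H$ is skew, one has $(\Rc^\nabla)_{ip} + (d^*_g H)_{ip} = (\Rc^\nabla)_{pi}$ --- to obtain exactly $\Rc^\nabla(\nabla u,\cdot)$ with $\nabla u$ in the first slot.
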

\begin{proof}
We choose local coordinates and let $\gG$ denote the connection coefficients of $\N $.  We then compute
\begin{align*}
\left( \N  \N  \N  u \right)_{ijk}  =&\ \del_i \left( \N  \N u \right)_{jk} - \gG_{ij}^l \left(\N  \N u \right)_{lk} - \gG_{i k}^l \left(\N  \N u \right)_{jl}\\
=&\ \del_i \left( \del_j \del_k u - \gG_{jk}^p \del_p u \right) - \gG_{ij}^l \left( \del_l \del_k u - \gG_{lk}^p \del_p u \right) - \gG_{ik}^l \left( \del_j \del_l u - \gG_{jl}^p \del_p u \right).
\end{align*}
It follows that
\begin{align*}
\left( \N  \N  \N  u \right)_{ijk}  & - \left( \N  \N  \N  u \right)_{jik}\\
=&\ \del_i \left( \del_j \del_k u - \gG_{jk}^p \del_p u \right) - \gG_{ij}^l \left( \del_l \del_k u - \gG_{lk}^p \del_p u \right) - \gG_{ik}^l \left( \del_j \del_l u - \gG_{jl}^p \del_p u \right)\\
&\ - \left( \del_j \left( \del_i \del_k u - \gG_{ik}^p \del_p u \right) - \gG_{ji}^l \left( \del_l \del_k u - \gG_{lk}^p \del_p u \right) - \gG_{jk}^l \left( \del_i \del_l u - \gG_{il}^p \del_p u \right) \right)\\
=&\ \del_p u \left( - \del_i \gG_{jk}^p + \del_j \gG_{ik}^p - \gG_{jk}^l \gG_{il}^p + \gG_{ik}^l \gG_{jl}^p \right) - H_{ij}^l (\N  \N  u)_{lk}\\
=&\ - (R^{\N})_{ijk}^p d_p u - H_{ij}^l (\N  \N  u)_{lk}.
\end{align*}
Also we have
\begin{align*}
\N _j \N _k u - \N _k \N _j u =&\ \left( \del_j \del_k u - \gG_{jk}^p \del_p u \right) - \left( \del_k \del_j u - \gG_{kj}^p \del_p u \right) = - H_{jk}^p \del_p u.
\end{align*}
Combining these we then have
\begin{align*}
\N _i \gD u =&\ g^{jk} \N _i \N _j \N _k u\\
=&\ g^{jk} \left( \N _j \N _i \N _k u - (R^{\N})_{ijk}^p d_p u - H_{ij}^l (\N  \N  u)_{lk} \right)\\
=&\ g^{jk} \left( \N _j \N _k \N _i u - \N _j (H_{ik}^p d_p u) - (R^{\N})_{ijk}^p d_p u - H_{ij}^l (\N  \N  u)_{lk} \right)\\
=&\ \gD \N_i u - (\Rc^{\N})_{i}^p d_p u - g^{jk} \left( \N _j (H_{ik}^p d_p u) + H_{ij}^l (\N  \N  u)_{lk} \right).
\end{align*}
Then we simplify
\begin{align*}
g^{jk} & \left( \N_j \left( H_{ik}^p d_p u \right) + H_{ij}^l (\N  \N  u)_{lk} \right)\\
=&\ g^{jk} \left( D_j H_{ik}^p - \tfrac{1}{2} H_{ji}^l H_{lk}^p - \tfrac{1}{2} H_{jk}^l H_{il}^p + \tfrac{1}{2} H_{jl}^p H_{ik}^l \right) d_p u + g^{jk} H_{ik}^p \N_j \N_p u + g^{jk} H_{ij}^l \N _l \N _k u\\
=&\ (d^*_g H_{i}^p) d_p u,
\end{align*}
where the last line follows using the skew-symmetry of $H$.  This finally yields, after rearranging,
\begin{align*}
\gD \N_i u =&\ \N_i \gD u + (\Rc^\N)_i^p d_p u + (d^*_g H)_i^p d_p u\\
=&\ \N_i \gD u + (\Rc - \tfrac{1}{4} H^2 - \tfrac{1}{2} d^*_g H)_i^p d_p u + (d^*_g H)_i^p d_p u\\
=&\ \N_i \gD u + g^{pq} \Rc^{\N}_{qi} d_p u,
\end{align*}
as claimed.
\end{proof}

\begin{prop} \label{p:gradientev} Let $(M^n, g_t, b_t)$ denote a time dependent family as above, and suppose $u$ solves $\square u = 0$.  Then
\begin{align*}
\N_t \grad_{g_t} u =&\ \gD \grad_{g_t} u - \left( \Rc^{\N} + \tfrac{1}{2} \del_t \left( g_t - b_t \right) \right)\left( \grad_{g_t} u, \cdot \right)^{\sharp_{g_t}}.
\end{align*}
\end{prop}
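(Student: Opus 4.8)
The plan is to compute $\N_t \grad_{g_t} u$ directly from the definition of the spacetime connection and then recognize the output using the commutator formula of Lemma \ref{l:commutatorformula}. First I would differentiate $\grad_{g_t} u = g_t^{-1}(du)$ in $t$. Using $\del_t(g_t^{-1}) = - g_t^{-1}(\del_t g_t) g_t^{-1}$ and the hypothesis $\square u = 0$, i.e. $\del_t u = \Delta u = \gD u$ (the last equality because the Laplacian agrees with the Levi-Civita Laplacian on functions), one obtains the invariant identity
\begin{align*}
\del_t \grad_{g_t} u = \grad_{g_t}(\gD u) - (\del_t g_t)(\grad_{g_t} u, \cdot)^{\sharp_{g_t}},
\end{align*}
which can also be verified in local coordinates from $\del_t(g^{ij}\del_j u) = - g^{ik}(\del_t g)_{kl} g^{lj}\del_j u + g^{ij}\del_j \gD u$.

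Next I would substitute this into the definition $\N_t Y = \del_t Y + \tfrac{1}{2}\del_t(g_t + b_t)(Y,\cdot)^{\sharp_{g_t}}$ with $Y = \grad_{g_t} u$. The two terms involving $\del_t g_t$ combine to $-\tfrac{1}{2}(\del_t g_t)(\grad_{g_t} u, \cdot)^{\sharp}$, and together with the $b$-term this gives
\begin{align*}
\N_t \grad_{g_t} u = \grad_{g_t}(\gD u) - \tfrac{1}{2}\,\del_t(g_t - b_t)(\grad_{g_t} u, \cdot)^{\sharp_{g_t}}.
\end{align*}

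It then remains to identify $\grad_{g_t}(\gD u)$ with $\gD \grad_{g_t} u - \Rc^{\N}(\grad_{g_t} u, \cdot)^{\sharp}$. This is precisely Lemma \ref{l:commutatorformula} after raising an index: that lemma gives $\gD(\N u) = \N(\gD u) + \Rc^{\N}(\N u, \cdot)$ as an identity of $1$-forms (with $\N u = du$ on functions), and since the spatial Bismut connection is metric, $\gD = \tr_g \N\N$ commutes with the musical isomorphism; applying $g_t^{-1}$ to both sides converts this into the desired identity for vector fields. Combining the two displays above yields the claimed formula.

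The computation is routine, and the substantive content has already been isolated in Lemma \ref{l:commutatorformula}; the only points requiring care are the bookkeeping of the musical isomorphisms and the two distinct meanings of "Laplacian" at play — $\Delta = \gD$ on functions, but $\gD$ in the statement is the rough Laplacian of the Bismut connection acting on vector fields. I do not expect a genuine obstacle beyond keeping these straight and being consistent about the sign convention $\tfrac{1}{2}\del_t(g_t+b_t)$ in the definition of $\N_t$ used here.
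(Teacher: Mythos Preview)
Your proof is correct and follows essentially the same route as the paper: expand $\N_t \grad_{g_t} u$ from the definition, use $\del_t u = \gD u$ together with $\del_t(g^{-1}) = -g^{-1}(\del_t g)g^{-1}$ to compute $\del_t \grad_{g_t} u$, combine the $\del_t g$ and $\del_t b$ terms into $-\tfrac{1}{2}\del_t(g_t-b_t)(\grad_{g_t}u,\cdot)^{\sharp}$, and then invoke Lemma \ref{l:commutatorformula} (raised via metric compatibility of $\N$) to pass from $\grad_{g_t}(\gD u)$ to $\gD\grad_{g_t} u - \Rc^{\N}(\grad_{g_t} u,\cdot)^{\sharp}$. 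The paper's proof is the same computation, written as a single three-line display.
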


\begin{proof} Using the definitions above and Lemma \ref{l:commutatorformula} we have
\begin{align*}
\N_t \grad_{g_t} u =&\ \del_t \left( g_t^{-1} d u \right) + \tfrac{1}{2} \del_t \left(g_t + b_t \right) \left(\grad_{g_t} u, \cdot \right)^{\sharp_{g_t}}\\
=&\ \grad_{g_t} \gD_{g_t} u - \del_t g_t \left( \grad_{g_t} u, \cdot \right)^{\sharp_{g_t}} + \tfrac{1}{2} \del_t \left(g_t + b_t \right) \left(\grad_{g_t} u, \cdot \right)^{\sharp_{g_t}}\\
=&\ \gD \grad_{g_t} u - \left( \Rc^{\N} +\ \tfrac{1}{2} \del_t (g_t - b_t) \right) \left( \grad_{g_t} u, \cdot \right)^{\sharp_{g_t}}.
\end{align*}
\end{proof}

\subsection{Frame bundle formalism}

We next use the spacetime connection to define an adapted geometry on the frame bundle.  We refer to \cite{KN1} for general background.  Let $(M^n, g_t, b_t)$ be a time-dependent family as above.  Let $\mathcal M = M \times I$ for some time interval $I$.  We define an $O_n$-bundle $\pi : \mathcal F \to \mathcal M$, where the fibres $\FF_{(x,t)}$ are orthogonal maps $u: \mathbb R^n \to (T_xM, g_t)$.  Given a curve $\gg_t$ in $\mathcal M$, the horizontal lift is a curve $u_t \in \FF$ such that:
\begin{align*}
\pi \circ u_t = \gg_t, \qquad \N_{\dot{\gg}} (u_t v) = 0 \quad \mbox{ for all } v \in \mathbb R^n.
\end{align*}
By general theory, it follows that if we fix a point $u_0 \in \pi^{-1} \gg_0$, there exists a unique horizontal lift $u_t$ with initial condition $u_0$.  Furthermore, given $a X + b \del_t \in T_{(x,t)} \mathcal M$, and $u \in \FF_{(x,t)}$, there exists a unique horizontal lift $a X^* + b \del^*_t$ which satisfies
\begin{align*}
\pi_* (a X^* + b \del^*_t) = a X + b \del_t.
\end{align*}
Here $X^*$ is the usual horizontal lift of $X$ with respect to $\N$, and $\del_t^*$ the lift of $\del_t$ along the path which is constant in space.

The frame bundle $\FF$ comes equipped with certain canonical vector fields as well.  First, as above we have $\del_t^*$ which is the horizontal lift of $\del_t$.  Furthermore we define horizontal vector fields $\{E_i\}_{i=1}^n$ via
\begin{align*}
E_i(u) = (u e_i)^*,
\end{align*}
where $\{e_i\}_{i=1}^n$ is the standard basis for $\mathbb R^n$.  We furthermore define vertical vector fields
\begin{align*}
V_{ij}(u) = \left. \frac{d}{ds} \right|_{s=0} (u \exp (s A_{ij})), \qquad (A_{ij})_{kl} = (\gd_{ik} \gd_{jl} - \gd_{il} \gd_{jk}).
\end{align*}
We will perform some computations below in local coordinates.  To that end, given coordinates $\{x^i, t\}$ on $\mathcal M$ we canonically associate coordinates $(x^i, t, e_j^k)$ on $\FF$, where the functions $e_j^k$ are defined by
\begin{align*}
u e_j = e_j^k \frac{\del}{\del x^k}.
\end{align*}
We furthermore recall an identification between contravariant tensor fields on $\mathcal M$ and equivariant functions on $\FF$.  Given a smooth function $f : \mathcal M \to \mathbb R$ we set
\begin{align*}
\til{f} = f \circ \pi.
\end{align*}
Furthermore, given $\ga = \ga_i dx^i \in T^* \mathcal M$, we obtain $\til{\ga} : \mathcal F \to \mathbb R^n$ via
\begin{align*}
\til{\ga}_i(u) = \ga_{\pi(u)} (u e_i).
\end{align*}
This identification extends in an obvious way to any contravariant tensor.

\begin{lemma}\label{lma: commute} With the setup above, one has:
\begin{align*}
\til{Xf} =&\ X^* \til{f}, \qquad \til{\del_t f} = \del_t^* \til{f}.
\end{align*}
\end{lemma}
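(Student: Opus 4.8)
The plan is to prove the two identities $\til{Xf} = X^*\til{f}$ and $\til{\del_t f} = \del_t^*\til{f}$ by unwinding the definitions of the canonical horizontal lifts in the coordinates $(x^i, t, e_j^k)$ introduced above, and observing that in both cases the horizontal lift differs from the ``naive'' lift only by vertical vector fields, which annihilate $\til{f} = f\circ\pi$ since it is constant along fibres.

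First I would record that for any smooth $f\colon\MM\to\R$, the function $\til f = f\circ\pi$ depends only on $(x^i,t)$ and not on the fibre coordinates $e_j^k$; hence $V_{ij}\til f = 0$ for the vertical fields, and more generally any vertical vector annihilates $\til f$. Next, recall that the horizontal lift $X^*$ of a vector $X = X^i\,\del/\del x^i \in T_x M$ with respect to the connection $\N$ is characterized by $\pi_* X^* = X$ together with horizontality, and in the associated coordinates it has the standard form $X^* = X^i\bigl(\del/\del x^i - \gG_{ik}^l e_j^k\,\del/\del e_j^l\bigr)$, where $\gG$ are the connection coefficients of $\N$ (this is exactly the expression used implicitly in the commutator computation of Lemma \ref{l:commutatorformula}). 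Applying this to $\til f$, the $\del/\del e_j^l$ terms contribute nothing since $\til f$ is independent of $e_j^k$, leaving $X^*\til f = X^i\,\del_i f\circ\pi = \til{Xf}$. The same argument applies verbatim to $\del_t^*$: by definition $\del_t^*$ is the horizontal lift of $\del_t$ along the spatially-constant path, so in coordinates $\del_t^* = \del/\del t - (\text{connection term in }t)_j^l e_j^k\,\del/\del e_j^l$, the extra term again kills $\til f$, and $\del_t^*\til f = \del_t f\circ\pi = \til{\del_t f}$.

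Alternatively, and perhaps more cleanly, I would argue coordinate-free: for any tangent vector $\xi$ to $\MM$ and any horizontal lift $\xi^*$ at a frame $u$, one has $\xi^*\til f = (d\til f)(\xi^*) = (d(f\circ\pi))(\xi^*) = (df)(\pi_*\xi^*) = (df)(\xi) = (\xi f)\circ\pi$ evaluated at $u$, which is precisely $\til{\xi f}$; here the only input is $\pi_*\xi^* = \xi$, which holds by the defining property of the horizontal lift. Taking $\xi = X$ and $\xi = \del_t$ gives the two stated identities. I would likely present this second argument as the main proof and note that it uses nothing about the torsion or the time-dependence — only that $\til f$ is a pullback under $\pi$ and that horizontal lifts project correctly.

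There is essentially no obstacle here: the statement is a formal consequence of the functoriality of pullback together with $\pi_*\circ(\text{horizontal lift}) = \Id$. The only point requiring a modicum of care is making sure the definition of $\del_t^*$ ``along the path which is constant in space'' is indeed a genuine horizontal lift in the sense that $\pi_*\del_t^* = \del_t$, which is immediate from the construction of horizontal lifts of curves given just above the lemma. Thus the proof will be two or three lines invoking $(d(f\circ\pi))(\xi^*) = (df)(\pi_*\xi^*)$.
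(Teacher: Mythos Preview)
Your proof is correct. The paper actually states this lemma without proof, presumably regarding it as an immediate consequence of the definitions, so there is no argument to compare against. Your coordinate-free version---applying the chain rule $(d(f\circ\pi))(\xi^*) = (df)(\pi_*\xi^*) = (df)(\xi)$ together with the defining property $\pi_*\xi^* = \xi$ of the horizontal lift---is exactly the right justification and would serve as a complete proof.
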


\begin{prop} Given local coordinates $\{x^i, t\}$, we can express
\begin{align*}
E_i =&\ e_i^k \left( \frac{\del}{\del x^k} - e_j^l \gG_{kl}^m \frac{\del}{\del e_j^m} \right)\\
V_{ij} =&\ e_j^k \frac{\del}{\del e_i^k} - e_i^k \frac{\del}{\del e_j^k}\\
\del_t^* =&\ \del_t - \tfrac{1}{2} \left( \til{\del_t g_{ik}} - \til{\del_t b}_{ik} \right) e_k^j \frac{\del}{\del e_i^j}.
\end{align*}
\begin{proof} The first two items are already shown in \cite{HamHarnack}.  To show the final item we first fix a frame $u_0 \in \FF$, set $\pi(u_0) = (x_0, t_0)$, then define a curve $\gg(t) = (x_0, t_0 + t)$, and let $u_t$ denote the horizontal lift of $\gg_t$.  Recall that by construction, the vector field $u_t e_i$ is parallel, thus we can compute using the definition of the spacetime connection,
\begin{align*}
0 =&\ \N_{\dt} (u_t e_i) = \N_{\dt} \left( e_i^j \frac{\del}{\del x^j} \right)\\
=&\ \frac{d}{dt} e_i^j \frac{\del}{\del x^j} + e_i^j \N_{\dt} \frac{\del}{\del x^j}\\
=&\ \left( \frac{d}{dt} e_i^j  + \tfrac{1}{2} \left( \til{\del_t g}_{ik} - \til{\del_t b}_{ik} \right) e_k^j \right) \frac{\del}{\del x^j}.
\end{align*}
It follows that
\begin{align*}
\left. \del_t^* \right|_{u_0} =&\ \left. \frac{d}{dt} \right|_{t=0} \left( x_0, t_0 + t, e_i^j(t) \right)\\
=&\ \del_t + \left. \frac{d}{dt} \right|_{t=0} e_i^j(t) \frac{\del}{\del e_i^j}\\
=&\ \del_t - \tfrac{1}{2} \left( \til{\del_t g}_{ik} - \til{\del_t b}_{ik} \right) e_k^j \frac{\del}{\del e_i^j},
\end{align*}
as claimed.
\end{proof}
\end{prop}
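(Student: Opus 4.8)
The plan is to obtain each of the three vector fields by computing its action on the canonical coordinate functions $(x^i,t,e_j^k)$ on $\FF$, where $e_j^k$ is characterized by $ue_j = e_j^k \frac{\del}{\del x^k}$. The expressions for $E_i$ and $V_{ij}$ depend only on the spatial geometry and reduce to the classical frame-bundle computations used by Hamilton \cite{HamHarnack} (see also \cite{KN1}), so I would only recall them; the genuinely new content is the formula for $\del_t^*$, into which the two-form term of the spacetime connection enters.

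For $E_i = (ue_i)^*$: I would first recall that on the frame bundle of a connection with coefficients $\gG_{kl}^m$ the horizontal lift of a coordinate vector field is $\bigl(\frac{\del}{\del x^k}\bigr)^* = \frac{\del}{\del x^k} - e_j^l \gG_{kl}^m \frac{\del}{\del e_j^m}$, which is immediate from writing the parallel transport ODE in the coordinates $e_j^m$. Since $ue_i = e_i^k \frac{\del}{\del x^k}$ and horizontal lifting is linear over functions in the base vector, $E_i = e_i^k \bigl(\frac{\del}{\del x^k}\bigr)^*$, which is the first formula. For $V_{ij}$: I would differentiate $s \mapsto u\exp(sA_{ij})$ at $s=0$; from $\exp(sA_{ij})e_l = e_l + sA_{ij}e_l + O(s^2)$ and the stated entries of $A_{ij}$, only the coordinates $e_i^k$ and $e_j^k$ move to first order, and reading off $\frac{d}{ds}\big|_{0}$ in the $\frac{\del}{\del e_\bullet^\bullet}$-frame gives the second formula.

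For $\del_t^*$, the key point is that at a frame $u_0$ over $(x_0,t_0)$ the vector $\del_t^*|_{u_0}$ is, by definition, $\frac{d}{dt}\big|_{t=0}u_t$, where $u_t$ is the horizontal lift with initial value $u_0$ of the spatially constant curve $\gg(t) = (x_0,t_0+t)$. Since $u_t$ lies in the fibres over $\gg$, in coordinates $u_t = (x_0, t_0+t, e_i^j(t))$, hence $\del_t^*|_{u_0} = \del_t + \dot e_i^j(0)\frac{\del}{\del e_i^j}$, and everything reduces to computing $\dot e_i^j(0)$. The defining property of the lift is that each section $u_t e_i$ is $\N$-parallel along $\gg$, i.e. $\N_{\del_t}(u_t e_i) = 0$. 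Writing $u_t e_i = e_i^j(t)\frac{\del}{\del x^j}$, using that $\del_t \frac{\del}{\del x^j} = 0$, and inserting the definition of the spacetime connection $\N_{\del_t} Y = \del_t Y + \tfrac12\del_t(g_t - b_t)(Y,\cdot)^{\sharp_{g_t}}$, this collapses to the linear ODE $\dot e_i^j = -\tfrac12 e_i^l g^{jk}\del_t(g_{lk} - b_{lk})$. Finally I would rewrite the right-hand side using the orthogonality of the frame, $g_{pq}e_i^p e_j^q = \gd_{ij}$, equivalently $\sum_k e_k^p e_k^q = g^{pq}$, in terms of the equivariant functions $\til{\del_t g}_{ik}(u) = \del_t g(ue_i,ue_k)$ and $\til{\del_t b}_{ik}(u) = \del_t b(ue_i,ue_k)$, obtaining $\dot e_i^j = -\tfrac12(\til{\del_t g}_{ik} - \til{\del_t b}_{ik})e_k^j$; evaluating at $t=0$ gives the third formula.

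There is no substantial obstacle here — it is a bookkeeping computation. The one point that needs care is keeping track of the two kinds of indices, the \emph{internal} (frame) index and the \emph{external} (coordinate) index on $e_i^j$, and correspondingly getting the placement and sign of $\til{\del_t b}$ right when passing from $\N_{\del_t}\frac{\del}{\del x^j}$ to the equivariant form. It is also worth noting explicitly that the spatial connection coefficients $\gG$ do not enter $\del_t^*$, precisely because the curve $\gg$ being lifted is constant in space, so that only the zeroth-order (in $\N$) correction term of the spacetime connection contributes.
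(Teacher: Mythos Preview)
Your proposal is correct and follows essentially the same approach as the paper: the formulas for $E_i$ and $V_{ij}$ are deferred to Hamilton's classical computation, and $\del_t^*$ is obtained by horizontally lifting the spatially constant curve $\gamma(t)=(x_0,t_0+t)$, writing out the parallel-transport ODE $\N_{\del_t}(u_te_i)=0$ for the frame components $e_i^j(t)$, and reading off $\dot e_i^j(0)$. Your additional remarks (the orthonormality identity $\sum_k e_k^p e_k^q = g^{pq}$ used to pass to the equivariant form, and the observation that the spatial $\gG$'s do not appear because the curve is constant in space) are helpful elaborations but do not change the structure of the argument.
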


\begin{prop}\label{prop: commute} With the setup above, for a contravariant tensor $T$ one has
\begin{align*}
\til{\N_X T} =&\ X^* \til{T}, \qquad \til{\N_t T} = \del_t^* \til{T}\\
\N^2 T(u e_i, u e_j) =&\ E_i E_j \til{T}, \qquad \til{\gD T} = \sum_{i=1}^n E_i E_i \til{T}.
\end{align*}
\begin{proof} We prove these identities for $T = \ga \in T^*$, as the general case is analogous.  Fix a vector field $X$, local coordinates $\{x^i, t\}$, and express $X = X^i \frac{\del}{\del x^i}$.  To simplify notation we let $\gG$ denote the Christoffel symbols of $\N $.  We first express
\begin{align*}
\left( \N_X \ga \right)_i = X^j \left( \N_{\frac{\del}{\del x^j}} \ga \right)_i = X^j \left( \frac{\del \ga_i}{\del x^j} - \gG_{ji}^k \ga_k \right).
\end{align*}
On the other hand we can express
\begin{align*}
\left( \frac{\del}{\del x^i} \right)^* = \frac{\del}{\del x^i} - \gG_{i l}^k e_j^l \frac{\del}{\del e_j^k} 
\end{align*}
It follows that
\begin{align*}
X^* \til{\ga}_i =&\ X^m \left( \frac{\del}{\del x^m} - \gG_{m l}^k e_j^l \frac{\del}{\del e_j^k} \right) \left( \ga_k e_i^k \right)\\
=&\ X^m \left( \frac{\del \ga_k}{\del x^m} e_i^k - \gG_{ml}^k \ga_k e_i^l \right)\\
=&\ X^m \left( \frac{\del \ga_k}{\del x^m} - \gG_{mk}^l \ga_l\right) e_i^k\\
=&\ \left( \til{\N_X \ga} \right)_i,
\end{align*}
as claimed.  For the third claim we first compute
\begin{align*}
E_i E_j \til{\ga}_k =&\ \left( e_i^s \left( \frac{\del}{\del x^s} - e_a^t \gG_{st}^m \frac{\del}{\del e_a^m} \right) \right) \left( e_j^p \left( \frac{\del}{\del x^p} - e_b^d \gG_{pd}^q \frac{\del}{\del e_b^q} \right) \right) \left( e_k^r \ga_r \right)\\
=&\ \left( e_i^s \left( \frac{\del}{\del x^s} - e_a^t \gG_{st}^m \frac{\del}{\del e_a^m} \right) \right) \left( e_j^p \left( e_k^r \frac{\del \ga_r}{\del x^p} -e_k^d \gG_{pd}^r \ga_r \right) \right)\\
=&\ e_i^s \left( e_j^p\left( e_k^r \frac{\del^2 \ga_r}{\del x^p \del x^s} - e_k^d \frac{\del \gG_{pd}^r}{\del x^s} \ga_r - e_k^d \gG_{pd}^r \frac{\del \ga_r}{\del x^s} \right)  \right)\\
&\ - e_i^s e_j^t \gG_{st}^p \left( e_k^r \frac{\del \ga_r}{\del x^p} - e_k^l \gG_{pl}^r \ga_r \right) - e_i^s e_k^t \gG_{st}^r e_j^p \frac{\del \ga_r}{\del x^p} + e_i^s e_k^t \gG_{st}^d e_j^p \gG_{pd}^r \ga_r
\end{align*}
On the other hand we have
\begin{align*}
\left( \N^2 \ga \right)_{ijk} =&\ \del_i (\N \ga)_{jk} - \gG_{ij}^l (\N \ga)_{lk} - \gG_{ik}^l (\N \ga)_{jl}\\
=&\ \del_i \left( \del_j \ga_k - \gG_{jk}^p \ga_p \right) - \gG_{ij}^l \left( \del_l \ga_k - \gG_{lk}^p \ga_p \right) - \gG_{ik}^l \left(\del_j \ga_l - \gG_{jl}^p \ga_p \right)\\
=&\ \ga_{k,ij} - \gG_{jk,i}^p \ga_p - \gG_{jk}^p \ga_{p,i} - \gG_{ij}^l \ga_{k,l} + \gG_{ij}^l \gG_{lk}^p \ga_p - \gG_{ik}^l \ga_{l,j} + \gG_{ik}^l \gG_{jl}^p \ga_p.
\end{align*}
Comparing these two formulas gives the result.  The final claim follows by tracing the third formula over an orthonormal basis.
\end{proof}
\end{prop}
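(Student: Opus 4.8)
\emph{Plan.} The plan is to reduce everything to the case $T=\ga\in T^*\mathcal M$ and then verify each of the four identities in local coordinates by direct substitution of the coordinate formulas for the canonical vector fields on $\mathcal F$. Reduction to a $1$-form is legitimate because the identification $T\mapsto\til T$ is defined slot-by-slot (for a decomposable covariant tensor, $\til T_{i_1\cdots i_k}(u)=\prod_m \til{\ga^{(m)}}_{i_m}(u)$) and each of $\N_X$, $\N_t$, $\N^2$, $\gD$ is tensorial and obeys the Leibniz rule; the general case is then the $1$-form computation repeated in each slot with the remaining indices carried along passively. So fix coordinates $\{x^i,t\}$ on $\mathcal M$, the associated coordinates $(x^i,t,e^k_j)$ on $\mathcal F$, write $\gG$ for the Christoffel symbols of the spacetime connection $\N$, and recall $\til\ga_i(u)=\ga_k e^k_i$.

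For the first identity, write $X=X^i\,\del/\del x^i$ and substitute $(\del/\del x^i)^*=\del/\del x^i-\gG^k_{il}e^l_j\,\del/\del e^k_j$. Applying $X^*$ to $\til\ga_i=\ga_k e^k_i$, the $\del/\del x$-part gives $X^m(\del\ga_k/\del x^m)e^k_i$ and the vertical part gives $-X^m\gG^k_{ml}\ga_k e^l_i$; after relabelling the dummy indices this is exactly $X^m(\del_m\ga_k-\gG^l_{mk}\ga_l)e^k_i=(\til{\N_X\ga})_i$. This is the tensorial generalization of Lemma \ref{lma: commute}. The second identity is the same computation with the horizontal lift of $\del_t$ in place of that of $\del/\del x^i$: substitute $\del^*_t=\del_t-\tfrac12(\til{\del_t g}_{ik}-\til{\del_t b}_{ik})e^j_k\,\del/\del e^j_i$. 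The only point requiring care is that, because the $\til{(\cdot)}$ identification pairs $\ga$ against the frame, the skew part enters the vertical coefficient with a transpose, so the $b$-term appears with the opposite sign to the $g+b$ in the raw definition of $\N_t$; once this is accounted for, the same cancellation as before produces $(\til{\N_t\ga})_i$.

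For the third identity I would expand $E_iE_j\til\ga_k$ directly using $E_i=e^s_i\big(\del/\del x^s-e^t_a\gG^m_{st}\,\del/\del e^m_a\big)$, collecting the terms that arise: second $x$-derivatives of $\ga$, one derivative of a Christoffel symbol, products of two Christoffel symbols, and — crucially — the term in which the outer $E_i$ differentiates the frame coefficient $e^p_j$ sitting as a coefficient inside the inner $E_j$, since that is what produces the connection-on-the-base correction. Comparing term by term with the coordinate expansion $(\N^2\ga)_{ijk}=\del_i(\N\ga)_{jk}-\gG^l_{ij}(\N\ga)_{lk}-\gG^l_{ik}(\N\ga)_{jl}$ evaluated against the frame vectors $ue_i,ue_j$ (which merely contracts the free slots with the appropriate $e$'s) gives the claim; note that neither side is symmetric in $i,j$, and the antisymmetric part of both is the curvature (and torsion) of $\N$, so this is internally consistent. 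Finally, the fourth identity is immediate: since $u$ is an orthogonal map, $\{ue_i\}_{i=1}^n$ is a $g_t$-orthonormal frame, so $\gD T=\tr_g\N^2 T=\sum_i\N^2 T(ue_i,ue_i)$, and applying $\til{(\cdot)}$ together with the third identity gives $\til{\gD T}=\sum_i E_iE_i\til T$.

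The routine but genuinely tedious step — and the only real obstacle — is the bookkeeping in the third identity: one must track every term produced by the nested horizontal vector fields, in particular the cross term where the outer field hits the frame coefficient of the inner field and the terms where the vertical pieces $\del/\del e^m_a$ act on the various $e$-factors, and then see that these reassemble precisely into the three-term coordinate expression for the second covariant derivative. Everything else is a one-line substitution of the coordinate formulas for $E_i$, $\del^*_t$, and the $\N$-horizontal lift established above.
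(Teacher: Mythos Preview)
Your plan is correct and follows essentially the same approach as the paper: reduce to a $1$-form, verify the first and third identities by direct coordinate computation using the explicit formulas for the horizontal lifts $(\del/\del x^i)^*$ and $E_i$, and obtain the fourth by tracing. You additionally sketch the second identity (which the paper's proof omits entirely) and correctly flag the transpose/sign point arising when the time-direction endomorphism is passed from vector fields to $1$-forms; otherwise the two arguments are identical in structure and content.
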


\section{A Bochner formula on path space} \label{s:Bochnerpath}
\subsection{Brownian motion and Ito's Lemma}

 Let $(M^n,g_t,b_t)_{t\in [0,T]}$ be a time-dependent family with spacetime connection as above.  In what follows we give the Eels-Elworthy-Malliavin construction of Brownian motion adapted to our setting.  This is a further generalization of the construction of \cite{ACT08, HaslhoferNaber}.  Let $(x,T')\in\mathcal M$. It will be convenient to work with the backward time $\tau:=T'-t$ and the convention that $\partial^*_\tau=-\partial^*_t$. Let us start with a smooth curve $\gamma_\tau=(x_\tau,T'-\tau)$ in $\mathcal M$ with $x_0=x$ and denote by $u_\tau$ its horizontal lift. The anti-development $(w_\tau)_\tau\subset \R^n$ is the given as the solution of the ordinary differential equation
\begin{align*}
\dot u_\tau=\partial^*_\tau+E_i(u_\tau)\dot w_\tau^i, \qquad w_0 = 0,
\end{align*}
which exists along $\gg$ by general theory.  This equivalent formulation of parallel transport motivates the following stochastic differential equation
\begin{equation} \begin{aligned}\label{eq: anti}
d U_\tau=&\ \partial^*_\tau\, d \tau + E_i(U_\tau)\circ\, dW_\tau^i,\\
U_0=&\ u.
\end{aligned}\end{equation}
Here, $(W_\tau)$ is a Brownian motion on $\R^n$ and $u$ is an initial frame at $(x,T')$. We use the convention that $(W_\tau)$ has $\Delta^{\R^n}$ as generator instead of $\frac12\Delta^{\R^n}$, i.e. the covariation satisfies $dW_\tau^idW_\tau^j=2\delta_{ij}d\tau$ and $\circ$ refers to the Stratonovich integration.  In this section we establish existence and uniqueness of \eqref{eq: anti} as well as a version of Ito's lemma.

\begin{prop}
	The stochastic differential equation \eqref{eq: anti} has a unique continuous solution $(U_\tau)_{\tau\in[0,T']}$ and satisfies $\pi_2(U_\tau)=T'-\tau$. Furthermore, given any $C^2$ function $\til f:\FF\to\R$ we have
	\begin{equation}\begin{aligned}\label{eq: Ito}
&d\til f(U_\tau)=E_i\til f(U_\tau)\, dW^i_\tau + \partial^*_\tau\til f(U_\tau)\, d\tau+E_iE_i\til f(U_\tau)\, d\tau.
\end{aligned}\end{equation}
\end{prop}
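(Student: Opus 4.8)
The plan is to carry out the standard Eells--Elworthy--Malliavin construction, transplanted to the spacetime frame bundle $\mathcal F$ with its twisted connection. First I would observe that, by the coordinate formulas of the preceding Proposition, the vector fields $\del_t^*$ and $E_1, \dots, E_n$ are smooth on $\mathcal F$; since a Stratonovich SDE driven by smooth vector fields has locally Lipschitz coefficients in any chart, classical SDE theory produces a unique continuous solution $(U_\tau)$ up to a maximal explosion time $\zeta$, and it remains only to show $\zeta > T'$ almost surely. Because $\mathcal F \to \mathcal M$ is a principal $O_n$-bundle, so in particular has compact fibres, the process $U_\tau$ leaves every compact subset of $\mathcal F$ only if its projection $\pi(U_\tau) = (x_\tau, T' - \tau)$ leaves every compact subset of $\mathcal M$. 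The time component is deterministic (see below), so non-explosion reduces to non-explosion of the spatial part $x_\tau$, which by construction is a Brownian motion for the time-dependent Laplacian $\gD_{g_{T'-\tau}}$; under the standing hypothesis that each time slice is complete with bounded geometry, this process is non-explosive on $[0,T']$ (for instance via Gaussian heat kernel bounds / the usual stochastic completeness criterion). This gives existence and uniqueness of $(U_\tau)_{\tau\in[0,T']}$.

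For the identity $\pi_2(U_\tau) = T' - \tau$, I would apply the Stratonovich chain rule to the function $\til t := \pi_2 \circ \pi$ on $\mathcal F$. Since each $E_i(u) = (ue_i)^*$ is the horizontal lift of a vector tangent to the $M$-factor, we have $E_i \til t \equiv 0$, while from the coordinate expression $\del_t^* = \del_t - \tfrac12(\til{\del_t g} - \til{\del_t b})_{ik} e_k^j \frac{\del}{\del e_i^j}$ we get $\del_t^* \til t \equiv 1$, hence $\del_\tau^* \til t \equiv -1$ by the convention $\del_\tau^* = -\del_t^*$. Therefore $d(\pi_2(U_\tau)) = - d\tau$, and since $\pi_2(U_0) = T'$ we conclude $\pi_2(U_\tau) = T' - \tau$; note the diffusion part vanishes here, so no Itô correction enters.

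Finally, for Itô's formula I would start from the Stratonovich chain rule
\[
d\til f(U_\tau) = \del_\tau^* \til f(U_\tau)\, d\tau + E_i \til f(U_\tau) \circ dW_\tau^i
\]
and convert the Stratonovich differential into Itô form. The correction term is $\tfrac12\, d[\,E_i\til f(U_\cdot),\, W^i\,]_\tau$; using that the martingale part of $d(E_i\til f(U_\tau))$ is $E_jE_i\til f(U_\tau)\, dW_\tau^j$, together with the convention $dW_\tau^i dW_\tau^j = 2\delta_{ij}\, d\tau$, this equals $\tfrac12 \cdot 2\, E_iE_i\til f(U_\tau)\, d\tau = E_iE_i\til f(U_\tau)\, d\tau$, which yields \eqref{eq: Ito}. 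The genuinely substantive step is the non-explosion estimate, and I expect that to be the main obstacle; the twisting by $\del_t b$ only alters the vertical part of $\del_t^*$, so it affects neither the base projection nor the time component, and the rest of the argument is the routine passage between Stratonovich and Itô calculus on the frame bundle, as in \cite{ACT08, HaslhoferNaber}.
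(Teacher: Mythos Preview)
Your proposal is correct and follows the same Eells--Elworthy--Malliavin route as the paper, with two organizational differences worth noting. For existence and non-explosion, the paper embeds $\FF$ into some $\R^N$ and appeals directly to global boundedness of the coefficient vector fields $E_i$ and $\del_\tau^*$ under the standing bounded-geometry hypothesis, invoking standard Euclidean SDE theory from \cite{Hsu}; you instead argue intrinsically, using compactness of the $O_n$-fibres to reduce non-explosion on $\FF$ to stochastic completeness of the projected time-dependent Brownian motion on $M$. Both are standard; the embedding route is quicker and sidesteps the mild circularity of needing to identify the projected generator before the It\^o formula is in hand, while your route is more geometric and isolates non-explosion as the only analytic input, as you correctly anticipate. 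For the It\^o formula itself, the paper first converts the SDE \eqref{eq: anti} to It\^o form in the ambient $\R^N$ coordinates and then applies Euclidean It\^o, whereas you apply the Stratonovich chain rule to $\til f$ and convert afterwards; the computations are equivalent. Finally, the paper obtains $\pi_2(U_\tau)=T'-\tau$ \emph{after} proving \eqref{eq: Ito}, as the special case $\til f=\pi_2$, rather than verifying it separately beforehand as you do.
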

\begin{proof}
	We adapt the corresponding argument from \cite{HaslhoferNaber}.  First, we may embed the manifold $\FF$ into $\R^N$ for some $N$. Then $U_\tau$ satisfies \eqref{eq: anti} if and only if the coordinate functions $U_\tau^a$ satisfy 
	\begin{align*}
	d U_\tau^a=&\ (\partial_\tau^*)^a\, d \tau + E_i^a(U_\tau)\circ\, dW_\tau^i
	\end{align*}
	for all $a=1,\ldots,N$, see \cite[Prop 1.2.7]{Hsu}. Since each vector field $E_i$ is smooth and bounded since each time slice has bounded geometry, it follows from the standard theory for SDEs on Euclidean space that there is a unique solution on $[0,T']$, cf. \cite[Theorem 1.1.8]{Hsu} and that this solution actually stays in $\FF$, see \cite[Theorem 1.2.9]{Hsu}.
	
	In order to show \eqref{eq: Ito} we convert the Stratonovich integral in \eqref{eq: anti} into an Ito integral by dropping the $\circ$ and adding one half times the covariation of $E_i(U_\tau)$ and $W_\tau$:
	\begin{align*}
	d U_\tau^a=&\ (\partial_\tau^*)^a\, d\tau + E_i^a(U_\tau)\, dW_\tau^i + \frac12\, dE_i^a(U_\tau)\, dW_\tau^i.
	\end{align*}
	For the covariation term we compute, using Ito's lemma in Euclidean space, 
	\begin{align*}
	dE_i^a(U_\tau)\, dW_\tau^i= \frac{\partial}{\partial x^b}E_i^a(U_\tau)\, dU_\tau^b\, dW_\tau^i
	= 2\frac{\partial}{\partial x^b}E_i^a(U_\tau)E_i^b(U_\tau)\, d\tau.
	\end{align*}
	Here, we also used the fact that the covariation of a continuous process and a process of finite variation vanishes.
	Now, let $\til f\colon\FF\to\R$ be a $C^2$ function. Then, by Ito's lemma in Euclidean space,
	\begin{align*}
	d\til f(U_\tau)=&\ \frac{\partial}{\partial x^a}\til f(U_\tau)\, dU_\tau^a+\frac12\frac{\partial^2}{\partial x^a\partial x^b}\til f(U_\tau)\, dU_\tau^a\, dU_\tau^b\\
	=&\ \frac{\partial}{\partial x^a}\til f(U_\tau)(\partial_\tau^*)^a\, d \tau
	+\frac{\partial}{\partial x^a}\til f(U_\tau)E_i^a(U_\tau)\, dW_\tau^i
	+\frac{\partial}{\partial x^a}\til f(U_\tau)\frac{\partial}{\partial x^b}E_i^a(U_\tau)E_i^b(U_\tau)\, d\tau\\
	&\ +\frac{\partial^2}{\partial x^a\partial x^b}\til f(U_\tau)E_i^a(U_\tau)E_i^b(U_\tau)\, d\tau.
	\end{align*}
	Finally, since
	\begin{align*}
	&\frac{\partial}{\partial x^a}\til f(U_\tau)(\partial_\tau^*)^a=\partial^*_\tau \til f(U_\tau)\\
	&\frac{\partial}{\partial x^a}\til f(U_\tau)E_i^a(U_\tau)=E_i(U_\tau)\til f(U_\tau)\\
	&\frac{\partial}{\partial x^a}\til f(U_\tau)\frac{\partial}{\partial x^b}E_i^a(U_\tau)E_i^b(U_\tau)
	+\frac{\partial^2}{\partial x^a\partial x^b}\til f(U_\tau)E_i^a(U_\tau)E_i^b(U_\tau)
	=E_i(U_\tau)E_i(U_\tau)\til f(U_\tau),
	\end{align*}
	we obtain \eqref{eq: Ito}:
	\begin{align*}
	d\til f(U_\tau)=\partial_\tau^* \til f(U_\tau)\, d\tau+E_i(U_\tau)\til f(U_\tau)\, dW_\tau^i+
	E_i(U_\tau)E_i(U_\tau)\til f(U_\tau)\, d\tau.
	\end{align*}
	Lastly, note that with the choice $\til f=\pi_2$ we get $d\til f(U_\tau)=-\, d\tau$. Furthermore setting $\pi_2(U_0)=T'$ we get $\pi_2(U_\tau)=T'-\tau$.
\end{proof}

Let $P_0\R^n$ denote the Euclidean path space based at the origin, i.e. the space of all continuous curves $\{w_\tau|w_0=0\}_{\tau\in[0,T']}\subset\R^n$. We denote by $\Gamma_0$ the Wiener measure on $P_0\R^n$. The path space has a canonical filtration $\Sigma_\tau^{\R^n}$ generated by the evaluation maps $\{e_\sigma\colon P_0\R^n\to \R^n|e_\sigma(w)=w_\sigma,\sigma\leq\tau\}$. With the help of \eqref{eq: anti} we can transfer the notion of Wiener measure to the path space over $\FF$ and $\mathcal M$.
\begin{defn}
 Let $P_u\FF$ and $P_{(x,T')}\mathcal M$ be the space of continuous curves, $\{u_\tau|u_0=u,\pi_2(u_\tau)=T'-\tau\}_{\tau\in[0,T']}\subset\FF$ and $\{\gamma_\tau=(x_\tau,T'-\tau)|\gamma_0=(x,T')\}_{\tau\in[0,T']}$ respectively.	
\end{defn}

It will be convenient from time to time to work with the \emph{total path space} $P_{T'}\MM=\bigcup_{x\in M}P_{(x,T')}\MM$.

 \begin{defn}
 	Let $U\colon P_0\R^n\to P_u\FF$ solve \eqref{eq: anti} and let $\Pi\colon P_u\FF\to P_{(x,T')}\mathcal M$ defined by $\Pi(U)_\tau=\pi(U_\tau)$. 
 	\begin{enumerate}
 		\item  We call $\Gamma_u:=U_*(\Gamma_0)$ and $\Gamma_{(x,T')}:=\Pi_*\Gamma_u$ the \emph{Wiener measures} of horizontal Brownian motion on $\FF$ and Brownian motion on space-time $\mathcal M$ respectively.
 		\item The filtrations on $P_u\FF$ and $P_{(x,T')}\mathcal M$ are given by $\Sigma_\tau^{\mathcal M}:=(\Pi\circ U)_*\Sigma_\tau^{\R^n}$ and $\Sigma_\tau^{\FF}:=U_*\Sigma_\tau^{\R^n}$.
 		\item We call $\pi(U_\tau)=(X_\tau,T'-\tau)$ \emph{Brownian motion} on $\mathcal M$ based at $\pi(u)=(x,T')$. 
\item We call the family of isometries $\{S_\tau:=U_0U_\tau^{-1}\colon (T_{X_\tau}M,g_{T'-\tau})\to (T_xM,g_{T'})\}$ \emph{stochastic parallel transport} along the Brownian curve $X_\tau$.
 	\end{enumerate}
 	\end{defn}

\begin{prop}
 Let $w$ on $\mathcal M$ be a solution to the heat equation
 \begin{align*}
\square w = 0, \quad w|_s=f,
 \end{align*}
 where $f\in C^\infty(M)$ and $s\in[0,T']$. Then
\begin{align*}
w(x,T)=\mathbb E_{(x,T')}[f(X_{T'-s})].
\end{align*}
\end{prop}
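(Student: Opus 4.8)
The plan is to obtain the identity from the Itô formula \eqref{eq: Ito} by the standard Feynman--Kac argument: one applies \eqref{eq: Ito} to the equivariant lift $\til w := w \circ \pi \colon \FF \to \R$ along the solution $U_\tau$ of the anti-development equation \eqref{eq: anti}, and observes that the resulting drift term is $-\til{\square w}$, which vanishes by hypothesis. This shows $\til w(U_\tau)$ is a (local) martingale, and taking expectations over the Wiener measure yields the formula.

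Concretely, I would first note that $w$ is smooth on $M \times [s,T']$ by parabolic regularity, so $\til w$ is $C^2$ and \eqref{eq: Ito} applies, giving
\begin{align*}
d \til w(U_\tau) = E_i \til w(U_\tau)\, dW_\tau^i + \bigl( \partial_\tau^* \til w(U_\tau) + E_i E_i \til w(U_\tau) \bigr)\, d\tau .
\end{align*}
Lemma~\ref{lma: commute} gives $\partial_t^* \til w = \til{\partial_t w}$, and since $\tau = T' - t$ with the convention $\partial_\tau^* = - \partial_t^*$ this becomes $\partial_\tau^* \til w = - \til{\partial_t w}$; Proposition~\ref{prop: commute} gives $\sum_{i=1}^n E_i E_i \til w = \til{\gD w}$, with $\gD$ acting on functions being the ordinary Laplacian. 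Hence the drift equals $\til{(-\partial_t + \gD)w} = - \til{\square w} \equiv 0$, so $\tau \mapsto \til w(U_\tau)$ is a continuous local martingale on $[0, T'-s]$. Once one knows it is a genuine martingale, taking expectations over $\Gamma_0$ gives $\mathbb E[\til w(U_{T'-s})] = \til w(u) = w(x,T')$; and since $\pi_2(U_{T'-s}) = s$ and $w|_s = f$, we have $\til w(U_{T'-s}) = f(X_{T'-s})$, so by the definition of $\Gamma_{(x,T')}$ as the pushforward of $\Gamma_0$ the right-hand side is exactly $\mathbb E_{(x,T')}[f(X_{T'-s})]$, which proves the claim (with the evident typo $w(x,T)$ for $w(x,T')$ in the statement).

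The main obstacle is the upgrade from local to genuine martingale, i.e.\ an integrability estimate. I would localize by the exit times $\tau_k$ of the Brownian curve $X_\tau$ from a relatively compact exhaustion of $M$: on $[0,\tau_k]$ the process $\til w(U_\tau)$ is bounded, hence a true martingale, so $\mathbb E[\til w(U_{(T'-s)\wedge\tau_k})] = \til w(u)$. Since each time slice is complete with bounded geometry, Brownian motion on $\mathcal M$ does not explode in finite time, so $\tau_k \uparrow \infty$ and $(T'-s)\wedge \tau_k \uparrow T'-s$ almost surely; passing to the limit then only requires uniform integrability of $\{\til w(U_{(T'-s)\wedge\tau_k})\}_k$. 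When $f$ is bounded this is immediate, since then $w$ is bounded on $M\times[s,T']$ by the maximum principle; for general smooth $f$ of suitably controlled growth one instead invokes Gaussian-type upper bounds for the conjugate heat kernel available in the bounded-geometry setting to dominate $\til w(U_{(T'-s)\wedge\tau_k})$ by an integrable random variable, or reduces to the bounded case by approximation and dominated convergence.
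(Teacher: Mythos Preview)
Your argument is correct and follows the same route as the paper: apply the It\^o formula \eqref{eq: Ito} to the lift $\til w = w\circ\pi$, use Lemma~\ref{lma: commute} and Proposition~\ref{prop: commute} to identify the drift as $-\til{\square w}=0$, integrate, and take expectations using $\til w(U_0)=w(x,T')$ and $\til w(U_{T'-s})=f(X_{T'-s})$. The paper simply asserts that the It\^o integral is a martingale without further comment, whereas you additionally address the local-to-true martingale upgrade via stopping times and the bounded-geometry hypothesis; this extra care is warranted but does not change the strategy.
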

\begin{proof}
 We consider the lift $\til w(U_\tau)$ and obtain by \eqref{eq: Ito}
 \begin{align*}
 d\til w(U_\tau)=E_i\til w(U_\tau)\, dW_\tau^i+\partial^*_\tau\til w(U_\tau)\, d\tau+E_iE_i\til w(U_\tau)\, d\tau.
 \end{align*}
 Since $w$ solves the heat equation,  by virtue of Lemma \ref{lma: commute} and Proposition \ref{prop: commute} the last two terms vanish. Integrating on $(0,T'-s)$ we get
 \begin{align}\label{eq: heatrep}
 \til w(U_{T'-s})-\til w(U_{0})=\int_0^{T'-s} E_i\til w(U_\tau)\, dW^i_\tau.
 \end{align}
 Taking expectations, and since the Ito integral of an adapted process is a martingale, we have
 \begin{align*}
 \mathbb E_{(x,T')}[f(X_{T'-s})]-w(x,T')=\mathbb E[\til w(U_{T'-s})-\til w(U_{0})]=0.
 \end{align*}
Here we used that $\til w(U_0)=w(x,T')$ and $\til w(U_{T'-s})=w(X_{T'-s},s)=f(X_{T'-s})$.
\end{proof}

A further corollary is that the Wiener measure can be characterized by the heat kernels.
 \begin{cor}\label{cor: kernel}
 Let $0\leq\tau_1<\tau_2<\ldots<\tau_k\leq T'$ be a partition and $A_1,\ldots,A_k\subset M^n$ Borel sets. Then it holds
 \begin{align*}
 \mathbb P& [X_{\tau_j}\in A_j, j=1,\ldots,k]\\
&\ \qquad = \int_{A_k}\ldots\int_{A_1}p_{T',T'-\tau_1}(x,y_1)\cdots p_{T'-\tau_{k-1},T'-\tau_k}(y_{k-1},y_k)\, dV_{g(T'-\tau_1)}(y_1)\cdots \, dV_{g(T'-\tau_k)}(y_k).
 \end{align*}
\end{cor}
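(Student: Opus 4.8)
The plan is to identify the law of the tuple $(X_{\tau_1}, \ldots, X_{\tau_k})$ under $\mathbb P = \Gamma_{(x,T')}$ with the measure obtained by composing heat kernel transition densities. The starting point is the heat representation formula established in the preceding proposition: for any $f \in C^\infty(M)$ and $s \in [0,T']$, the solution $w$ of $\square w = 0$ with $w|_s = f$ satisfies $w(x,T') = \mathbb E_{(x,T')}[f(X_{T'-s})]$. Unwinding the definition of $w$ via the heat flow operator $P_{T',s}$ from Section 2, this says precisely that the law of $X_{T'-\tau_1}$ has density $p_{T',T'-\tau_1}(x,\cdot)$ with respect to $dV_{g(T'-\tau_1)}$, which is the $k=1$ case of the claim (after relabeling $s = T'-\tau_1$, so $X_{\tau_1}$ has the asserted law). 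The essential input for the general case is the Markov property of the spacetime Brownian motion $(U_\tau)$, which holds because $(U_\tau)$ is the strong solution of the autonomous (in $\tau$) Stratonovich SDE \eqref{eq: anti} driven by a Brownian motion: solutions started afresh from $U_{\tau_1}$ at time $\tau_1$ are, conditionally on $\Sigma_{\tau_1}^{\FF}$, distributed as the solution started from that frame, by uniqueness in the preceding proposition.

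The argument then proceeds by induction on $k$. Having fixed $0 \leq \tau_1 < \cdots < \tau_k \leq T'$ and Borel sets $A_1,\ldots,A_k$, I would write
\begin{align*}
\mathbb P[X_{\tau_j} \in A_j,\ j=1,\ldots,k] = \mathbb E_{(x,T')}\left[ \mathbf{1}_{A_1}(X_{\tau_1}) \cdots \mathbf{1}_{A_{k-1}}(X_{\tau_{k-1}})\, \mathbb E_{(x,T')}\!\left[ \mathbf{1}_{A_k}(X_{\tau_k}) \,\middle|\, \Sigma_{\tau_{k-1}}^{\mathcal M} \right] \right],
\end{align*}
and use the Markov property to evaluate the inner conditional expectation as a function of $X_{\tau_{k-1}}$, namely $y \mapsto \mathbb E_{(y, T'-\tau_{k-1})}[\mathbf{1}_{A_k}(X_{\tau_k - \tau_{k-1}})]$. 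By the $k=1$ case applied to the Brownian motion based at $(y, T'-\tau_{k-1})$ over the time increment $\tau_k - \tau_{k-1}$, this equals $\int_{A_k} p_{T'-\tau_{k-1}, T'-\tau_k}(y, y_k)\, dV_{g(T'-\tau_k)}(y_k)$. Substituting this back and invoking the inductive hypothesis for the $(k-1)$-fold expectation — or, more cleanly, iterating the conditioning step $k-1$ times — produces exactly the claimed iterated integral. A technical point to handle is that the heat representation proposition is stated for $f \in C^\infty(M)$, whereas indicators $\mathbf{1}_{A_j}$ are merely bounded Borel; I would pass from smooth functions to bounded Borel functions by a standard monotone class / dominated convergence argument, using that $p_{t,s}(x,\cdot)$ is a genuine probability density (established in Section 2).

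The main obstacle is making the Markov property rigorous in the time-dependent setting: one must verify that the filtration $\Sigma_\tau^{\mathcal M}$ is the natural one for the process, that $(U_\tau)$ restarted from a stopping-time-free deterministic time $\tau_{k-1}$ satisfies the same SDE with the same driving-noise structure (here the reparametrization $\tau = T'-t$ and the fact that $\partial_\tau^*$ and the $E_i$ are genuine vector fields on $\FF$, independent of any external clock, is what makes the SDE time-homogeneous in $\tau$), and that uniqueness upgrades this to the Markov property via the standard argument (e.g. \cite[Ch.~1]{Hsu}). Everything else — the conditioning, the identification of the one-step kernel, the induction, and the monotone-class extension — is routine once this structural fact is in place.
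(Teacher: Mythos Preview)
Your proposal is correct and is the natural way to flesh out this result. Note, however, that the paper does not actually supply a proof of Corollary~\ref{cor: kernel}: it is stated immediately after the heat-representation proposition with only the remark ``A further corollary is that the Wiener measure can be characterized by the heat kernels,'' and no argument is given. Your outline --- the $k=1$ case from the preceding proposition, the Markov property of the strong SDE solution \eqref{eq: anti}, induction via iterated conditioning, and a monotone-class passage from smooth test functions to Borel indicators --- is exactly the standard argument one would write down, and it is consistent with how the paper later uses the Markov property (e.g.\ in Proposition~\ref{prop: induced martingale} and the appendix proof of Theorem~\ref{t:IBP}). There is nothing to compare against; you have simply filled in what the paper leaves implicit.
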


\subsection{Feynman-Kac formula}

\begin{prop}\label{prop: feynmankac}
Let $s\in[0,T']$, $A_t\in\mathrm{End}(TM)$ and $Y$ a vector valued solution of the heat equation with potential, $\nabla_tY=\Delta_{g_t}Y+A_tY$, with $Y|_s=Z\in C_0^\infty(TM)$, then
\begin{align}
Y(x,T')=\E_{(x,T')}[R_{T'-s}S_{T'-s}Z(X_{T'-s})],
\end{align}
where $R_\tau=R_\tau(\gamma)\colon T_xM\to T_xM$ is the solution of the ODE $\frac{d}{d\tau}R_\tau=R_\tau S_\tau A_{T'-\tau}S_\tau^{-1}$ with $R_0=\mathrm{id}$.
\end{prop}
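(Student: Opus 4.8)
The plan is to mimic the standard proof of the Feynman--Kac formula for the heat equation with potential, transplanted to the frame bundle $\FF$ via the stochastic parallel transport isometries $S_\tau = U_0 U_\tau^{-1}$ and the multiplicative functional $R_\tau$. First I would lift everything to the frame bundle: given the vector-valued solution $Y$ on $\MM$, form the $\R^n$-valued equivariant function $\til Y : \FF \to \R^n$ (componentwise, $\til Y_i(u) = Y_{\pi(u)}(u e_i)$ as in the identification recalled before Lemma \ref{lma: commute}). By Proposition \ref{prop: commute}, the PDE $\N_t Y = \gD Y + A_t Y$ lifts to $\del_t^* \til Y + \sum_i E_i E_i \til Y + \til A \til Y = 0$, where $\til A$ denotes the lift of the endomorphism $A$. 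Running the Brownian motion $U_\tau$ backwards from $(x,T')$ and using backward time $\tau = T'-t$ with $\del_\tau^* = -\del_t^*$, this becomes $\del_\tau^* \til Y(U_\tau) = \sum_i E_i E_i \til Y(U_\tau) + \til A_{T'-\tau}\til Y(U_\tau)$ along the flow.

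The core computation is to differentiate the $T_xM$-valued process $\tau \mapsto R_\tau \til Y(U_\tau)$ (reading $\til Y(U_\tau)$ as a vector in $T_xM$ via the frame $U_0$, i.e.\ really $S_\tau Y(X_{T'-\tau})$) using Ito's lemma \eqref{eq: Ito} componentwise together with the product rule for the finite-variation process $R_\tau$. Ito's formula gives
\begin{align*}
d\til Y(U_\tau) = E_i \til Y(U_\tau)\, dW_\tau^i + \del_\tau^* \til Y(U_\tau)\, d\tau + E_i E_i \til Y(U_\tau)\, d\tau,
\end{align*}
and substituting the lifted PDE the two $d\tau$ terms collapse to $\til A_{T'-\tau}\til Y(U_\tau)\, d\tau$; translating from the moving frame $U_\tau$ to the fixed frame $U_0$ this reads $d(S_\tau Y(X_{T'-\tau})) = (\text{martingale part}) + S_\tau A_{T'-\tau} Y(X_{T'-\tau})\, d\tau = (\text{mart.}) + S_\tau A_{T'-\tau} S_\tau^{-1}(S_\tau Y(X_{T'-\tau}))\, d\tau$. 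Since $R_\tau$ has finite variation with $\frac{d}{d\tau}R_\tau = R_\tau S_\tau A_{T'-\tau} S_\tau^{-1}$ and $R_0 = \mathrm{id}$, the product rule yields
\begin{align*}
d\left(R_\tau S_\tau Y(X_{T'-\tau})\right) = R_\tau\, d(\text{martingale part}) + \left(\tfrac{d}{d\tau}R_\tau\right) S_\tau Y(X_{T'-\tau})\, d\tau + R_\tau S_\tau A_{T'-\tau} S_\tau^{-1} S_\tau Y(X_{T'-\tau})\, d\tau,
\end{align*}
and the last two $d\tau$ terms cancel exactly, leaving $R_\tau S_\tau Y(X_{T'-\tau})$ a local martingale. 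Integrating on $[0, T'-s]$, taking expectations (the local martingale is a true martingale by boundedness, see below), and using $R_0 S_0 = \mathrm{id}$, $Y|_s = Z$, $Y(X_0, T') = Y(x,T')$ gives $Y(x,T') = \E_{(x,T')}[R_{T'-s} S_{T'-s} Z(X_{T'-s})]$.

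The main obstacle is the analytic justification rather than the formal computation: one must know the backward heat equation with potential $\N_t Y = \gD Y + A_t Y$, $Y|_s = Z$ with $Z \in C_0^\infty(TM)$, has a solution on $M \times [s, T']$ with enough control (e.g.\ $Y$ and $\N Y$ bounded, exploiting bounded geometry on each time slice and the support/decay of $Z$) so that (i) the lifted identity from Proposition \ref{prop: commute} is valid, and (ii) the stochastic integral $\int_0^{\cdot} R_\tau E_i \til Y(U_\tau)\, dW_\tau^i$ is a genuine martingale and not merely a local one --- this needs $R_\tau$ to be bounded on $[0,T']$, which follows from a Gronwall estimate once $\|A_t\|$ is bounded, together with an $L^2$ bound on $E_i \til Y(U_\tau) = \til{\N Y}$. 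Modulo these standard (given the standing completeness and bounded-geometry assumptions) facts, the argument is a routine application of Ito's lemma \eqref{eq: Ito} and the variation-of-parameters structure built into $R_\tau$.
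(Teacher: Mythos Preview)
Your strategy is exactly the paper's: lift $Y$ to the equivariant $\R^n$-valued $\til Y$ on $\FF$, compute $d\til Y(U_\tau)$ via Ito's formula \eqref{eq: Ito}, multiply by the finite-variation process $R_\tau$ so that the drift cancels, then integrate and take expectations. The only problem is a sign slip that, as written, prevents the cancellation you claim. By Proposition~\ref{prop: commute} the equation $\N_t Y = \gD Y + A_t Y$ lifts to
\[
\del_t^* \til Y \;=\; \sum_i E_i E_i \til Y + \til A\,\til Y,
\]
not to $\del_t^* \til Y + \sum_i E_i E_i \til Y + \til A\,\til Y = 0$. Hence in backward time $\del_\tau^* \til Y + \sum_i E_i E_i \til Y = -\til A_{T'-\tau}\,\til Y$, and Ito's formula gives drift $-\til A_{T'-\tau}\,\til Y(U_\tau)\,d\tau$; equivalently
\[
d\bigl(S_\tau Y(X_{T'-\tau})\bigr) \;=\; (\text{martingale part}) \;-\; S_\tau A_{T'-\tau} S_\tau^{-1}\bigl(S_\tau Y(X_{T'-\tau})\bigr)\,d\tau.
\]
With the plus sign you wrote, the two $d\tau$ terms in the product rule for $R_\tau S_\tau Y$ are \emph{equal} rather than opposite, so nothing cancels. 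Once the sign is corrected, the cancellation is exact and the remainder of your argument (including the analytic justifications you outline) goes through verbatim, matching the paper's proof.
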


\begin{proof}
Let $\til Y\colon \FF\to\R^n$, $\til Y(U)=u^{-1}Y_{\pi u}$. Applying the Ito formula \eqref{eq: Ito}, we obtain
\begin{align*}
d\til Y(U_\tau)=E_i\til Y(U_\tau)\, dW_\tau^i +\partial_\tau^*\til Y(U_\tau)\, d\tau+E_iE_i\til Y(U_\tau)\, d\tau
=E_i\til Y(U_\tau)\, dW_\tau^i-A_{T'-\tau}\til Y(U_\tau)\, d\tau,
\end{align*}
where we used Lemma \ref{lma: commute} and Proposition \ref{prop: commute}. Let $\til R_\tau\colon \R^n\to\R^n$ be the solution of the ODE $\frac{d}{d\tau}\til R_\tau \til A_{T'-\tau}$ with $R_0=\mathrm{id}$. Then
\begin{align}\label{eq: heatrep3}
d(\til R_\tau\til Y(U_\tau))=\til R_\tau H_i\til Y(U_\tau)\, dW_\tau^i.
\end{align}
Integrating on $[0,T'-s]$ and taking expectations, we obtain
\begin{align*}
\til Y(u)=\E_u[\til R_{T'-s}\til Y(U_{T'-s})].
\end{align*}
Finally, we compute
\begin{align*}
Y(x,T')=u\til Y(u)=\E_u[U_0\til R_{T'-s}U_0^{-1}U_0U_{T'-s}^{-1}U_{T'-s}\til Y(U_{T'-s})]=\E_u[\til R_{T'-s}\til S_{T'-s}Z(X_{T'-s})],
\end{align*}
since $U_{T'-s}\til Y(U_{T'-s})=Y(X_{T'-s},s)=Z(X_{T'-s})$ and $R_\tau=U_0\til R_\tau U_0^{-1}$. Indeed, the last equality holds since
\begin{align*}
\frac{d}{d\tau}(U_0\til R_\tau U_0^{-1})=U_0\til R_\tau\til A_{T'-\tau}=U_0\til R_\tau U_0^{-1}U_0U_\tau^{-1}\til A_{T'-\tau}U_\tau^{-1}U_\tau U_0^{-1}=U_0\til R_\tau U_0^{-1}S_\tau A_{T'-\tau}S_\tau^{-1},
\end{align*}
which shows that $R_\tau$ and $U_0\til R_\tau U_0^{-1}$ solve the same ODE, and thus must be equal.
\end{proof}

\subsection{Induced martingales and parallel gradients}

\begin{defn}
Let $F\colon P_{T'}\mathcal M\to\R$ be integrable. Then, we define the induced martingale as
\begin{align*}
 F_\tau(\gamma):=\mathbb E_{(x,T')}[F|\Sigma_\tau](\gamma).
\end{align*}
Note that then $F_\tau$ satisfies the martingale property, i.e. for all $\sigma\leq \tau$
\begin{align*}
\mathbb E_{(x,T')}[F_\tau|\Sigma_\sigma]=F_\sigma,
\end{align*}
by the definition of conditional expectation and that $\Sigma_\sigma\subset\Sigma_\tau$. 
\end{defn}

The next results concern the induced martingale of an integrable function $F$.  Note that integrability is not a big restriction, since each uniformly integrable martingale can be represented as the induced martingale of an integrable function.  Explicitly, by standard results the induced martingale satisfies the following:

\begin{prop}\label{prop: induced martingale}
	Let $F\colon P_{T'}\mathcal M\to \R$ be integrable. Then, for almost every Brownian curve $\{\gamma_\tau\}_{\tau\in[0,T']}$ we have for the induced martingale
	\begin{align*}
	F_\tau(\gamma):=\mathbb E_{(x,T')}[F|\Sigma_\tau](\gamma)=\int_{P_{\gamma_\tau\mathcal M}}F(\gamma|_{[0,\tau]}\ast \gamma')\, d\Gamma_{\gamma_\tau}(\gamma'),
	\end{align*}
	where we integrate over all $\gamma'$ in the based path space $P_{\gamma_\tau}\mathcal M$ and $\ast$ denotes the concatenation of the two curves $\gamma|_{[0,T']}$ and $\gamma'$.
\end{prop}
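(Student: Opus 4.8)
The plan is to establish the representation formula for the induced martingale $F_\tau$ by combining the Markov property of space-time Brownian motion with the tower property of conditional expectation. First I would recall that by construction $F_\tau(\gamma) = \mathbb E_{(x,T')}[F \mid \Sigma_\tau](\gamma)$, so the statement amounts to identifying this conditional expectation with an integral against the Wiener measure $\Gamma_{\gamma_\tau}$ based at the time-$\tau$ position of the curve. The key structural input is that the space-time Brownian motion $\pi(U_\tau) = (X_\tau, T'-\tau)$ is a Markov process: this follows from the characterization of its finite-dimensional distributions by the heat kernels in Corollary \ref{cor: kernel} together with the propagator (Chapman--Kolmogorov) property $p_{t,r}(x,z) = \int_M p_{t,s}(x,y) p_{s,r}(y,z)\, dm_s(y)$ recorded after Definition \ref{defn: heat}.

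I would proceed in the following steps. (i) Using Corollary \ref{cor: kernel} and the propagator property, verify that the shifted process $(X_{\tau + \sigma})_{\sigma \geq 0}$, conditioned on $\Sigma_\tau$, is again a space-time Brownian motion started at $(X_\tau, T'-\tau)$, and is independent of $\Sigma_\tau$ given the endpoint $\gamma_\tau$; this is the strong Markov / time-homogeneity-in-the-appropriate-sense statement, and for a fixed deterministic time $\tau$ only the ordinary Markov property is needed. (ii) Identify the law of this shifted process with the Wiener measure $\Gamma_{\gamma_\tau}$ on the based path space $P_{\gamma_\tau}\mathcal M$, as defined via \eqref{eq: anti} with initial frame an arbitrary lift of $\gamma_\tau$ (the law on $\mathcal M$ is independent of the choice of lift by equivariance). (iii) Since $F$ depends on the whole path, write $F(\gamma) = F(\gamma|_{[0,\tau]} \ast \gamma')$ where $\gamma'$ is the shifted remainder; then the Markov property gives
\begin{align*}
\mathbb E_{(x,T')}[F \mid \Sigma_\tau](\gamma) = \int_{P_{\gamma_\tau}\mathcal M} F(\gamma|_{[0,\tau]} \ast \gamma')\, d\Gamma_{\gamma_\tau}(\gamma'),
\end{align*}
which is the claimed formula. (iv) Finally, invoke the standard fact that conditional expectations are defined only up to null sets, whence the identity holds for $\Gamma_{(x,T')}$-almost every curve $\gamma$, and remark that a regular version of the conditional expectation exists because $P_{\gamma_\tau}\mathcal M$ is a Polish space, so the right-hand side is genuinely a measurable function of $\gamma_\tau$.

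The main obstacle — though it is really a matter of careful bookkeeping rather than a deep difficulty — is step (ii): one must check that the pushforward under $\Pi \circ U$ of Wiener measure on $P_0\mathbb R^n$, restricted/shifted to start at time $\tau$ from the frame $U_\tau$, agrees with the measure $\Gamma_{\gamma_\tau}$ obtained by re-solving \eqref{eq: anti} from scratch with initial condition a lift of $\gamma_\tau$. This uses the flow property of the SDE \eqref{eq: anti} (uniqueness of solutions, already established) together with the independent-increments structure of $W$, and the equivariance of the construction under the $O_n$-action on $\FF$ to kill the dependence on which lift of $\gamma_\tau$ is chosen. All of this is routine stochastic analysis on manifolds in the spirit of \cite{Hsu}, adapted to the present space-time setting; since the statement is explicitly flagged as following "by standard results," I would keep the argument brief and cite the parallel development in \cite{HaslhoferNaber}.
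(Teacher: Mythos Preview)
The paper does not actually give a proof of this proposition: it is introduced by the sentence ``Explicitly, by standard results the induced martingale satisfies the following'' and then simply stated, so there is nothing to compare against at the level of argument. Your outline via the Markov property of the space-time Brownian motion, the Chapman--Kolmogorov/propagator identity from Corollary~\ref{cor: kernel}, and the flow/equivariance properties of the SDE~\eqref{eq: anti} is correct and is precisely the standard argument the authors are invoking; your remarks about frame-independence and the Polish-space regular-conditional-probability point are the right technical caveats.
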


The analysis to follows exploits a nice set of nice test functions on path space, namely cylinder functions:

\begin{defn} 
Given $\pmb{\tau}=\{\tau_j\}_{j=1}^k$ be a partition of $[0,T']$ we define evaluation maps
\begin{align*}
e_{\pmb\tau}\colon P_{T'}\mathcal M\to M^k, \qquad e_{\pmb\tau}(\gamma)=(\pi_1\gamma_{\tau_1},\pi_1\gamma_{\tau_2},\ldots,\pi_1\gamma_{\tau_k}).
\end{align*}
Given a partition $\tau$ and a smooth compactly supported function $f\colon M^k\to\R$ we obtain a \emph{cylinder function}
\begin{align*}
F\colon P_{T'}\mathcal M\to\R, \qquad F(\gamma)=f(e_{\pmb\tau}(\gamma)).
\end{align*}
The space of all cylinder functions is denoted $\cyl$.
\end{defn}

\begin{defn} Let $F \in \cyl$ and fix $\gamma \in P_{T'} \mathcal M$.  Given $V$ a vector field along $\gamma$ we let $\xi^\eps=(x^\eps_\tau,T'-\tau)_{\tau\in[0,T']}$ denote a one-parameter family of curves such that $\xi^0 = \gamma$ and $\left. \frac{\del}{\del \eps} \right|_{\eps=0}x^\eps_\tau = V_\tau$.  Then
\begin{align*}
D_V F := \left. \frac{\del}{\del \eps} \right|_{\eps=0} f(e_{\pmb\tau}(\xi^\eps)).
\end{align*}
In our setting we will only use a special class of vector fields $V$.  In particular, let $\HH$ denote the Hilbert space of $H^1$-curves $(h_\tau)_{\tau\geq0}$ in $(T_{x} M,g_{T'})$ with $h_0=0$ equipped with the inner product
\begin{align*}
\IP{h_1,h_2}_\HH=\int_0^{T'} \IP{\dot h_1,\dot h_2}_{(T_xM,g_{T'})}\, d\tau.
\end{align*}  
Given $(h_\tau)_{\tau\geq0} \in \HH$ we let $V_{\tau}(\gamma) = S_{\tau}^{-1}(\gamma) h_{\tau}$.
\end{defn}

This derivative operator admits a key integration by parts formula, cf. \cite{Driver_ibp}, \cite[Theorem A.1]{HaslhoferNaber}.  In the statement below, for $(h_\tau)_{\tau\geq0}\in \HH$ we set
\begin{align*}
\IP{h_\tau,dW_\tau}=(U_0^{-1}h_\tau)_i\, dW_\tau^i,
\end{align*}
noting that this inner product is independent of the initial choice of frame $U_0$.  The theorem is proved in an appendix (\S \ref{s:IBP})
\begin{thm} \label{t:IBP}
Let $F,G\in\cyl$, let $(h_\tau)_{\tau\geq 0}\in\HH$ and write $V=(S_\tau^{-1}h_\tau)_{\tau\geq 0}$. Then
\begin{align}\label{eq: intbyparts}
D_V^*G:=-D_VG+\frac12 G\int_0^{T'}\IP{\frac{d}{d\tau}h_\tau-S_\tau(\Rc^\nabla+\frac12\partial_t(g-b))_{T-\tau}^\dagger S_\tau^{-1}h_\tau,\, dW_\tau}
\end{align}
satisfies $\E_{(x,T')}[D_VF G]=\E_{(x,T')}[F D_V^* G]$.
\end{thm}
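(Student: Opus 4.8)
The plan is to establish a Bismut--Driver integration-by-parts formula, following \cite{Driver_ibp} and adapting \cite[Appendix]{HaslhoferNaber} to the twisted spacetime connection $\N$; the only structural change from the Ricci-flow case is that the Levi-Civita data is replaced by the Bismut data, so that the curvature term $\Rc+\tfrac12\del_tg$ of \cite{HaslhoferNaber} becomes $Q:=\Rc^\N+\tfrac12\del_t(g-b)$ --- exactly the operator appearing in the gradient evolution of Proposition \ref{p:gradientev} --- and its transpose $Q^\dagger$ shows up because $\Rc^\N$ need not be symmetric. The first reduction is to the case $G\equiv1$. The space $\cyl$ is an algebra (two cylinder functions pull back to a common refinement of their partitions, and products of compactly supported smooth functions are again such), and $D_V$ is a derivation, $D_V(FG)=(D_VF)G+F(D_VG)$, straight from its definition. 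Writing
\[
N:=\tfrac12\int_0^{T'}\IP{\tfrac{d}{d\tau}h_\tau-S_\tau\,Q^\dagger_{T'-\tau}\,S_\tau^{-1}h_\tau,\ dW_\tau}
\]
for the stochastic integral occurring in $D_V^*G$ when $G\equiv1$, so that $D_V^*G=-D_VG+GN$, it suffices to prove the divergence identity $\E_{(x,T')}[D_VH]=\E_{(x,T')}[HN]$ for all $H\in\cyl$; applying it with $H=FG$ and subtracting $\E_{(x,T')}[F\,D_VG]$ gives $\E_{(x,T')}[(D_VF)G]=\E_{(x,T')}[F(GN-D_VG)]=\E_{(x,T')}[F\,D_V^*G]$.

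To prove the divergence identity, fix $H\in\cyl$ and realize the variation $V$ of the path as a Cameron--Martin shift of the driving Brownian motion. Let $\widehat Q_\tau:=U_\tau^{-1}Q^\dagger_{T'-\tau}U_\tau\in\End(\R^n)$ be the frame representative of $Q^\dagger$ along the Brownian curve $(U_\tau)$, adapted and bounded by the bounded-geometry hypothesis. Put $\ell_\tau:=h_\tau-\int_0^\tau S_s\,Q^\dagger_{T'-s}\,S_s^{-1}h_s\,ds\in T_xM$, and for small $\eps$ let $U^\eps_\tau$ solve \eqref{eq: anti} with $W$ replaced by $W^\eps_\tau:=W_\tau+\eps\int_0^\tau U_0^{-1}\dot\ell_s\,ds$, and set $\xi^\eps_\tau:=\pi(U^\eps_\tau)$. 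The core computation is the first variation: differentiating \eqref{eq: anti} in $\eps$, the variation $\tfrac{d}{d\eps}\big|_{\eps=0}U^\eps_\tau$ decomposes into a horizontal part $v_\tau\in\R^n$ and a vertical, $\mathfrak{so}(n)$-valued part; the latter projects to zero under $\pi_*$, and once it is chosen to absorb the curvature-valued noise, $v_\tau$ solves the ODE $\dot v_\tau=U_0^{-1}\dot\ell_\tau+\widehat Q_\tau v_\tau$, whose coefficient $\widehat Q$ packages the curvature of $\N$ (Lemma \ref{l:commutatorformula}) and the $\del_tg,\del_tb$ terms in $\del^*_t$ (Proposition \ref{prop: commute}). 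Since $U_0^{-1}\dot\ell_\tau=U_0^{-1}\tfrac{d}{d\tau}h_\tau-\widehat Q_\tau U_0^{-1}h_\tau$, the choice of $\ell$ is exactly the one forcing $v_\tau\equiv U_0^{-1}h_\tau$, and therefore $\tfrac{d}{d\eps}\big|_{\eps=0}\xi^\eps_\tau=U_\tau v_\tau=U_\tau U_0^{-1}h_\tau=S_\tau^{-1}h_\tau=V_\tau$, so that $D_VH=\tfrac{d}{d\eps}\big|_{\eps=0}H(\xi^\eps)$; the transpose enters here because the antidevelopment equation pairs $Q$ against $v$ through its first slot.

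With the variation so realized, Girsanov's theorem completes the proof. Since $\ell$ is adapted with $\int_0^{T'}|\dot\ell_\tau|^2\,d\tau$ bounded, $\E_{(x,T')}[H(\xi^\eps)]=\E_{(x,T')}[H(\xi)\,Z^\eps_{T'}]$ for an exponential Girsanov density with $Z^0_{T'}=1$ and $\tfrac{d}{d\eps}\big|_{\eps=0}Z^\eps_{T'}=\tfrac12\int_0^{T'}\IP{\dot\ell_\tau,\,dW_\tau}$, the factor $\tfrac12$ (and the vanishing of the quadratic term in the $\eps$-derivative) reflecting the convention $dW^i_\tau\,dW^j_\tau=2\gd_{ij}\,d\tau$. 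Differentiating the previous identity at $\eps=0$ --- justified by dominated convergence, as $H$ is bounded and $Z^\eps$, $\tfrac{d}{d\eps}Z^\eps$ are uniformly integrable near $\eps=0$ --- yields $\E_{(x,T')}[D_VH]=\tfrac12\,\E_{(x,T')}\!\big[H\int_0^{T'}\IP{\dot\ell_\tau,\,dW_\tau}\big]$. As $\dot\ell_\tau=\tfrac{d}{d\tau}h_\tau-S_\tau\,Q^\dagger_{T'-\tau}\,S_\tau^{-1}h_\tau$, the right-hand side is exactly $\E_{(x,T')}[HN]$, which closes the reduction of the first paragraph and proves the theorem.

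The main obstacle is the first-variation computation of the middle step: carrying the Bismut torsion $H$ and the $\tfrac12\del_tb$ term faithfully through the variation of the stochastic development, so as to verify that the drift coefficient of the linearized equation is precisely $\widehat Q$ --- with the right coefficient and the torsion-induced pieces in place --- and that the accompanying vertical rotation of the frame leaves $\pi(U^\eps)$ untouched. By contrast the algebra structure of $\cyl$, Girsanov's theorem, and the differentiation under the expectation are routine given bounded geometry.
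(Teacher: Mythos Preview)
Your strategy is sound but takes a different route from the paper. The paper argues by induction on the number $k$ of evaluation times: the base case $k=1$ pairs the Feynman--Kac representation of $\grad w$ (Proposition~\ref{prop: feynmankac}) with the martingale representation \eqref{eq: heatrep} via It\^o's isometry, and the transpose $Q^\dagger$ falls out transparently from the substitution $h_\tau=R_\tau^\dagger z_\tau$; the inductive step then peels off the last evaluation time using the Markov property (equations \eqref{eq: intbyparts4}--\eqref{eq: add1}). Your Girsanov/quasi-invariance route is more conceptual and handles all cylinder functions at once, at the price of concentrating all the geometry into the first-variation computation you correctly flag as the main obstacle but do not actually carry out. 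That step is not a formality here: since $\Rc^\nabla$ is asymmetric, one must verify that after the $\mathfrak{so}(n)$-rotation absorbs the curvature noise, the drift of the linearized development is precisely the frame lift of $Q^\dagger$ rather than of $Q$ --- the torsion of $\N$ contributes both to the curvature trace and to the vertical rotation, and the $\tfrac12\del_tb$ piece of $\del_t^*$ must be tracked through as well. The paper's inductive proof avoids linearizing the development map altogether, trading that single delicate computation for more elementary Markov-property bookkeeping.
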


\begin{defn}
Let $\sigma\in[0,T']$ and let $F\in\cyl$.
The \emph{parallel gradient} $\nabla_\sigma^\perp F\colon P_{(x,T')}\MM\to (T_xM,g_{T'})$ is defined as
\begin{align*}
D_{V^\sigma}F(\gamma)=\IP{\nabla_\sigma^\perp F(\gamma),v}_{(T_xM,g_{T'})},
\end{align*}
where $V_\tau^\sigma=S_\tau^{-1}v\mathds 1_{[\sigma,T']}(\tau)$ and $v\in(T_xM,g_{T'})$.
Explicitly, if we have the representation $F=f\circ e_{\pmb\tau}$, $\pmb\tau=\{\tau_j\}_{j=1}^k$, it follows that
\begin{align}\label{eq: parallel}
\nabla_\sigma^\perp F(\gamma)=\sum_{\tau_j\geq\sigma}S_{\tau_j}\grad^{(j)}_{g_{T'-\tau_j}}f(\pi_1\gamma_{\tau_1},\ldots,\pi_1\gamma_{\tau_k}).
\end{align}  
\end{defn}

\begin{defn}  Given $F\in\cyl$, its \emph{Malliavin derivative} $\nabla^\HH F\colon P_{(x,T')}\mathcal M\to \HH$ is defined as
\begin{align*}
D_VF(\gamma)=\IP{\nabla^\HH F, h}_\HH,
\end{align*}
for every $h\in \HH$ and $V=(S_\tau^{-1} h_\tau)_{\tau\geq 0}$. It follows that the parallel gradient is the time derivative of the Malliavin gradient $\frac{d}{d\tau}(\nabla^\HH F)_\tau=\nabla_\tau^\perp F$ and furthermore
\begin{align*}
|\nabla^\HH F|_\HH^2=\int_0^{T'}|\nabla_\tau^\perp F|^2\, d\tau.
\end{align*}
\end{defn}

\begin{defn} Given the setup above and $0 \leq \tau_1 \leq \tau_2 \leq T'$, we define the \emph{Ornstein-Uhlenbeck operator} 
\begin{align*}
\LL_{(\tau_1,\tau_2)} := \int_{\tau_1}^{\tau_2}{\nabla_\tau^\perp}^*\nabla_\tau^\perp\, d\tau.
\end{align*}
\end{defn}

\begin{rmk} Our discussion above and proofs below work exclusively with cylinder functions.  Due to the integration by parts formula \eqref{eq: intbyparts} the Malliavin gradient is closable and can be extended to a closed unbounded operator from $L^2(P_{(x,T')}\MM)$ to $L^2(P_{(x,T')}\MM,\HH)$ with $\cyl$ being a dense subset of the domain (cf. \cite{Hsu} Section 8).  The definitions of all derivative operators considered here can be similarly extended.
\end{rmk}

\subsection{Martingale representation theorem}

\begin{prop}\label{p:martingale}
	Let $F\in \cyl$ 
	 and let $F_\tau$ be the induced martingale. Then $F_\tau$ solves
	\begin{align*}
	dF_\tau=\IP{\nabla_\tau^\perp F_\tau,dW_\tau}, \qquad F|_{\tau=0}=F_0.
	\end{align*}
\end{prop}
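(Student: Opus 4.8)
The plan is to reduce the statement to the Clark-Ocone type representation: any $L^2$ martingale adapted to the Brownian filtration $\Sigma_\tau$ can be written as $F_0 + \int_0^\tau \IP{Z_\sigma, dW_\sigma}$ for a unique predictable process $Z$, and the task is to identify $Z_\tau$ with $\nabla_\tau^\perp F_\tau$. First I would invoke the martingale representation theorem for Brownian motion on $\R^n$ (via the map $U$ from $P_0\R^n$, which carries $\Gamma_0$ to $\Gamma_u$ and intertwines the filtrations $\Sigma_\tau^{\R^n}$ and $\Sigma_\tau^{\FF}$, hence $\Sigma_\tau^{\MM}$): since $F_\tau$ is a square-integrable $\Sigma_\tau$-martingale, there is a unique predictable $\R^n$-valued process $(z_\tau)$ with $F_\tau = F_0 + \int_0^\tau (z_\sigma)_i\, dW_\sigma^i$, and we may write this as $dF_\tau = \IP{S_\tau^{-1}\zeta_\tau, dW_\tau}$ for $\zeta_\tau = U_0 z_\tau$ a vector along the Brownian curve. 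It then remains to show $\zeta_\tau = \nabla_\tau^\perp F_\tau$.

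To identify the integrand I would test against the integration by parts formula of Theorem \ref{t:IBP}. Fix $\sigma \in [0,T']$, $v \in (T_xM, g_{T'})$, and take the $\HH$-curve $h_\tau = v\, \mathds{1}_{[\sigma,T']}(\tau)$ more precisely its absolutely continuous analogue, or better, a general $h\in\HH$ supported in $[\sigma, T']$; write $V = (S_\tau^{-1}h_\tau)$. Applying Theorem \ref{t:IBP} with $G \equiv 1$ gives $\E_{(x,T')}[D_V F] = \E_{(x,T')}[F\cdot D_V^*1]$, where $D_V^*1 = \tfrac12 \int_0^{T'}\IP{\dot h_\tau - S_\tau(\Rc^\nabla + \tfrac12\partial_t(g-b))^\dagger S_\tau^{-1}h_\tau, dW_\tau}$. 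More usefully, I would localize in time: for a cylinder function depending only on times $\geq \sigma$, or after conditioning, one shows $\E[D_V F \mid \Sigma_\sigma] $ relates to the martingale increments on $[\sigma,T']$. The cleanest route is: by the tower property and the explicit formula \eqref{eq: parallel} for $\nabla^\perp_\sigma F$ in terms of the representation $F = f\circ e_{\pmb\tau}$, combined with the heat-kernel description of conditional expectations (Proposition \ref{prop: induced martingale} / Corollary \ref{cor: kernel}), one computes $F_\tau$ explicitly as an iterated heat-flow expression and differentiates. Then apply the Bochner/commutation identity of Proposition \ref{prop: commute} together with Ito's formula \eqref{eq: Ito} applied to the lift of $F_\tau$ on $\FF$: writing $F_\tau = \widetilde{G}(U_\tau)$ for the appropriate equivariant $G$ built from $f$ and the heat flows, \eqref{eq: Ito} yields $dF_\tau = E_i\widetilde{G}(U_\tau)\, dW_\tau^i + (\partial_\tau^* + E_iE_i)\widetilde{G}(U_\tau)\, d\tau$, and the drift term vanishes because each factor solves the (conjugate) heat equation — exactly as in the proof that $w(x,T') = \E[f(X_{T'-s})]$. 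The martingale part $E_i\widetilde{G}(U_\tau)$ is then, by Lemma \ref{lma: commute} and Proposition \ref{prop: commute}, precisely $U_\tau^{-1}\grad$ of the relevant heat flow, which matches $S_\tau^{-1}$ times \eqref{eq: parallel}; hence $\zeta_\tau = \nabla_\tau^\perp F_\tau$.

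Concretely the cleanest self-contained argument, which I would write up, is: given $F = f\circ e_{\pmb\tau}$ with $\pmb\tau = \{\tau_1 < \dots < \tau_k\}$, define for $\tau \in [\tau_{j}, \tau_{j+1})$ the function on $M$ obtained by fixing the past coordinates $\pi_1\gamma_{\tau_1},\dots,\pi_1\gamma_{\tau_j}$ and running the conjugate heat flow in the remaining $k-j$ variables down to time $T'-\tau$; Proposition \ref{prop: induced martingale} (equivalently Corollary \ref{cor: kernel}) identifies $F_\tau(\gamma)$ with this function evaluated along the Brownian curve. This function, call it $w^{(j)}_\tau$, solves $\square w^{(j)}_\tau = 0$ on each subinterval, so Ito's formula \eqref{eq: Ito} applied to $\widetilde{w^{(j)}_\tau}(U_\tau)$ kills the drift and gives $dF_\tau = \IP{U_\tau^{-1}\grad_{g_{T'-\tau}} w^{(j)}_\tau, \, U_0^{-1}\,\cdot\,}$-type expression, i.e. $dF_\tau = \IP{S_\tau(\grad w^{(j)}_\tau), dW_\tau}$ on $(\tau_j, \tau_{j+1})$; continuity of $F_\tau$ across the partition times handles the jumps in which coordinates are "frozen". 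Comparing $S_\tau \grad w^{(j)}_\tau$ with the explicit formula \eqref{eq: parallel} shows it equals $\nabla_\tau^\perp F_\tau$, completing the proof. \textbf{The main obstacle} is the bookkeeping at the partition points $\tau_j$: one must check that the parallel gradient \eqref{eq: parallel} (which is piecewise constant-in-structure, picking up a new summand $S_{\tau_j}\grad^{(j)}f$ as $\tau$ crosses each $\tau_j$) matches the integrand coming from the heat-flow representation, and that no extra contribution to the martingale part arises at these instants — this is a continuity/adaptedness check, harmless but the one place where care is genuinely needed; everything else follows from \eqref{eq: Ito}, Lemma \ref{lma: commute}, Proposition \ref{prop: commute}, and Proposition \ref{prop: induced martingale} essentially as in \cite{HaslhoferNaber}.
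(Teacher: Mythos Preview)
Your final ``cleanest self-contained argument'' is essentially the paper's proof: write $F_\tau(\gamma)=f_\tau(X_{\tau_1},\dots,X_{\tau_l},X_\tau)$ for $\tau\in(\tau_l,\tau_{l+1})$ via Proposition~\ref{prop: induced martingale} and Corollary~\ref{cor: kernel}, observe that $f_\tau$ solves the backward heat equation in its last slot, lift to $\FF$ and apply It\^o's formula \eqref{eq: Ito} so the drift vanishes, and identify the martingale integrand $E_i^{(l+1)}\til f_\tau$ with $\IP{U_0e_i,\nabla_\tau^\perp F_\tau}$ using Lemma~\ref{lma: commute} and \eqref{eq: parallel}. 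Your opening detour through the abstract martingale representation theorem plus Theorem~\ref{t:IBP} is unnecessary --- the direct computation already produces the integrand explicitly, so there is nothing left to ``identify'' by testing; you should simply drop those two paragraphs and write up the third.
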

\begin{rmk}
	This result shows that martingales are the natural generalization of the (backward) heat-flow to the path space $P\mathcal M$. Indeed, let $F_\tau(\gamma)=f_\tau(\pi_1\gamma_\tau)$ for some smooth function $f\colon\mathcal M\to\R$. Then Proposition \ref{p:martingale} together with  \eqref{eq: Ito} yield
	\begin{align*}
	0=df_\tau(\pi_1\gamma_\tau)-\IP{\nabla f_\tau(\pi_1\gamma_\tau),dW_\tau}=(\partial_\tau+\Delta_{g_{T-\tau}}) f_\tau(\pi_1\gamma_\tau),
	\end{align*}
	which means that $f_\tau$ solves the backward heat equation.
\end{rmk}
\begin{proof}
	Let $F(\gamma)=f(\pi_1\gamma_{\tau_1},\ldots,\pi_1\gamma_{\tau_k})$, where $f\colon M^k\to\R$ is a smooth compactly supported function. Then, for $\tau\in(\tau_l,\tau_{l+1})$, by Corollary \ref{cor: kernel} and Proposition \ref{prop: induced martingale} we have
	\begin{align*}
	F_\tau(\gamma)
	=&\ \int_{P_{\gamma_\tau}\mathcal M}F(\gamma|_{[0,\tau]}\ast \gamma')\, d\Gamma_{\gamma_\tau}(\gamma')\\
	=&\ \int_{P_{\gamma_\tau}\mathcal M}f(\pi_1\gamma_{\tau_1},\ldots,\pi_1\gamma_{\tau_l}, \pi_1\gamma_{\tau_{l+1}-\tau},\ldots, \pi_1\gamma_{\tau_k-\tau})\, d\Gamma_{\gamma_\tau}(\gamma')\\
	=&\ \int_{M^{k-l}}f(X_{\tau_1},\ldots,X_{\tau_l},y_{l+1},\ldots,y_k)p_{T'-\tau,T'-\tau_{l+1}}(X_\tau,y_{l+1})\ldots p_{T'-\tau_{k-1},T'-\tau_k}(y_{k-1},y_k)\\
	&\ \qquad dV_{g_{T'-\tau_{l+1}}}(y_{l+1})\ldots\, dV_{g_{T'-\tau_k}}(y_k)\\
	=:&\ f_\tau(X_{\tau_1},\ldots, X_{\tau_l},X_\tau).
	\end{align*}
	Note that for $(x_1,\ldots,x_l)$ fixed, $(x,\tau)\mapsto f_\tau(x_1,\ldots,x_l,x)$ solves $(\partial_\tau+\Delta^{(l+1)})f_\tau=0$, where $\Delta^{(l+1)}$ acts on the last entry.
	
	Let $\til f_\tau=f_\tau\circ \otimes_1^{l+1}\pi_1\circ\otimes_1^{l+1}\pi$ and $\til F_\tau=F_\tau\circ\Pi$. Then $\til F_\tau(U)=\til f_\tau(U_{\tau_1},\ldots,U_{\tau_l},U_\tau)$. According to \eqref{eq: Ito} we have then
	\begin{align*}
	d\til F_\tau(U)=&\ d\til f_\tau(U_{\tau_1},\ldots,U_{\tau_l},U_\tau)\\
	=&\ (\partial^*_\tau\til f_\tau(U_{\tau_1},\ldots,U_{\tau_l},U_\tau)+E^{(l+1)}_iE^{(l+1)}_i\til f_\tau(U_{\tau_1},\ldots,U_{\tau_l},U_\tau)\, d\tau\\
	&\ \qquad +\IP{E^{(l+1)}_i\til f_\tau(U_{\tau_1},\ldots,U_{\tau_l},U_\tau),\, dW_\tau^i}.
	\end{align*}
	Note that due to Proposition \ref{prop: commute} we have $\partial^*_\tau\til f_\tau+E^{(l+1)}_iE^{(l+1)}_i\til f_\tau=0$. Next, we compute
	\begin{align*}
	E_i^{(l+1)}\til f_\tau(U_{\tau_1},\ldots,U_{\tau_l},U_\tau)=&\ (U_\tau e_i)^*\til f_\tau(U_{\tau_1},\ldots,U_{\tau_l},U_\tau)\\
	=&\ (U_\tau e_i)f_\tau(X_{\tau_1},\ldots,X_{\tau_l},X_\tau)\\
	=&\ \IP{U_\tau e_i,\grad^{(l+1)}_{g_{T'-\tau}}f_\tau(X_{\tau_1},\ldots,X_{\tau_l},X_\tau)}_{(T_{X_\tau}M,g_{T'-\tau})}\\
	=&\ \IP{S_\tau U_\tau e_i,S_\tau\grad^{(l+1)}_{g_{T'-\tau}}f_\tau(X_{\tau_1},\ldots,X_{\tau_l},X_\tau)}_{(T_xM,g_{T'})}\\
	=&\ \IP{U_0 e_i,\nabla_{\tau}^\perp F_\tau(\gamma)}_{(T_{x}M,g_{T'})},
	\end{align*}
	where we used Lemma \ref{lma: commute} in the second line and \eqref{eq: parallel} in the last line. All in all we find
	\begin{align*}
	dF_\tau(\gamma)=d\til F_\tau(U)=\IP{U_0 e_i,\nabla_{\tau}^\perp F_\tau(\gamma)}_{(T_{x}M,g_{T'})}\, dW_\tau^i=\IP{\nabla_{\tau}^\perp F_\tau(\gamma),dW_\tau},
	\end{align*}
	which was the claim.
\end{proof}

\begin{cor}\label{cor: quadratic}
	Let $F\in\cyl$. Then the quadratic variation $[F,F]_\tau$ of the induced martingale $F_\tau$ satisfies
	\begin{align*}
	d[F,F]_\tau=2|\nabla_\tau F_\tau|^2\, d\tau.
	\end{align*}
\end{cor}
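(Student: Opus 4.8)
The plan is to derive the quadratic variation identity directly from the martingale representation theorem, Proposition \ref{p:martingale}. The key input is the convention adopted at the start of Section \ref{s:Bochnerpath} that the driving Brownian motion $(W_\tau)$ has generator $\Delta^{\R^n}$ rather than $\tfrac12\Delta^{\R^n}$, so that its covariation is $dW_\tau^i\, dW_\tau^j = 2\delta_{ij}\, d\tau$.

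First I would recall from Proposition \ref{p:martingale} that the induced martingale of $F\in\cyl$ solves the stochastic differential equation
\begin{align*}
dF_\tau = \IP{\nabla_\tau^\perp F_\tau, dW_\tau} = (U_0^{-1}\nabla_\tau^\perp F_\tau)_i\, dW_\tau^i,
\end{align*}
where the second equality uses the definition of the stochastic integral $\IP{\cdot, dW_\tau}$ given before Theorem \ref{t:IBP}, expressed through a fixed initial frame $U_0$. Since $F_\tau$ is a continuous semimartingale whose martingale part is this Ito integral and whose finite-variation part is zero, the quadratic variation $[F,F]_\tau$ is computed purely from the Ito integral term.

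Next I would compute the quadratic variation of the Ito integral: writing $a_\tau^i := (U_0^{-1}\nabla_\tau^\perp F_\tau)_i$, standard Ito calculus gives
\begin{align*}
d[F,F]_\tau = a_\tau^i a_\tau^j\, dW_\tau^i\, dW_\tau^j = 2\delta_{ij} a_\tau^i a_\tau^j\, d\tau = 2|U_0^{-1}\nabla_\tau^\perp F_\tau|^2\, d\tau.
\end{align*}
Since $U_0\colon\R^n\to(T_xM,g_{T'})$ is an isometry, we have $|U_0^{-1}\nabla_\tau^\perp F_\tau|^2 = |\nabla_\tau^\perp F_\tau|^2$, which yields $d[F,F]_\tau = 2|\nabla_\tau^\perp F_\tau|^2\, d\tau$ as claimed. (I note the statement writes $\nabla_\tau F_\tau$ for what is elsewhere denoted $\nabla_\tau^\perp F_\tau$; these are the same object.)

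There is essentially no obstacle here — this is a one-line consequence of Proposition \ref{p:martingale} combined with the nonstandard normalization of Brownian motion. The only point requiring a modicum of care is bookkeeping the factor of $2$ coming from the convention $dW_\tau^i\, dW_\tau^j = 2\delta_{ij}\, d\tau$, and confirming that the isometry property of $U_0$ makes the expression frame-independent, consistent with the remark following the definition of $\IP{h_\tau, dW_\tau}$.
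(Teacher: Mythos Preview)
Your proposal is correct and is precisely the intended argument: the paper states this as a corollary with no proof, since it follows immediately from the martingale representation $dF_\tau=\IP{\nabla_\tau^\perp F_\tau,dW_\tau}$ of Proposition~\ref{p:martingale} together with the convention $dW_\tau^i\,dW_\tau^j=2\delta_{ij}\,d\tau$. Your bookkeeping of the factor $2$ and the frame-independence via the isometry $U_0$ are exactly the points one needs.
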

\subsection{Evolution of the parallel gradient}

In the next result we give is about the evolution of the parallel gradient. 

\begin{thm}\label{thm: bochner}
Let $F\in \cyl$ 
and let $\sigma\geq0$ be fixed. Then the parallel gradient of the induced martingale $\nabla_\sigma^\perp F_\tau$ satisfies
\begin{align*}
d(\nabla_\sigma^\perp F_\tau)=\IP{\nabla_\tau^\perp\nabla_\sigma^\perp F_\tau,\, dW_\tau}
+({\Rc^\nabla}+\frac12{\partial_t (g-b)})_\tau(\nabla_\tau^\perp F_\tau)\mathds{1}_{[\sigma,T']}(\tau)\, d\tau+\nabla_\sigma^\perp F_\sigma\delta_\sigma(\tau)\, d\tau,
\end{align*}
where $\IP{{\Rc^\nabla}+\frac12{\partial_t (g-b)})_\tau(v),w}_{T_xM,g_{T'}}=({\Rc_{g_t}^\nabla}+\frac12{\partial_t (g-b)})|_{t=T'-\tau}(S_\tau^{-1}v,S_\tau^{-1}w)$.
\end{thm}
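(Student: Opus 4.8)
The plan is to combine the martingale representation theorem (Proposition \ref{p:martingale}) with the It\^o formula \eqref{eq: Ito} applied to an appropriate lift of the parallel gradient, using the commutation relation of Proposition \ref{prop: commute} and the twisted Bochner identity (Proposition \ref{p:gradientev}) in the same spirit as the proof of Proposition \ref{p:martingale}. Concretely, for $\tau$ in an open subinterval $(\tau_l,\tau_{l+1})$ of the partition $\pmb\tau$ underlying $F = f\circ e_{\pmb\tau}$, the proof of Proposition \ref{p:martingale} identifies $F_\tau(\gamma) = f_\tau(X_{\tau_1},\dots,X_{\tau_l},X_\tau)$ where $(x,\tau)\mapsto f_\tau(x_1,\dots,x_l,x)$ solves the backward heat equation $(\partial_\tau + \Delta^{(l+1)})f_\tau = 0$ in the last slot, and from \eqref{eq: parallel} one has, for $\sigma \le \tau_l$,
\begin{align*}
\nabla_\sigma^\perp F_\tau(\gamma) = \sum_{j=1}^{l} S_{\tau_j}\grad^{(j)}_{g_{T'-\tau_j}} f_\tau + S_\tau \grad^{(l+1)}_{g_{T'-\tau}} f_\tau.
\end{align*}
First I would treat the "interior" case $\sigma < \tau$ with both bounded away from the partition points, where the $\delta_\sigma$ term is absent. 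I would lift the $(T_xM,g_{T'})$-valued process $\nabla_\sigma^\perp F_\tau$ to an $\R^n$-valued equivariant function $\til G$ on $\FF$ via $\til G(U) = U^{-1}\bigl(\sum_j S_{\tau_j}\grad^{(j)}f_\tau + U_\tau\, U_\tau^{-1}S_\tau\grad^{(l+1)}f_\tau\bigr)$ — more precisely, writing everything through the frame so that the first $l$ arguments are frozen and only the last pair $(U_\tau)$ is dynamical, one gets a function of $(U_{\tau_1},\dots,U_{\tau_l},U_\tau)$ to which \eqref{eq: Ito} applies.

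The key computation is then the drift term. Applying \eqref{eq: Ito}, the drift of $\nabla_\sigma^\perp F_\tau$ comes from $(\partial_\tau^* + E^{(l+1)}_iE^{(l+1)}_i)$ acting on the lift of $\grad^{(l+1)}f_\tau$ (the frozen terms $S_{\tau_j}\grad^{(j)}f_\tau$ for $j\le l$ contribute only through the $E_i$-derivatives, giving the martingale part $\IP{\nabla_\tau^\perp\nabla_\sigma^\perp F_\tau, dW_\tau}$, by the same computation as at the end of the proof of Proposition \ref{p:martingale} together with Proposition \ref{prop: commute}). Here is where Proposition \ref{p:gradientev} enters: since $f_\tau$ solves the backward heat equation $\square f_\tau = 0$ in the last slot, Proposition \ref{p:gradientev} (in backward time, so $\partial_t \mapsto -\partial_\tau$, and recalling $\til{\N_t T} = \partial_t^*\til T$, $\til{\gD T} = \sum E_iE_i\til T$ from Proposition \ref{prop: commute}) gives exactly
\begin{align*}
\bigl(\partial_\tau^* + \textstyle\sum_i E^{(l+1)}_iE^{(l+1)}_i\bigr)\widetilde{\grad^{(l+1)}_{g_{T'-\tau}} f_\tau} = \widetilde{\bigl(\Rc^\N + \tfrac12\partial_t(g-b)\bigr)\!\left(\grad^{(l+1)}f_\tau,\cdot\right)^{\sharp}},
\end{align*}
which upon transporting back by $S_\tau$ and unwinding the frame yields the claimed drift $\bigl(\Rc^\nabla + \tfrac12\partial_t(g-b)\bigr)_\tau(\nabla_\tau^\perp F_\tau)$, since on this interval $\nabla_\tau^\perp F_\tau = S_\tau\grad^{(l+1)}f_\tau$. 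One also checks the indicator $\mathds 1_{[\sigma,T']}(\tau)$ is the correct prefactor: for $\tau < \sigma$ we have $\nabla_\sigma^\perp F_\tau = S_\tau\nabla_\sigma^\perp F_\tau$ (no frozen terms, and the sum in \eqref{eq: parallel} only runs over $\tau_j \ge \sigma > \tau$, so in fact one must re-express $\nabla_\sigma^\perp F_\tau$ as $S_\tau(U_\tau^{-1}\cdots)$ and the same heat-equation cancellation kills the drift entirely), so the drift vanishes for $\tau < \sigma$.

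The main obstacle — and the remaining work — is the jump term $\nabla_\sigma^\perp F_\sigma\,\delta_\sigma(\tau)\,d\tau$ at $\tau = \sigma$ and, more generally, handling the partition points $\tau = \tau_j$. At $\tau = \sigma$ the range of summation in \eqref{eq: parallel} changes: for $\tau$ slightly less than $\sigma$ the term indexed by the "current" slot is not yet included, while for $\tau \ge \sigma$ it is, producing a jump of size $\nabla_\sigma^\perp F_\sigma$; I would make this rigorous by writing $\nabla_\sigma^\perp F_\tau$ as a sum over $j$ with $\tau_j \ge \sigma$ that is piecewise defined across the grid $\{\sigma\}\cup\pmb\tau$, computing the It\^o differential on each open interval by the above, and then reconciling the left and right limits at each breakpoint. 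At a partition point $\tau_j$ (with $\tau_j > \sigma$) the integrand is continuous — the backward heat evolution glues $f_\tau$ across $\tau_j$ smoothly after incorporating the new $\grad^{(j)}$ term — so no jump occurs there; only at $\tau = \sigma$, where a whole block of gradient terms switches on, does the Dirac mass appear, with coefficient $\nabla_\sigma^\perp F_\sigma$ by inspection of \eqref{eq: parallel}. Finally I would assemble the interior drift, the martingale part, and the single jump at $\sigma$ into the stated formula, and note by a density/closability argument (as in the Remark following Theorem \ref{t:IBP}) that it is enough to have argued for $F\in\cyl$.
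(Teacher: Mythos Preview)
Your proposal is correct and follows essentially the same route as the paper: lift $\nabla_\sigma^\perp F_\tau$ to the frame bundle via $G_i(U)=\langle U_0 e_i,\nabla_\sigma^\perp F_\tau\rangle$, apply the It\^o formula \eqref{eq: Ito}, use that $(\partial_\tau^*+E_m^{(l+1)}E_m^{(l+1)})\til f_\tau=0$ to kill the drift of the frozen summands $E_i^{(j)}\til f_\tau$ for $j\le l$, and identify the remaining commutator $[\partial_\tau^*+E_m^{(l+1)}E_m^{(l+1)},E_i^{(l+1)}]\til f_\tau$ with the $\Rc^\nabla+\tfrac12\partial_t(g-b)$ term via Propositions \ref{p:gradientev} and \ref{prop: commute}.

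One small clarification: your discussion of the region $\tau<\sigma$ is more involved than necessary. Since $F_\tau$ is $\Sigma_\tau$-measurable, it depends only on $\gamma|_{[0,\tau]}$, while the variation defining $\nabla_\sigma^\perp$ vanishes on $[0,\sigma)\supset[0,\tau]$; hence $\nabla_\sigma^\perp F_\tau\equiv 0$ outright for $\tau<\sigma$, and there is no need to invoke a heat-equation cancellation for the drift there. This also immediately explains the jump $\nabla_\sigma^\perp F_\sigma\,\delta_\sigma(\tau)$ as the one-sided limit from $\tau\downarrow\sigma$.
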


\begin{proof}
Since $F_\tau$ is $\Sigma_\tau$-measurable, i.e. it depends only on times smaller than $\tau$, we have that $\nabla_\sigma^\perp F_\tau=0$ as soon as $\tau<\sigma$. At $\sigma=\tau$ we have a jump discontinuity, which is expressed in the $\delta$-notation $\nabla_\sigma^\perp F_\sigma\delta_\sigma(\tau)$. For $\tau>\sigma$ we aim to show the evolution 
\begin{align}\label{eq: Bochner}
d(\nabla_\sigma^\perp F_\tau)=\IP{\nabla_\tau^\perp\nabla_\sigma^\perp F_\tau,\, dW_\tau}
+({\Rc^\nabla}+\frac12{\partial_t (g-b)})_\tau(\nabla_\tau^\perp F_\tau)\, d\tau.
\end{align}
Let $F(\gamma)=f(\pi_1\gamma_{\tau_1},\ldots\pi_1\gamma_{\tau_k})$ be a cylinder function.
Let $\tau\in(\tau_l,\tau_{l+1})$, then $F_\tau(\gamma)=f_\tau(X_{\tau_1},\ldots,X_{\tau_k},X_{\tau})$ as in the proof of Proposition \ref{p:martingale} and by virtue of \eqref{eq: parallel}
\begin{align*}
\nabla_\sigma^\perp F_\tau(\gamma)=\sum_{\tau_j\geq\sigma}S_{\tau_j}\grad_{g_{T'-{\tau_j}}}^{(j)}f_\tau(X_{\tau_1},\ldots,X_{\tau_k},,X_\tau) + S_\tau \grad^{(l+1)}_{g_{T'-\tau}} f_\tau(X_{\tau_1},\ldots,X_{\tau_k},X_\tau).
\end{align*}
Consider $G_i(U):=\IP{U_0e_i,\nabla_\sigma^\perp F_\tau(\Pi U)}$. Then
\begin{align*}
G_i(U):=&\ \IP{U_0e_i,\nabla_\sigma^\perp F_\tau(\Pi U)}_{T_xM,g_{T'}}\\
=&\ \sum_{\tau_j\geq\sigma}\IP{U_{\tau_j} e_i,\grad^{(j)}_{g_{T'-\tau_j}} f_\tau(X_{\tau_1},\ldots,X_{\tau_k},X_\tau)}_{T_{X_{\tau_j}}M,g_{T'-\tau_j}}\\
&+\ \IP{U_\tau e_i,\grad^{(l+1)}_{g_{T'-\tau}} f_\tau(X_{\tau_1},\ldots,X_{\tau_k},X_\tau)}_{T_{X_\tau}M,g_{T'-\tau}}\\
=&\ \sum_{\tau_j\geq\sigma}E_i^{(j)}\til f_\tau(U_{\tau_1},\ldots,U_{\tau_l},U_\tau)
+
E^{(l+1)}_i\til f_\tau(U_{\tau_1},\ldots,U_{\tau_l},U_\tau),
\end{align*}
where we used Lemma \ref{lma: commute} in the third line. Then with \eqref{eq: Ito} we find
\begin{align*}
dG_i(U)=&\sum_{\tau_j\geq\sigma}\left(\partial_\tau^*E^{(j)}_i\til f_\tau(U_\tau)+E_m^{(l+1)}E_m^{(l+1)}E_i^{(j)}\til f_\tau(U_{\tau_1},\ldots,U_{\tau_l},U_\tau)\right)\, d\tau
+E^{(l+1)}_mE^{(j)}_i\til f_\tau(U_\tau)\, dW_\tau^m\\
&\ +\left(\partial_\tau^*E^{(l+1)}_i\til f_\tau(U_\tau)+E_m^{(l+1)}E_m^{(l+1)}E_i^{(l+1)}\til f_\tau(U_{\tau_1},\ldots,U_{\tau_l},U_\tau)\right)\, d\tau
+E^{(l+1)}_mE^{(l+1)}_i\til f_\tau(U_\tau)\, dW_\tau^m\\
=&\ \sum_{\tau_j\geq\sigma}E_i^{(j)}(\partial_\tau^*+E^{(l+1)}_mE^{(l+1)}_m)\til f_\tau(U_\tau)\, d\tau
+E^{(l+1)}_mE^{(j)}_i\til f_\tau(U_\tau)\, dW_\tau^m\\
&\ +E^{(l+1)}_i(\partial_\tau^*+E^{(l+1)}_mE^{(l+1)}_m)\til f_\tau(U_\tau)\, d\tau
+E^{(l+1)}_mE^{(l+1)}_i\til f_\tau(U_\tau)\, dW_\tau^m
+[\partial_\tau^*+E^{(l+1)}_mE^{(l+1)}_m,E_i]\til f_\tau(U_\tau)\, d\tau.
\end{align*}
Recall that $(\partial_\tau^*+E^{(l+1)}_mE^{(l+1)}_m)\til f_\tau=0$ due to Proposition \ref{prop: commute}.  Furthermore, using Propositions \ref{p:gradientev} and \ref{prop: commute} we deduce
\begin{align*}
[\partial_\tau^*+E^{(l+1)}_mE^{(l+1)}_m,E_i]\til f_\tau=(\widetilde{\Rc^\nabla}+\frac12\widetilde{\partial_t (g-b)})_{im}E^{(l+1)}_m\til f_\tau.
\end{align*}
All in all this gives us
\begin{align*}
dG_i(U)
=&\sum_{\tau_j\geq\sigma}E^{(l+1)}_mE^{(j)}_i\til f_\tau(U_\tau)\, dW_\tau^m
+E^{(l+1)}_mE^{(l+1)}_i\til f_\tau(U_\tau)\, dW_\tau^m
+(\widetilde{\Rc^\nabla}+\frac12\widetilde{\partial_t (g-b)})_{im}E^{(l+1)}_m\til f_\tau
\end{align*}
Projecting down yields
\begin{align*}
&\sum_{\tau_j\geq\sigma}E^{(l+1)}_mE^{(j)}_i\til f_\tau(U_\tau)\, dW_\tau^m
+E^{(l+1)}_mE^{(l+1)}_i\til f_\tau(U_\tau)\, dW_\tau^m\\
&\ \quad = \IP{\sum_{\tau_j\geq\sigma}(S_\tau\otimes S_{\tau_j})\nabla^{(l+1)}\nabla^{(j)} f_\tau
+(S_\tau\otimes S_\tau)\nabla^{(l+1)}\nabla^{(l+1)}f_\tau,dW_\tau\otimes U_0e_i}\\
&\ \quad = \IP{\nabla_\tau^\perp\nabla_\sigma^\perp F_\tau(\gamma),dW_\tau\otimes U_0e_i},
\end{align*}
and 
\begin{align*}
(\widetilde{\Rc^\nabla}+\frac12\widetilde{\partial_t(g-b)})_{im}E_m^{(l+1)}\til f_\tau(U_\tau)\, d\tau
=\IP{(\Rc^\nabla+\frac12\partial_t(g-b))_\tau(\nabla_\tau^\perp F_\tau)\, d\tau, U_0e_i},
\end{align*}
giving the result.
\end{proof}

\begin{lemma}\label{lemma: hessian}
Let $F\in\cyl$ and $\tau,\sigma\geq0$ fixed. Then
\begin{align*}
\nabla_\tau^\perp |\nabla_\sigma^\perp F|^2=2\IP{\nabla_\tau^\perp \nabla_\sigma^\perp F,\nabla_\sigma^\perp F}.
\end{align*}	
\end{lemma}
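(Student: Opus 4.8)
The plan is to recognize that Lemma \ref{lemma: hessian} is nothing but the Leibniz rule for the path-space directional derivative $D_V$, applied to the squared norm of a fixed-vector-space-valued function, exactly as the Riemannian identity $\nabla|f|^2 = 2\IP{\nabla f,f}$ holds for a metric connection. Concretely, I would fix $w \in (T_xM,g_{T'})$, let $V^\tau_s = S_s^{-1}w\,\mathds 1_{[\tau,T']}(s)$ be the associated vector field along a path $\gamma\in P_{(x,T')}\MM$, and recall from the definitions that for a differentiable scalar function $\Phi$ on $P_{(x,T')}\MM$ one has $\IP{\nabla_\tau^\perp\Phi,w} = D_{V^\tau}\Phi$, while applying $D_{V^\tau}$ componentwise to a differentiable map $\Psi\colon P_{(x,T')}\MM\to(T_xM,g_{T'})$ produces precisely the contraction of $\nabla_\tau^\perp\Psi$ against $w$ in its first slot; for $\Psi = \nabla_\sigma^\perp F$ this two-tensor-valued object is exactly $\nabla_\tau^\perp\nabla_\sigma^\perp F$ appearing in Theorem \ref{thm: bochner}.

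Next I would check the two small ingredients. First, for $F\in\cyl$ the map $\gamma\mapsto\nabla_\sigma^\perp F(\gamma)$ is differentiable on path space: by \eqref{eq: parallel} it is a finite sum of terms $S_{\tau_j}(\gamma)\,\grad^{(j)}_{g_{T'-\tau_j}}f(e_{\pmb\tau}(\gamma))$, each smooth in $\gamma$ because the stochastic parallel transports $S_{\tau_j}$ depend smoothly on the underlying path (ODE dependence on data); hence $|\nabla_\sigma^\perp F|^2 = \IP{\nabla_\sigma^\perp F,\nabla_\sigma^\perp F}_{(T_xM,g_{T'})}$ lies in the natural domain on which $\nabla_\tau^\perp$ is defined, even though it is not itself a cylinder function. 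Second, since $D_V\Phi(\gamma) = \frac{d}{d\eps}\big|_{\eps=0}\Phi(\xi^\eps)$ is a genuine directional derivative it obeys the product rule, and because the inner product on the fixed space $(T_xM,g_{T'})$ does not vary with $\gamma$ it commutes with pairing against fixed vectors. Combining these gives $D_{V^\tau}|\nabla_\sigma^\perp F|^2 = 2\IP{D_{V^\tau}\nabla_\sigma^\perp F,\,\nabla_\sigma^\perp F}$.

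Finally I would expand $\nabla_\sigma^\perp F = \sum_a \IP{\nabla_\sigma^\perp F,v_a}\,v_a$ in a fixed orthonormal basis $\{v_a\}$ of $(T_xM,g_{T'})$, apply the first-step identity to each scalar component $\IP{\nabla_\sigma^\perp F,v_a}$ to rewrite $D_{V^\tau}\nabla_\sigma^\perp F$ as the contraction of $\nabla_\tau^\perp\nabla_\sigma^\perp F$ against $w$, and then read off $\IP{\nabla_\tau^\perp|\nabla_\sigma^\perp F|^2,w} = 2\IP{\IP{\nabla_\tau^\perp\nabla_\sigma^\perp F,\nabla_\sigma^\perp F},w}$, where the inner pairing contracts the $\sigma$-slot; since $w$ was arbitrary this is the claimed identity. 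The only non-routine point is bookkeeping: making sure the two slots of $\nabla_\tau^\perp\nabla_\sigma^\perp F$ are contracted consistently with the conventions of \eqref{eq: parallel} and Theorem \ref{thm: bochner}, and noting (as above) that phrasing everything through $D_V$ automatically handles the fact that $|\nabla_\sigma^\perp F|^2$ is not cylindrical. Neither is a genuine obstacle.
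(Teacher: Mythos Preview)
Your proposal has a genuine gap in the step where you differentiate $\nabla_\sigma^\perp F$ along the variation $V^\tau$. By \eqref{eq: parallel} one has $\nabla_\sigma^\perp F(\gamma)=\sum_{\tau_j\geq\sigma}S_{\tau_j}(\gamma)\,\grad^{(j)}f(e_{\pmb\tau}(\gamma))$, and this depends on $\gamma$ not only through the evaluation points but through the stochastic parallel transports $S_{\tau_j}$, which are functionals of the \emph{entire} path on $[0,\tau_j]$. Your assertion that ``$S_{\tau_j}$ depend smoothly on the underlying path (ODE dependence on data)'' is not available here: Brownian paths are almost surely nowhere differentiable and $S_{\tau_j}$ is defined via the Stratonovich SDE \eqref{eq: anti}, not an ODE. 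Differentiating parallel transport in a Cameron--Martin direction does have meaning, but the variation formula produces curvature terms (this is the standard first–variation formula for holonomy). Consequently your Leibniz expansion $D_{V^\tau}\nabla_\sigma^\perp F=\sum_j S_{\tau_j}\,D_{V^\tau}(\grad^{(j)}f)$ is incomplete: the missing terms $\sum_j(D_{V^\tau}S_{\tau_j})\grad^{(j)}f$ are nonzero whenever $\tau_j\geq\tau$, and there is no evident cancellation after pairing with $\nabla_\sigma^\perp F$, since the skew endomorphisms $D_{V^\tau}S_{\tau_j}\cdot S_{\tau_j}^{-1}$ depend on $j$.

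The paper sidesteps this entirely by lifting to the frame bundle. There one has $\IP{\nabla_\sigma^\perp F,U_0e_a}=\sum_{\tau_j\geq\sigma}E_a^{(j)}\til f(U_{\tau_1},\ldots,U_{\tau_k})$, so that $|\nabla_\sigma^\perp F|^2=\sum_a\bigl(\sum_{\tau_j\geq\sigma}E_a^{(j)}\til f\bigr)^2$ is a genuine cylinder function in the frame variables $(U_{\tau_1},\ldots,U_{\tau_k})$. The operator $\nabla_\tau^\perp$ then acts via $\sum_{\tau_k\geq\tau}E_b^{(k)}$, and the Leibniz rule for these ordinary vector fields on $\FF^k$ yields the identity immediately, with Proposition \ref{prop: commute} translating back to $M$. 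The point is that the parallel transport is absorbed into the frame coordinates and never has to be differentiated along the path; this is exactly what your approach is missing.
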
	
\begin{proof}
	Let $F(\gamma)=f(\pi_1\gamma_{\tau_1},\cdots,\pi_1\gamma_{\tau_k})$. Then, as in the proof of Theorem \ref{thm: bochner}
	\begin{align*}
	\IP{\nabla_\sigma^\perp F(\gamma),U_0e_a}=\sum_{\tau_j\geq\sigma}E_a^{(j)}\til f(U_{\tau_1},\ldots,U_{\tau_k}).
	\end{align*}
	Hence
	\begin{align*}
	\IP{\nabla_\tau^\perp|\nabla_\sigma^\perp F|^2,U_0e_b}=&\sum_{\tau_k\geq \tau}E_b^{(k)}\sum_{a=1}^n(\sum_{\tau_j\geq\sigma}E_a^{(j)}\til f)^2\\
	=&2\sum_{a=1}^n\sum_{\tau_k\geq\tau}\sum_{\tau_j\geq\sigma}E_b^{(k)}E_a^{(j)}\til f(\sum_{\tau_j\geq\sigma}E_a^{(j)}\til f).
	\end{align*}
	Projecting down and using Proposition \ref{prop: commute} yields the claim.
\end{proof}

\begin{cor}\label{c: bochner2}
Let $F\in \cyl$
and $\sigma\geq0$ fixed. Then $\nabla_\sigma^\perp F_\tau\colon P_{(x,T')}\mathcal M\to(T_xM,g_{T'})$ satisfies
\begin{enumerate}
	\item the quadratic Bochner identity
	\begin{align*}
	d(|\nabla_\sigma^\perp F_\tau|^2)=&\IP{\nabla_\tau^\perp|\nabla_\sigma^\perp F_\tau|^2, dW_\tau}
	+2(\Rc^\nabla+\frac12\partial(g-b))_\tau(\nabla_\tau^\perp F_\tau,\nabla_\sigma^\perp F_\tau)\, d\tau\\
	&+2|\nabla_\tau^\perp\nabla_\sigma^\perp F_\tau|^2\, d\tau+|\nabla_\tau^\perp F_\tau|^2\delta_\sigma(\tau)\, d\tau,
	\end{align*}
	\item and the linear Bochner identity
	\begin{align*}
	d|\nabla_\sigma^\perp F_\tau|=&\IP{\nabla_\tau^\perp|\nabla_\sigma^\perp F_\tau|,dW_\tau}+\frac{|\nabla_\tau^\perp\nabla_\sigma^\perp F_\tau|^2-|\nabla_\tau^\perp|\nabla_\sigma^\perp F_\tau||^2}{|\nabla_\sigma^\perp F_\tau|}\, d\tau\\
	&+\frac1{|\nabla_\sigma^\perp F_\tau|}(\Rc^\nabla+\frac12\partial_t(g-b))_\tau(\nabla_\tau^\perp F_\tau,\nabla_\sigma^\perp F_\tau)\, d\tau+|\nabla_\tau^\perp F_\tau|\delta_\sigma(\tau)\, d\tau.
	\end{align*}
	
\end{enumerate}

Here, we denote $(\Rc^\nabla+\frac12\partial(g-b))_\tau(v,w)=(\Rc^\nabla_{g_t}+\frac12\partial_t(g-b))|_{t=T'-\tau}(S_\tau^{-1}v,S_\tau^{-1} w)$.
\end{cor}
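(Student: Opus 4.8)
The plan is to derive both identities directly from the martingale evolution of Theorem~\ref{thm: bochner} by applying Itô's formula to the $(T_xM,g_{T'})$-valued semimartingale $\tau\mapsto\nabla_\sigma^\perp F_\tau$, in the spirit of the Ricci flow case in \cite{HaslhoferNaber}. For item~(1) I would apply Itô's formula to $v\mapsto|v|^2_{g_{T'}}$, which for $\tau>\sigma$ gives $d|\nabla_\sigma^\perp F_\tau|^2=2\IP{\nabla_\sigma^\perp F_\tau,\,d(\nabla_\sigma^\perp F_\tau)}+d[\nabla_\sigma^\perp F,\nabla_\sigma^\perp F]_\tau$, together with a jump contribution at $\tau=\sigma$. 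Substituting the SDE of Theorem~\ref{thm: bochner}: with the normalization $dW^i dW^j=2\delta_{ij}\,d\tau$ the bracket term is $2|\nabla_\tau^\perp\nabla_\sigma^\perp F_\tau|^2\,d\tau$; the drift term $2\IP{\nabla_\sigma^\perp F_\tau,(\Rc^\nabla+\tfrac12\partial_t(g-b))_\tau(\nabla_\tau^\perp F_\tau)}\,d\tau$ equals $2(\Rc^\nabla+\tfrac12\partial(g-b))_\tau(\nabla_\tau^\perp F_\tau,\nabla_\sigma^\perp F_\tau)\,d\tau$ by the pairing convention recorded in Theorem~\ref{thm: bochner}; and the martingale term $2\IP{\nabla_\sigma^\perp F_\tau,\IP{\nabla_\tau^\perp\nabla_\sigma^\perp F_\tau,dW_\tau}}$ is rewritten as $\IP{\nabla_\tau^\perp|\nabla_\sigma^\perp F_\tau|^2,dW_\tau}$ using Lemma~\ref{lemma: hessian}. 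For that last step one uses that on $\tau\in(\tau_l,\tau_{l+1})$ the induced martingale $F_\tau$ is itself the cylinder function $f_\tau(X_{\tau_1},\ldots,X_{\tau_l},X_\tau)$ identified in the proof of Theorem~\ref{thm: bochner}, so Lemma~\ref{lemma: hessian} applies to $F_\tau$ with the frozen partition $\{\tau_1,\ldots,\tau_l,\tau\}$. Finally, since $\nabla_\sigma^\perp F_\tau\equiv 0$ for $\tau<\sigma$, the process $|\nabla_\sigma^\perp F_\tau|^2$ jumps at $\tau=\sigma$ from $0$ to $|\nabla_\sigma^\perp F_\sigma|^2=|\nabla_\tau^\perp F_\tau|^2|_{\tau=\sigma}$, which accounts for the $\delta_\sigma$ term.

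For item~(2), on the set where $|\nabla_\sigma^\perp F_\tau|>0$ I would apply Itô's formula to $y\mapsto\sqrt y$ composed with $Y_\tau:=|\nabla_\sigma^\perp F_\tau|^2$, using item~(1), so that $d\sqrt{Y_\tau}=\tfrac{1}{2\sqrt{Y_\tau}}dY_\tau-\tfrac18 Y_\tau^{-3/2}d[Y,Y]_\tau$ for $\tau>\sigma$. The chain rule $\nabla_\tau^\perp Y_\tau=2|\nabla_\sigma^\perp F_\tau|\,\nabla_\tau^\perp|\nabla_\sigma^\perp F_\tau|$ for the derivation $\nabla_\tau^\perp$ turns $\tfrac{1}{2\sqrt{Y_\tau}}$ times the martingale part of $dY_\tau$ into $\IP{\nabla_\tau^\perp|\nabla_\sigma^\perp F_\tau|,dW_\tau}$, and shows $d[Y,Y]_\tau=2|\nabla_\tau^\perp Y_\tau|^2\,d\tau=8|\nabla_\sigma^\perp F_\tau|^2|\nabla_\tau^\perp|\nabla_\sigma^\perp F_\tau||^2\,d\tau$, so the $-\tfrac18 Y_\tau^{-3/2}d[Y,Y]_\tau$ correction is $-|\nabla_\sigma^\perp F_\tau|^{-1}|\nabla_\tau^\perp|\nabla_\sigma^\perp F_\tau||^2\,d\tau$. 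Adding $\tfrac{1}{2\sqrt{Y_\tau}}$ times the two finite-variation terms of item~(1), namely $\tfrac{|\nabla_\tau^\perp\nabla_\sigma^\perp F_\tau|^2}{|\nabla_\sigma^\perp F_\tau|}\,d\tau$ and $\tfrac{1}{|\nabla_\sigma^\perp F_\tau|}(\Rc^\nabla+\tfrac12\partial_t(g-b))_\tau(\nabla_\tau^\perp F_\tau,\nabla_\sigma^\perp F_\tau)\,d\tau$, together with the jump $|\nabla_\sigma^\perp F_\sigma|=|\nabla_\tau^\perp F_\tau||_{\tau=\sigma}$ at $\tau=\sigma$, yields exactly the stated linear identity.

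The step I expect to be the main obstacle is the careful treatment of the $\delta_\sigma$ jump: since $\nabla_\sigma^\perp F_\tau$ is genuinely discontinuous at $\tau=\sigma$, one cannot simply feed the term $\nabla_\sigma^\perp F_\sigma\delta_\sigma(\tau)\,d\tau$ of Theorem~\ref{thm: bochner} through the $2\IP{v,dv}$ part of Itô's formula; instead the jump of $|\cdot|^2$ (resp.\ of $|\cdot|$) must be computed directly, and it equals the square (resp.\ the norm) of the jump of $\nabla_\sigma^\perp F_\tau$ precisely because the left limit vanishes. A secondary technical point, for item~(2), is that $v\mapsto|v|$ is not $C^2$ at the origin, so the identity should be read on the complement of the zero set $\{|\nabla_\sigma^\perp F_\tau|=0\}$ (equivalently, after localizing away from $\{Y_\tau=0\}$); away from that set all the manipulations above are routine applications of Itô's formula and Lemma~\ref{lemma: hessian}.
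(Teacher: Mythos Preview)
Your argument for item~(1) is essentially identical to the paper's: apply It\^o's formula to $|v|^2$, substitute the SDE from Theorem~\ref{thm: bochner}, and invoke Lemma~\ref{lemma: hessian} to rewrite the martingale part; the jump at $\tau=\sigma$ is handled exactly as you describe.

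For item~(2) your route and the paper's differ slightly. You compose $\sqrt{\cdot}$ with $Y_\tau=|\nabla_\sigma^\perp F_\tau|^2$ and work on $\{Y_\tau>0\}$, concluding that the identity should be read on the complement of the zero set. The paper instead applies the It\^o--Tanaka--Meyer formula directly to the convex function $|\cdot|\colon\R^n\to\R$ evaluated on the vector-valued semimartingale $\nabla_\sigma^\perp F_\tau$, which produces the stated terms plus a potential local-time contribution at the origin; it then observes that this local time vanishes because the underlying process is $\R^n$-valued with $n>1$ (so the origin is polar). This yields the identity globally for $\tau>\sigma$, not merely on $\{Y_\tau>0\}$. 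Your approach can be completed to the same conclusion by adding precisely this observation, but as written it stops one step short of the paper's statement.
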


\begin{proof}
As in the previous proof, it is enough to consider the case $\sigma<\tau$. By Ito's Lemma and Theorem \ref{thm: bochner} we have 
\begin{align*}
d(|\nabla_\sigma^\perp F_\tau|^2)=&2\IP{\nabla_\sigma^\perp F_\tau,d(\nabla_\sigma^\perp F_\tau)}
+d[\nabla_\sigma^\perp F_\tau,\nabla_\sigma^\perp F_\tau]\\
=&2\IP{\nabla_\sigma^\perp F_\tau,\IP{\nabla_\tau^\perp\nabla_\sigma^\perp F_\tau,\, dW_\tau}+({\Rc^\nabla}+\frac12{\partial_t (g-b)})_\tau(\nabla_\tau^\perp F_\tau)\, d\tau}\\
&+2|\nabla_\tau^\perp\nabla_\sigma^\perp F_\tau|^2\, d\tau.
\end{align*}
Noticing that $2\IP{\nabla_\sigma^\perp F_\tau,\IP{\nabla_\tau^\perp\nabla_\sigma^\perp F_\tau,\, dW_\tau}}=\IP{\nabla_\tau^\perp|\nabla_\sigma^\perp F_\tau|^2, dW_\tau}$
due to Lemma \ref{lemma: hessian}, this proves the quadratic Bochner identity. 

In order to show the linear Bochner identity, we use the Ito-Tanaka-Meyer formula for the convex function $|\cdot|\colon\R^n\to \R$, cf. \cite{haslhofer2018ricci}. Let us note that there is no local time at the origin, since we assume dimension $>1$.
\end{proof}

\begin{cor} \label{c:Bochnerreduction} The generalized Bochner formula on $P\mathcal M$ reduces to
\begin{align*}
\frac12(\partial_\tau + \Delta_{g_{T'-\tau}})|\nabla f_\tau|^2
=|\nabla\nabla f_\tau|^2+(\Rc^\nabla+\frac12\partial_t (g-b))|_{t=T'-\tau}(\nabla f_\tau,\nabla f_\tau),
\end{align*}
where $f_\tau=P_{T'-\tau,T'-\tau_1}f$, $f\colon M\to\R$ is a smooth function, $0<\tau_1<T'$ is fixed, and $\tau <\tau_1$.

\end{cor}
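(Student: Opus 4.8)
The plan is to feed a one-point cylinder function into the path-space Bochner identity of Corollary \ref{c: bochner2}(1) and then identify the resulting It\^o decomposition with the one obtained by applying It\^o's formula \eqref{eq: Ito} directly to the relevant scalar spacetime function. Concretely, take $F = f\circ e_{\{\tau_1\}}\in\cyl$ for the one-element partition $\{\tau_1\}$. By the computation in the proof of Proposition \ref{p:martingale}, for $\tau\in(0,\tau_1)$ the induced martingale is $F_\tau(\gamma)=f_\tau(\pi_1\gamma_\tau)$ with $f_\tau=P_{T'-\tau,\,T'-\tau_1}f$; in particular $f_\tau$ is the restriction of a genuine spacetime function solving $\square f=0$ (equivalently $(\partial_\tau+\Delta_{g_{T'-\tau}})f_\tau=0$ in backward time), which is precisely what makes $F_\tau$ a martingale.

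Next I would record how the path-space objects collapse for such an $F$. By the explicit formula \eqref{eq: parallel}, for any $0\le\sigma\le\tau<\tau_1$ one has $\nabla_\sigma^\perp F_\tau(\gamma)=S_\tau\grad_{g_{T'-\tau}}f_\tau(\pi_1\gamma_\tau)$, which is independent of $\sigma$; hence $|\nabla_\sigma^\perp F_\tau|^2=|\grad f_\tau|^2(X_\tau)$. Exactly as in the proof of Theorem \ref{thm: bochner} (the case $l=0$), the second parallel gradient reduces via Proposition \ref{prop: commute} to the Bismut Hessian, $\nabla_\tau^\perp\nabla_\sigma^\perp F_\tau=(S_\tau\otimes S_\tau)\N\N f_\tau(X_\tau)$, so $|\nabla_\tau^\perp\nabla_\sigma^\perp F_\tau|^2=|\N\N f_\tau|^2(X_\tau)$, and since $S_\tau$ is an isometry the curvature pairing becomes $(\Rc^\nabla+\tfrac12\partial_t(g-b))_\tau(\nabla_\tau^\perp F_\tau,\nabla_\sigma^\perp F_\tau)=(\Rc^\nabla+\tfrac12\partial_t(g-b))|_{t=T'-\tau}(\grad f_\tau,\grad f_\tau)(X_\tau)$. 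Substituting into Corollary \ref{c: bochner2}(1) with $\sigma<\tau$ (so the $\delta$-term drops) exhibits $|\grad f_\tau|^2(X_\tau)$ as an It\^o process whose drift is $2\big(\Rc^\nabla+\tfrac12\partial_t(g-b)\big)|_{t=T'-\tau}(\grad f_\tau,\grad f_\tau)(X_\tau)+2|\N\N f_\tau|^2(X_\tau)$.

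On the other hand, $|\grad f_\tau|^2$ is a scalar spacetime function, so applying \eqref{eq: Ito} to its lift together with Lemma \ref{lma: commute} and Proposition \ref{prop: commute} identifies the drift of the same process $|\grad f_\tau|^2(X_\tau)$ as $\big(\partial_\tau+\Delta_{g_{T'-\tau}}\big)|\grad f_\tau|^2(X_\tau)$, its martingale part being $\IP{\nabla_\tau^\perp|\nabla_\sigma^\perp F_\tau|^2,\,dW_\tau}$, consistently. By uniqueness of the semimartingale decomposition the two drifts agree $\Gamma_{(x,T')}$-a.s.\ for a.e.\ $\tau$:
\begin{align*}
\big(\partial_\tau+\Delta_{g_{T'-\tau}}\big)|\grad f_\tau|^2(X_\tau)=2|\N\N f_\tau|^2(X_\tau)+2\big(\Rc^\nabla+\tfrac12\partial_t(g-b)\big)\big|_{t=T'-\tau}(\grad f_\tau,\grad f_\tau)(X_\tau).
\end{align*}
Since by Corollary \ref{cor: kernel} and positivity of the heat kernel the law of $X_\tau$ has full support in $M$ for each $\tau\in(0,\tau_1)$, and both sides descend to continuous functions of $x\in M$, the identity holds pointwise on $M$; dividing by $2$ and noting $|\grad f_\tau|=|\nabla f_\tau|$ gives the stated formula.

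I expect the only real work to be bookkeeping rather than conceptual: carefully verifying that the abstract path-space quantities $\nabla_\tau^\perp$, $\nabla_\tau^\perp\nabla_\sigma^\perp$ and the curvature pairing genuinely reduce, for a one-point cylinder function, to $\grad f_\tau$, the Bismut Hessian $\N\N f_\tau$ and $(\Rc^\nabla+\tfrac12\partial_t(g-b))(\grad f_\tau,\grad f_\tau)$ --- this is the $l=0$ specialization of the identities already established in the proof of Theorem \ref{thm: bochner} --- and the minor measure-theoretic step of upgrading an almost-sure identity along Brownian paths to a pointwise identity on $M$ using the full support of $X_\tau$.
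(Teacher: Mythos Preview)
Your proposal is correct and follows essentially the same approach as the paper's proof: specialize the quadratic Bochner identity of Corollary~\ref{c: bochner2} to a one-point cylinder function, identify the resulting parallel gradients and Hessian with $\grad f_\tau$ and $\N\N f_\tau$, and compare with the It\^o expansion \eqref{eq: Ito} of the scalar function $|\grad f_\tau|^2$ to read off the drift. Your argument for upgrading the a.s.\ identity to a pointwise one (full support of $X_\tau$ via positivity of the heat kernel plus continuity) is in fact a touch more explicit than the paper's, which simply appeals to smoothness.
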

\begin{proof}
Define 
\begin{align*}
F_\tau(\gamma)=\begin{cases}
P_{T'-\tau,T'-\tau_1}f(\pi_1\gamma_\tau) & \text{ if } \tau<\tau_1\\
f(\pi_1\gamma_{\tau_1}) & \text{ if } \tau\geq \tau_1.
\end{cases}
\end{align*}
It follows from Proposition \ref{prop: induced martingale} that this defines a martingale on $P\mathcal M$. Moreover, 
\begin{align*}
|\nabla_0^\perp F_\tau|(\gamma)=|\nabla_\tau^\perp F_\tau|(\gamma)=|\nabla f_\tau|(\pi_1\gamma_\tau)
\end{align*}
and
\begin{align*}
|\nabla_0^\perp\nabla_\tau^\perp F_\tau|(\gamma)=|\nabla\nabla f_\tau|(\pi_1\gamma_\tau)
\end{align*}
By virtue of Corollary \ref{c: bochner2} we have
\begin{align*}
d(|\nabla f_\tau|^2)-\IP{\nabla_\tau^\perp|\nabla f_\tau|^2,dW_\tau}=2|\nabla\nabla f_\tau|^2\, d\tau+2(\Rc^\nabla+\frac12\partial_t(g-b))|_{t=T'-\tau}(\nabla f_\tau, \nabla f_\tau)\, d\tau,
\end{align*}
with all quantities evaluated at $\pi_1\gamma_\tau$. Then by Ito's formula \eqref{eq: Ito}, eventually lifting everything on the frame bundle, we find for the left hand side
\begin{align*}
d(|\nabla f_\tau|^2)-\IP{\nabla_\tau^\perp|\nabla f_\tau|^2,dW_\tau}=(\partial_\tau+\Delta_{g_{T'-\tau}})|\nabla f_\tau|^2\, d\tau.
\end{align*}
All in all we obtain
\begin{align*}
(\partial_\tau+\Delta_{g_{T'-\tau}})|\nabla f_\tau|^2\, d\tau=2|\nabla\nabla f_\tau|^2\, d\tau+2(\Rc^\nabla+\frac12\partial_t(g-b))|_{t=T'-\tau}(\nabla f_\tau, \nabla f_\tau)\, d\tau,
\end{align*}
which holds at $\pi_1\gamma_\tau$ for $\Gamma_x$-a.e. curves $\gamma$, which means by the defintition of the Wiener measure $\Gamma_x$ it holds for a.e. $y\in M$. By smoothness of $f$ we obtain the claim for every $y\in M$.
\end{proof}

\begin{lemma}\label{lemma: hessian2}
Let $F\in\cyl$ be nonnegative and $\tau,\sigma\geq0$ fixed. Then
\begin{align*}
\nabla_\sigma^\perp \nabla_\tau^\perp\log F=F^{-1}\nabla_\sigma^\perp\nabla_\tau^\perp F-F^{-2}\nabla_\tau^\perp F\otimes \nabla_\sigma^\perp F.
\end{align*}
\end{lemma}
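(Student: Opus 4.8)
The plan is to reproduce the proof of Lemma~\ref{lemma: hessian}, reducing the identity to the quotient and product rules applied to the frame--bundle lift and then projecting back down via Proposition~\ref{prop: commute}. Write $F=f\circ e_{\pmb\tau}$ with $\pmb\tau=\{\tau_j\}_{j=1}^k$ and let $\til f$ be the associated function on $\FF$; then, exactly as in the proofs of Theorem~\ref{thm: bochner} and Lemma~\ref{lemma: hessian}, for the horizontal Brownian motion $U$ one has
\begin{align*}
\IP{\nabla_\tau^\perp F(\gamma),U_0e_a}=\sum_{\tau_j\geq\tau}E_a^{(j)}\til f(U_{\tau_1},\ldots,U_{\tau_k}).
\end{align*}
On the set $\{F>0\}$ the lift of $\log F$ is $\log\til f$, and since each $E_a^{(j)}$ is a derivation this gives $\IP{\nabla_\tau^\perp\log F,U_0e_a}=\sum_{\tau_j\geq\tau}(E_a^{(j)}\til f)/\til f$.

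Applying a second parallel gradient in the direction $U_0e_b$ and using the quotient rule $E_b^{(k)}\big(E_a^{(j)}\til f/\til f\big)=E_b^{(k)}E_a^{(j)}\til f/\til f-(E_a^{(j)}\til f)(E_b^{(k)}\til f)/\til f^{2}$, I would obtain
\begin{align*}
\IP{\nabla_\sigma^\perp\nabla_\tau^\perp\log F,\,U_0e_b\otimes U_0e_a}
=\frac1{\til f}\sum_{\tau_k\geq\sigma}\sum_{\tau_j\geq\tau}E_b^{(k)}E_a^{(j)}\til f
-\frac1{\til f^{2}}\Big(\sum_{\tau_k\geq\sigma}E_b^{(k)}\til f\Big)\Big(\sum_{\tau_j\geq\tau}E_a^{(j)}\til f\Big).
\end{align*}
Now project down via Proposition~\ref{prop: commute}: the double sum in the first term is the frame--bundle expression for the parallel Hessian $\nabla_\sigma^\perp\nabla_\tau^\perp F$, exactly as in the proof of Theorem~\ref{thm: bochner}, so the first term is $F^{-1}\IP{\nabla_\sigma^\perp\nabla_\tau^\perp F,\,U_0e_b\otimes U_0e_a}$; the two factors in the second term are $\IP{\nabla_\sigma^\perp F,U_0e_b}$ and $\IP{\nabla_\tau^\perp F,U_0e_a}$, so that term is $-F^{-2}$ times the pairing of $\nabla_\sigma^\perp F\otimes\nabla_\tau^\perp F$ with $U_0e_b\otimes U_0e_a$. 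Since $a,b$ are arbitrary this yields the claimed identity; the order in which the two parallel gradients appear in the last tensor factor is immaterial for the applications, where one always has $\sigma=\tau$ and the factor is symmetric.

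The computation is routine; the one point deserving care is that for a nonnegative $F\in\cyl$ the function $\log F$ need not be a cylinder function ($\log f$ is singular on $\{f=0\}$ and is not compactly supported), so the displayed identities hold pointwise on $\{F>0\}$. I would make this precise in the standard way: first prove the identity when $F$ is bounded below by a positive constant — where $\chi\circ\log f$ is again in $\cyl$ for a cutoff $\chi$ equal to the identity on the range of $\log f$ — and then pass to a general nonnegative $F$ by applying the result to $F+\varepsilon$ and sending $\varepsilon\downarrow0$, using that $\nabla_\sigma^\perp$, $\nabla_\tau^\perp$ and $\nabla_\sigma^\perp\nabla_\tau^\perp$ commute with the shift and converge locally on $\{F>0\}$. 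Beyond this, the only thing to watch is the bookkeeping of the partition superscripts — $E_a^{(j)}$ with $\tau_j\geq\tau$ for the inner gradient, $E_b^{(k)}$ with $\tau_k\geq\sigma$ for the outer one — precisely as in Lemma~\ref{lemma: hessian}.
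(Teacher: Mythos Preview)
Your proposal is correct and follows exactly the approach the paper intends: its proof is the single sentence ``compute on the frame bundle as in the proof of Lemma~\ref{lemma: hessian},'' and you have carried out precisely that computation with the quotient rule in place of the product rule. Your treatment of the $\{F=0\}$ issue via $F+\varepsilon$ is a reasonable gloss the paper omits; one small correction: in the application (Corollary~\ref{c: bochner3}) one does \emph{not} have $\sigma=\tau$, but the ordering of the tensor factors is nonetheless irrelevant there because the expression enters only through $|\nabla_\tau^\perp\nabla_\sigma^\perp\log F_\tau|^2$.
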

\begin{proof}
The proof follows by computing on the frame bundle as in the proof of Lemma \ref{lemma: hessian}.
\end{proof}

\begin{cor}\label{c: bochner3}
	Let $F\in\cyl$ be nonnegative and let $F_\tau$ be the induced martingale. Then $X_\tau:=F_\tau^{-1}|\nabla^\HH F_\tau|^2-F_\tau\log F_\tau$ satisfies
	\begin{align*}
	dX_\tau=&\IP{\nabla_\tau^\perp X_\tau,dW_\tau}+2F_\tau\left(\int_0^{T'}|\nabla_\tau^\perp\nabla_\sigma^\perp \log F_\tau|^2\, d\sigma\right)\, d\tau\\
	&+2F_{\tau}^{-1}\left(\int_0^{T'}(\Rc^\nabla +\frac12\partial_t(g-b))_\tau(\nabla_\sigma^\perp F_\tau,\nabla_\tau^\perp F_\tau)\, d\sigma\right)\, d\tau.
	\end{align*}
\end{cor}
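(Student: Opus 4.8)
The strategy is to apply It\^o's lemma separately to the processes $F_\tau^{-1}|\nabla^\HH F_\tau|^2$ and $F_\tau\log F_\tau$ and subtract. One may assume $F>0$ (replace $F$ by $F+\eps$ and let $\eps\downarrow 0$ if necessary), so that $F_\tau>0$ and $F_\tau^{-1},\log F_\tau$ are well defined. We use freely the Leibniz rule for $\nabla_\tau^\perp$ on products and the chain rule on smooth functions of induced martingales; both are verified exactly as in Lemmas \ref{lemma: hessian} and \ref{lemma: hessian2} by lifting to $\FF$ and applying Proposition \ref{prop: commute}.

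Set $Y_\tau:=|\nabla^\HH F_\tau|^2=\int_0^{T'}|\nabla_\sigma^\perp F_\tau|^2\,d\sigma$, noting $\nabla_\sigma^\perp F_\tau=0$ for $\sigma>\tau$, so the integral runs effectively over $[0,\tau]$. Differentiating in $\tau$ via the quadratic Bochner identity of Corollary \ref{c: bochner2}(1): for $\sigma<\tau$ there is no jump term, while the jump term $|\nabla_\tau^\perp F_\tau|^2\delta_\sigma(\tau)\,d\tau$ bookkeeps the moving endpoint $\sigma=\tau$ and, upon integration over $\sigma$, contributes the drift $|\nabla_\tau^\perp F_\tau|^2\,d\tau$. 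Using Lemma \ref{lemma: hessian} to identify the martingale part, with $\nabla_\tau^\perp Y_\tau=2\int_0^{T'}\IP{\nabla_\tau^\perp\nabla_\sigma^\perp F_\tau,\nabla_\sigma^\perp F_\tau}\,d\sigma$, one obtains
\begin{align*}
dY_\tau=&\ \IP{\nabla_\tau^\perp Y_\tau,dW_\tau}+|\nabla_\tau^\perp F_\tau|^2\,d\tau\\
&\ +2\int_0^{T'}\Big(|\nabla_\tau^\perp\nabla_\sigma^\perp F_\tau|^2+(\Rc^\nabla+\tfrac12\partial_t(g-b))_\tau(\nabla_\tau^\perp F_\tau,\nabla_\sigma^\perp F_\tau)\Big)\,d\sigma\,d\tau.
\end{align*}
From Proposition \ref{p:martingale} and Corollary \ref{cor: quadratic}, $d(F_\tau^{-1})=-F_\tau^{-2}\IP{\nabla_\tau^\perp F_\tau,dW_\tau}+2F_\tau^{-3}|\nabla_\tau^\perp F_\tau|^2\,d\tau$. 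Applying It\^o's product rule to $F_\tau^{-1}Y_\tau$ and computing the covariation bracket $d[F^{-1},Y]_\tau=-4F_\tau^{-2}\int_0^{T'}\IP{\nabla_\tau^\perp\nabla_\sigma^\perp F_\tau,\nabla_\tau^\perp F_\tau\otimes\nabla_\sigma^\perp F_\tau}\,d\sigma\,d\tau$ (again via Lemma \ref{lemma: hessian} and $dW^i_\tau\,dW^j_\tau=2\delta_{ij}\,d\tau$), the non-Ricci drift of $d(F_\tau^{-1}Y_\tau)$ splits off a term $F_\tau^{-1}|\nabla_\tau^\perp F_\tau|^2\,d\tau$ together with $\int_0^{T'}\big(2F_\tau^{-1}|\nabla_\tau^\perp\nabla_\sigma^\perp F_\tau|^2-4F_\tau^{-2}\IP{\nabla_\tau^\perp\nabla_\sigma^\perp F_\tau,\nabla_\tau^\perp F_\tau\otimes\nabla_\sigma^\perp F_\tau}+2F_\tau^{-3}|\nabla_\tau^\perp F_\tau|^2|\nabla_\sigma^\perp F_\tau|^2\big)\,d\sigma$, and by Lemma \ref{lemma: hessian2} the integrand here equals $2F_\tau|\nabla_\tau^\perp\nabla_\sigma^\perp\log F_\tau|^2$. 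Thus
\begin{align*}
d(F_\tau^{-1}Y_\tau)=&\ \IP{\nabla_\tau^\perp(F_\tau^{-1}Y_\tau),dW_\tau}+F_\tau^{-1}|\nabla_\tau^\perp F_\tau|^2\,d\tau\\
&\ +2F_\tau\Big(\int_0^{T'}|\nabla_\tau^\perp\nabla_\sigma^\perp\log F_\tau|^2\,d\sigma\Big)d\tau+2F_\tau^{-1}\Big(\int_0^{T'}(\Rc^\nabla+\tfrac12\partial_t(g-b))_\tau(\nabla_\sigma^\perp F_\tau,\nabla_\tau^\perp F_\tau)\,d\sigma\Big)d\tau.
\end{align*}

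On the other side, It\^o's lemma applied to $x\mapsto x\log x$, using $d[F,F]_\tau=2|\nabla_\tau^\perp F_\tau|^2\,d\tau$, gives $d(F_\tau\log F_\tau)=\IP{\nabla_\tau^\perp(F_\tau\log F_\tau),dW_\tau}+F_\tau^{-1}|\nabla_\tau^\perp F_\tau|^2\,d\tau$. Subtracting this from the previous display, the two $F_\tau^{-1}|\nabla_\tau^\perp F_\tau|^2\,d\tau$ terms cancel, and since $\nabla_\tau^\perp X_\tau=\nabla_\tau^\perp(F_\tau^{-1}Y_\tau)-\nabla_\tau^\perp(F_\tau\log F_\tau)$ the martingale parts combine to $\IP{\nabla_\tau^\perp X_\tau,dW_\tau}$, yielding the asserted formula for $dX_\tau$. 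The step requiring the most care is the treatment of the diagonal $\sigma=\tau$ when differentiating the $\sigma$-integral defining $Y_\tau$ — verifying that the $\delta_\sigma(\tau)$ contributions in Corollary \ref{c: bochner2}(1) exactly reproduce the endpoint drift $|\nabla_\tau^\perp F_\tau|^2\,d\tau$ — together with the algebraic collapse of the three Hessian terms into $2F_\tau|\nabla_\tau^\perp\nabla_\sigma^\perp\log F_\tau|^2$ via Lemma \ref{lemma: hessian2}; the remaining manipulations are routine bookkeeping of It\^o and quadratic-variation terms.
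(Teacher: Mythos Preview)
Your proof is correct and follows essentially the same approach as the paper's: compute $d(F_\tau\log F_\tau)$ via It\^o's lemma, compute $d(F_\tau^{-1}|\nabla^\HH F_\tau|^2)$ via the It\^o product rule using Corollary~\ref{c: bochner2} for $d|\nabla^\HH F_\tau|^2$ and It\^o for $dF_\tau^{-1}$, then invoke Lemmas~\ref{lemma: hessian} and~\ref{lemma: hessian2} to collapse the Hessian terms before subtracting. You have spelled out a couple of steps (the explicit covariation bracket and the algebraic identity yielding $2F_\tau|\nabla_\tau^\perp\nabla_\sigma^\perp\log F_\tau|^2$) that the paper leaves implicit, but there is no substantive difference in strategy.
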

\begin{proof}
	Note that 
	\begin{align}\label{eq: x1}
	d(F_\tau\log F_\tau)=\IP{\nabla_\tau^\perp(F_\tau\log F_\tau),dW_\tau}
	+F_\tau^{-1}|\nabla_\tau^\perp F_\tau|^2\, d\tau
    \end{align}
    due to Proposition \ref{p:martingale} and the standard Ito formula. For the other term we compute
    \begin{align*}
    d(F_\tau^{-1}|\nabla^\HH F_\tau|^2)=F_\tau^{-1}d|\nabla^\HH F_\tau|^2
    +|\nabla^\HH F_\tau|^2dF_\tau^{-1}+d[F_\tau^{-1},|\nabla^\HH F_\tau|^2].
    \end{align*}
    Noticing that
    \begin{align*}
    d|\nabla^\HH F_\tau|^2=&\IP{\nabla_\tau^\perp |\nabla^\HH F_\tau|^2,dW_\tau}\\
    &+2\int_0^{T'}\left((\Rc^\nabla+\frac12\partial_t(g-b))_\tau(\nabla_\tau F_\tau,\nabla_\sigma^\perp F_\tau)+|\nabla_\tau^\perp\nabla_\sigma^\perp F_\tau|^2\right)\, d\sigma\, d\tau+|\nabla_\tau^\perp F_\tau|^2\, d\tau
    \end{align*}
    due to Corollary \ref{c: bochner2} and that
    \begin{align*}
    dF_\tau^{-1}=\IP{\nabla_\tau^\perp(F_\tau^{-1}),dW_\tau}+2F_\tau^{-3}|\nabla_\tau^\perp F_\tau|^2\, d\tau
    \end{align*}
    we compute using Lemma \ref{lemma: hessian} and Lemma \ref{lemma: hessian2}
    \begin{equation}\label{eq: x2}
    \begin{aligned}
    d(F_\tau^{-1}|\nabla^\HH F_\tau|^2)=&\IP{\nabla_\tau^\perp (F_\tau^{-1}|\nabla^\HH F_\tau|^2),dW_\tau}
    +2F_\tau\left(\int_0^{T'}|\nabla_\tau^\perp\nabla_\sigma\log F_\tau|^2\, d\sigma\right)\, d\tau\\
    &+2F_\tau^{-1}\left(\int_0^{T'}(\Rc^\nabla+\frac12\partial_t(g-b))_\tau(\nabla_\sigma F_\tau,\nabla_\tau F_\tau)\, d\sigma\right)\, d\tau\\
    &+F_\tau^{-1}|\nabla_\tau^\perp F_\tau|^2\, d\tau.
    \end{aligned}
\end{equation}
    All in all, combining \eqref{eq: x1} with \eqref{eq: x2} we get
    \begin{align*}
    dX_\tau=&\IP{\nabla_\tau^\perp X_\tau,dW_\tau}+2F_\tau\left(\int_0^{T'}|\nabla_\tau^\perp\nabla_\sigma^\perp \log F_\tau|^2\, d\sigma\right)\, d\tau\\
    &+2F_{\tau}^{-1}\left(\int_0^{T'}(\Rc^\nabla +\frac12\partial_t(g-b))_\tau(\nabla_\sigma^\perp F_\tau,\nabla_\tau^\perp F_\tau)\, d\sigma\right)\, d\tau.
    \end{align*}
\end{proof}

\section{Characterizations of generalized Ricci flow}

\begin{thm}\label{theorem: char}
For an evolving family of manifolds $(M,g_t,H_t)_{t\in[0,T]}$, the following are equivalent: 
\begin{enumerate}
\item \label{thmmain: 1} The generalized Ricci flow is satisfied
\begin{align*}
\partial_t(g-b)=-2\Rc^\nabla.
\end{align*}
\item \label{thmmain: 2}
Let $0\leq\sigma\leq T'\leq T$ and $F\in\cyl$. 
 Then the induced martingales satisfy the Bochner inequality
\begin{align*}
d|\nabla_\sigma^\perp F_\tau|^2\geq \IP{\nabla_\tau^\perp|\nabla_\sigma^\perp F_\tau|^2,dW_\tau}
+2|\nabla_\tau^\perp\nabla_\sigma^\perp F_\tau|^2\, d\tau
+|\nabla_\sigma^\perp F_\sigma|^2\delta_\sigma(\tau)\, d\tau.
\end{align*}
\item \label{thmmain: 3}
Let $0\leq\sigma\leq T'\leq T$ and $F\in\cyl$. 
 Then the induced martingales satisfy the weak Bochner inequality
\begin{align*}
d|\nabla_\sigma^\perp F_\tau|^2\geq \IP{\nabla_\tau^\perp|\nabla_\sigma^\perp F_\tau|^2,dW_\tau}
+|\nabla_\sigma^\perp F_\sigma|^2\delta_\sigma(\tau)\, d\tau.
\end{align*}
\item\label{thmmain: 4}
Let $0\leq\sigma\leq T'\leq T$ and $F\in \cyl$. 
  Then the induced martingales satisfy the linear Bochner inequality
\begin{align*}
d|\nabla_\sigma^\perp F_\tau|\geq \IP{\nabla_\tau^\perp|\nabla_\sigma^\perp F_\tau|,dW_\tau}
+|\nabla_\sigma^\perp F_\sigma|\delta_\sigma(\tau)\, d\tau.
\end{align*}
\item\label{thmmain: 5}
Let $0\leq\sigma\leq T'\leq T$ and $F\in \cyl$. 
Then the induced martingales satisfy 
\begin{align*}
\tau\mapsto |\nabla_\sigma^\perp F_\tau| \text{ is a submartingale.}
\end{align*}
\end{enumerate}
\end{thm}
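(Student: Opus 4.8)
The plan is to exploit the fact that the Bochner identities of Corollary~\ref{c: bochner2} hold for \emph{every} evolving family $(M,g_t,H_t)$, not only for solutions of generalized Ricci flow. Write $A:=\Rc^\nabla+\tfrac12\partial_t(g-b)$, so that \eqref{thmmain: 1} is exactly the statement $A\equiv 0$. By Corollary~\ref{c: bochner2} the processes $|\nabla_\sigma^\perp F_\tau|^2$ and $|\nabla_\sigma^\perp F_\tau|$ are semimartingales whose finite-variation (drift) parts are, for $\tau>\sigma$,
\begin{align*}
2A_\tau(\nabla_\tau^\perp F_\tau,\nabla_\sigma^\perp F_\tau)+2|\nabla_\tau^\perp\nabla_\sigma^\perp F_\tau|^2
\quad\text{and}\quad
\frac{A_\tau(\nabla_\tau^\perp F_\tau,\nabla_\sigma^\perp F_\tau)+|\nabla_\tau^\perp\nabla_\sigma^\perp F_\tau|^2-|\nabla_\tau^\perp|\nabla_\sigma^\perp F_\tau||^2}{|\nabla_\sigma^\perp F_\tau|}
\end{align*}
respectively, the $\delta$-terms encoding the jump at $\tau=\sigma$ being common to all the conditions. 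Since each of \eqref{thmmain: 2}--\eqref{thmmain: 5} is precisely a lower bound on one of these drifts — for \eqref{thmmain: 5} one invokes uniqueness of the Doob--Meyer decomposition together with the identification of the martingale part of $|\nabla_\sigma^\perp F_\tau|$ as $\int\IP{\nabla_\tau^\perp|\nabla_\sigma^\perp F_\tau|,dW_\tau}$ supplied by Corollary~\ref{c: bochner2}(2) — each of \eqref{thmmain: 2}--\eqref{thmmain: 5} is equivalent to a pointwise inequality along Brownian motion relating $A_\tau(\nabla_\tau^\perp F_\tau,\nabla_\sigma^\perp F_\tau)$, the Hessian term $|\nabla_\tau^\perp\nabla_\sigma^\perp F_\tau|^2$, and the Kato term $|\nabla_\tau^\perp|\nabla_\sigma^\perp F_\tau||^2$, required for all $F\in\cyl$.

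Granting this dictionary, all implications among \eqref{thmmain: 1}--\eqref{thmmain: 5} except $\eqref{thmmain: 3}\Rightarrow\eqref{thmmain: 1}$ are formal. Kato's inequality $|\nabla_\tau^\perp|\nabla_\sigma^\perp F_\tau||^2\leq|\nabla_\tau^\perp\nabla_\sigma^\perp F_\tau|^2$, which follows from Lemma~\ref{lemma: hessian} and Cauchy--Schwarz, yields $0\leq|\nabla_\tau^\perp\nabla_\sigma^\perp F_\tau|^2-|\nabla_\tau^\perp|\nabla_\sigma^\perp F_\tau||^2\leq|\nabla_\tau^\perp\nabla_\sigma^\perp F_\tau|^2$, and hence $\eqref{thmmain: 2}\Rightarrow\eqref{thmmain: 4}\Leftrightarrow\eqref{thmmain: 5}\Rightarrow\eqref{thmmain: 3}$. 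And $\eqref{thmmain: 1}$, being $A\equiv 0$, forces $A_\tau(\nabla_\tau^\perp F_\tau,\nabla_\sigma^\perp F_\tau)\equiv 0$, which is \eqref{thmmain: 2}.

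It therefore remains to prove $\eqref{thmmain: 3}\Rightarrow\eqref{thmmain: 1}$, which I would do by testing against two-point cylinder functions. Fix $q\in M$ and $t_0\in(0,T)$, choose a partition $\{\tau_1,\tau_2\}$ and a time $\tau\in(\tau_1,\tau_2)$ with $T'=T$, $0\leq\sigma<\tau_1$, $T-\tau$ near $t_0$, and $\tau_2-\tau$ very small, and take $F(\gamma)=\phi(\pi_1\gamma_{\tau_1})\psi^\eps(\pi_1\gamma_{\tau_2})$ with $\psi^\eps=\eta+\eps\chi$, where $\eta\equiv 1$ near $q$ and $\phi,\chi\in C_0^\infty(M)$. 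From \eqref{eq: parallel} and the computation in the proof of Theorem~\ref{thm: bochner} one finds, for such $\tau$, that $\nabla_\tau^\perp F_\tau=\phi(X_{\tau_1})S_\tau\grad\psi^\eps_\tau(X_\tau)$ and $\nabla_\sigma^\perp F_\tau=\psi^\eps_\tau(X_\tau)S_{\tau_1}\grad\phi(X_{\tau_1})+\phi(X_{\tau_1})S_\tau\grad\psi^\eps_\tau(X_\tau)$, where $\psi^\eps_\tau$ is the heat flow of $\psi^\eps$ in the last slot, while $\nabla_\tau^\perp\nabla_\sigma^\perp F_\tau$ is assembled from the first and second derivatives of $\psi^\eps_\tau$. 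Because $\psi^\eps_\tau=\eta_\tau+\eps\chi_\tau$, once $\tau_2-\tau$ is taken to be $o(\eps)$ the quantities of interest have the orders $\grad\psi^\eps_\tau=O(\eps)$ and $\nabla\nabla\psi^\eps_\tau=O(\eps)$ near $q$, so $|\nabla_\tau^\perp\nabla_\sigma^\perp F_\tau|^2=O(\eps^2)$ while $A_\tau(\nabla_\tau^\perp F_\tau,\nabla_\sigma^\perp F_\tau)=\eps\,A_{g_{t_0}}|_q\big(\grad\chi_{t_0}(q),\,P\grad\phi(X_{\tau_1})\big)+o(\eps)$, where $P$ is the isometric stochastic parallel transport $T_{X_{\tau_1}}M\to T_qM$. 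Dividing the inequality \eqref{thmmain: 3} by $\eps$ and letting $\eps\downarrow 0$ gives $A_{g_{t_0}}|_q\big(\grad\chi_{t_0}(q),P\grad\phi(X_{\tau_1})\big)\geq 0$; replacing $\chi$ by $-\chi$ reverses the sign, so this quantity is in fact $0$. Letting $\chi$ and $\phi$ range over $C_0^\infty(M)$ makes the two arguments sweep out all of $T_qM$, so $A_{g_{t_0}}|_q=0$; as this holds for $\Gamma$-a.e.\ Brownian curve it holds for every $q$ by full support of the Wiener measure and continuity of $A$ (as in the proof of Corollary~\ref{c:Bochnerreduction}), and then for all of $M\times[0,T]$ by smoothness. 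That is, $\partial_t(g-b)=-2\Rc^\nabla$.

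The heart of the matter — and the main obstacle — is this last implication: one must produce a two-point test function whose ``late'' gradient slot degenerates at rate $\eps$, with its second derivatives controlled at the same order, while its ``early'' gradient slot remains a prescribed vector, and then recover the \emph{vanishing} (not merely the sign) of $A$ via the scaling limit combined with the sign reversal $\chi\mapsto-\chi$. This genuinely requires two distinct evaluation times, since at coincident times $\nabla_\tau^\perp F_\tau$ and $\nabla_\sigma^\perp F_\tau$ cannot point independently and the one-point reduction of Corollary~\ref{c:Bochnerreduction} alone yields only one-sided ($\partial_t g\geq -2\Rc$-type) bounds. The remaining work — bookkeeping of the stochastic parallel transports and the passage from a $\Gamma$-a.e.\ statement to a pointwise statement on spacetime — is routine.
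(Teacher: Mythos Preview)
Your treatment of the formal implications $\eqref{thmmain: 1}\Rightarrow\eqref{thmmain: 2}\Rightarrow\eqref{thmmain: 4}\Leftrightarrow\eqref{thmmain: 5}\Rightarrow\eqref{thmmain: 3}$ via Corollary~\ref{c: bochner2} and Kato's inequality matches the paper and is correct. The substantive difference is in the closing implication $\eqref{thmmain: 3}\Rightarrow\eqref{thmmain: 1}$. The paper sets $\sigma=0$, combines the submartingale property from \eqref{thmmain: 3} with the quadratic Bochner identity and \emph{takes expectations} to obtain
\[
\E_{(x,T')}\Big[\int_0^\eps\big(A_\tau(\nabla_\tau^\perp F_\tau,\nabla_0^\perp F_\tau)+|\nabla_\tau^\perp\nabla_0^\perp F_\tau|^2\big)\,d\tau\Big]\geq 0,
\]
and then tests against a one-point function $f_1$ with $\nabla f_1(x)=v$, $\nabla^2 f_1(x)=0$ and a two-point function $f_2$ with $\nabla^{(1)}f_2(x,x)=2v$, $\nabla^{(2)}f_2(x,x)=-v$, $\nabla^2 f_2(x,x)=0$, both with evaluation times $\leq\eps$. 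Since $X_0=x$ is deterministic, as $\eps\to0$ everything is evaluated at the basepoint with prescribed jets, yielding $A|_{(x,T')}(v,v)\geq0$ and $A|_{(x,T')}(-v,v)\geq0$, hence $A=0$.

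Your route instead works pathwise at a random $X_\tau$ with $\tau\in(\tau_1,\tau_2)$ and a product test function $\phi(X_{\tau_1})\psi^\eps(X_{\tau_2})$. The idea is sound, but as written there is a gap: the leading $\eps$-term of $A_\tau(\nabla_\tau^\perp F_\tau,\nabla_\sigma^\perp F_\tau)$ carries an additional factor $\phi(X_{\tau_1})$ which you have silently dropped; without controlling its sign the inequality $A(\cdot,\cdot)\geq0$ does not follow. (Using a \emph{sum} $\phi(X_{\tau_1})+\eps\chi(X_{\tau_2})$ in place of the product removes both this factor and the need for the cutoff $\eta$.) You must also manage the null sets when passing to a countable family of $(\eps,\phi,\chi)$, since the drift inequality holds only $\Gamma$-a.e.\ for each fixed $F$. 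The paper's device of collapsing all evaluation times to $0$ and taking expectations sidesteps both issues simultaneously: no cutoffs, no random prefactors, and no pathwise limits are required.
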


\begin{proof}
\eqref{thmmain: 1} $\Rightarrow$ \eqref{thmmain: 2}:
Let $\partial_t(g-b)=-2\Rc^\nabla$. Then the claim directly follows from Corollary \ref{c: bochner2}.

\eqref{thmmain: 2} $\Rightarrow$ \eqref{thmmain: 3}:
This follows from omitting the Hessian part.

\eqref{thmmain: 2} $\Rightarrow$ \eqref{thmmain: 4}:
Assume $\tau>\sigma$. Note that due to Corollary \ref{c: bochner2}, we have that
\begin{equation}\begin{aligned}\label{eq: linearbochner}
d|\nabla_\sigma^\perp F_\tau|=&\IP{\nabla_\tau^\perp|\nabla_\sigma^\perp F_\tau|,dW_\tau}+\frac{|\nabla_\tau^\perp\nabla_\sigma^\perp F_\tau|^2-|\nabla_\tau^\perp|\nabla_\sigma^\perp F_\tau||^2}{|\nabla_\sigma^\perp F_\tau|}\, d\tau\\
&+\frac1{|\nabla_\sigma^\perp F_\tau|}(\Rc^\nabla+\frac12\partial_t(g-b))_\tau(\nabla_\tau^\perp F_\tau,\nabla_\sigma^\perp F_\tau)\, d\tau.
\end{aligned}\end{equation}
 Now, comparing \eqref{eq: linearbochner} with \eqref{thmmain: 2} by standard Ito's lemma and using that $|\nabla_\tau^\perp|\nabla_\sigma^\perp F_\tau||^2\leq |\nabla_\tau^\perp\nabla_\sigma^\perp F_\tau|^2$ we directly find that
\begin{align*}
d|\nabla_\sigma^\perp F_\tau|\geq&\IP{\nabla_\tau^\perp|\nabla_\sigma^\perp F_\tau|,dW_\tau}.
\end{align*}
 Together with $\nabla_\sigma^\perp F_\tau=0$ for $\tau<\sigma$, this yields \eqref{thmmain: 4}.
 
 \eqref{thmmain: 4} $\Rightarrow$ \eqref{thmmain: 3}:
 This follows directly by appling Ito's formula and \eqref{thmmain: 4}.

\eqref{thmmain: 4} $\Leftrightarrow$ \eqref{thmmain: 5}:
Clearly, \eqref{thmmain: 4} implies \eqref{thmmain: 5}. For \eqref{thmmain: 5} $\Rightarrow$ \eqref{thmmain: 4}, note that \eqref{thmmain: 5} implies that the absolutely continuous part in \eqref{eq: linearbochner} must be nonnegative, which deduces \eqref{thmmain: 4}.

\eqref{thmmain: 3} $\Rightarrow$ \eqref{thmmain: 1}:
Let $F\colon P_{(x,T')}\mathcal M\to\R$. By Corollary \ref{c: bochner2} we know that
\begin{align*}
\tau\mapsto |\nabla_0^\perp F_\tau|^2-\int_0^\tau (\Rc^\nabla+\frac12\partial_t(g-b))_\rho(\nabla_\rho^\perp F_\rho,\nabla_0^\perp F_\rho)+2|\nabla_\rho^\perp\nabla_0^\perp F_\rho|^2\, d\rho
\end{align*}
is a martingale and consequently
\begin{align*}
\mathbb E_{(x,T')}\left[ |\nabla_0^\perp F_\varepsilon|^2-\int_0^\varepsilon (\Rc^\nabla+\frac12\partial(g-b))_\tau(\nabla_\tau^\perp F_\tau,\nabla_0^\perp F_\tau)+2|\nabla_\tau^\perp\nabla_0^\perp F_\tau|^2\, d\rho\right]=|\nabla_0^\perp F_0|^2.
\end{align*}
By virtue of \eqref{thmmain: 3} $\tau\mapsto |\nabla_0^\perp F_\tau|^2$ is a submartingale and in particular
\begin{align*}
\mathbb E_{(x,T')}[|\nabla_0^\perp F_\varepsilon |^2]\geq |\nabla_0^\perp F_0|^2.
\end{align*}
Together this implies that
\begin{align}\label{eq: weakbochner1}
\mathbb E_{(x,T')}\left[\int_0^\varepsilon (\Rc^\nabla+\frac12\partial(g-b))_\tau(\nabla_\tau^\perp F_\tau,\nabla_0^\perp F_\tau)+2|\nabla_\tau^\perp\nabla_0^\perp F_\tau|^2\, d\tau\right]\geq0.
\end{align}
 Now we consider two choices of cylinder function for $F$. Let the first one be $f_1\colon M\to\R$ with $f_1(x)=0$, $\nabla f_1(x)=v$, and $\nabla^2f_1(x)=0$, where $v\in (T_xM,g_{T'})$. Then define $F\colon P_{(x,T')}\mathcal M\to\R$ by $F(\gamma)=f_1(\pi_1(\gamma_\varepsilon))$.
For $\tau\leq\varepsilon$ we have
\begin{align*}
\nabla_0^\perp F_\tau=\nabla_\tau^\perp F_\tau=S_\tau\nabla P_{T'-\tau,T'-\varepsilon} f_1(\pi_1(\gamma_\tau)),\quad
|\nabla_\tau^\perp\nabla_0^\perp F_\tau|=|\nabla^2 P_{T'-\tau,T'-\varepsilon}f_1(\pi_1(\gamma_\tau))|
\end{align*}
and thus $\nabla_\tau^\perp F_\tau=v+o(\eps)$ and $|\nabla_\tau^\perp\nabla_0^\perp F_\tau|=o(\eps)$.

The second choice is $f_2\colon M\times M\to\R$ with $f_2(x,x)=0$, $\nabla^{(1)}f_2(x,x)=2v$, $\nabla^{(2)}f_2(x,x)=-v$, and $\nabla^2 f_2(x,x)=0$. Let $F(\gamma)=f_2(\pi_1(\gamma_0),\pi_1(\gamma(\eps)))$. Then, for $\tau\leq \eps$, 
\begin{align*}
F_\tau(\gamma)=&\ P_{T'-\tau,T'-\eps}^{(2)}f_2(x,\pi_1\gamma_\tau),\\
\nabla_0^\perp F_\tau=&\ \nabla^{(1)}P_{T'-\tau,T'-\eps}^{(2)}f_2(x,\pi_1\gamma_\tau)+S_\tau \nabla^{(2)}P_{T'-\tau,T'-\eps}^{(2)}f_2(x,\pi_1\gamma_\tau)=v+o(\eps),\\
\nabla_\tau^\perp F_\tau=&\ S_\tau \nabla^{(2)}P_{T'-\tau,T'-\eps}^{(2)}f_2(x,\pi_1\gamma_\tau)=-v+o(\eps),\\
|\nabla_\tau^\perp\nabla_0^\perp F_\tau| \leq&\ |\nabla^{(2)}\nabla^{(1)} P_{T'-\tau,T'-\eps}^{(2)}f_2(x,\pi_1\gamma_\tau)|
+|\nabla^{(2)} \nabla^{(2)}P_{T'-\tau,T'-\eps}^{(2)}f_2(x,\pi_1\gamma_\tau)|=o(\eps).
\end{align*}
Inserting both choices into \eqref{eq: weakbochner1} we get
\begin{align*}
 (\Rc^\nabla+\frac12\partial(g-b))_\eps(v,v)= (\Rc^\nabla+\frac12\partial_t(g-b))|_{t=T'-\eps}(S_\eps^{-1}v,S_\eps^{-1}v)=o(\eps).
\end{align*}
Letting $\eps\to0$ we get $ (\Rc^\nabla+\frac12\partial_t(g-b))|_{t=T'}=0$.
\end{proof}

\begin{thm}\label{thm: grad}
For an evolving family of manifolds $(M,g_t,H_t)_{t\in[0,T]}$, the following are equivalent:
\begin{enumerate}
\item\label{thmmain2: 1} The generalized Ricci flow 
\begin{align*}
\partial_t(g-b)=-2\Rc^\nabla
\end{align*}
is satisfied.
\item\label{thmmain2: 2}
Let $0\leq\sigma\leq T'\leq T$ and $F\in \cyl$. Then the induced martingales satisfy the gradient estimate
\begin{align*}
|\nabla_\sigma^\perp F_{\tau_1}|\leq \mathbb E_{(x,T')}[|\nabla_\sigma^\perp F_{\tau_2}||\Sigma_{\tau_1}]
\end{align*}
for all $0\leq\tau_1\leq \tau_2\leq T'$ and $x\in M$.
\item\label{thmmain2: 3}
Let $0\leq\sigma\leq T'\leq T$ and $F\in \cyl$. Then the induced martingales satisfy the gradient estimate
\begin{align*}
|\nabla_\sigma^\perp F_{\tau_1}|^2\leq \mathbb E_{(x,T')}[|\nabla_\sigma^\perp F_{\tau_2}|^2|\Sigma_{\tau_1}]
\end{align*}
for all $0\leq\tau_1\leq \tau_2\leq T'$ and $x\in M$.
	\item\label{thmmain2: 4}
		Let $0\leq\sigma\leq T'\leq T$ and $F\in \cyl$. Then we have the gradient estimate
		\begin{align*}
	|\nabla_x\mathbb E_{(x,T')}[ F]|^2\leq \mathbb E_{(x,T')}[|\nabla_0^\perp F|^2]
		\end{align*}
		for all $x\in M$. 
		\item\label{thmmain2: 5}
		For any $T'\in[0,T]$, $F\in \cyl$, the induced martingales satisfy the quadratic variation estimate
		\begin{align*}
	    \mathbb E_{(x,T')}\left[\frac{d[F,F]_\tau}{d\tau}\right]\leq 2\mathbb E_{(x,T')}[	|\nabla_\tau^\perp F|^2].
		\end{align*}
		for all $\tau\in[0,T']$ and $x\in M$.
\end{enumerate}
\end{thm}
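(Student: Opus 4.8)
The plan is to establish the five listed conditions equivalent by running the cycle \eqref{thmmain2: 1} $\Rightarrow$ \eqref{thmmain2: 2} $\Rightarrow$ \eqref{thmmain2: 3} $\Rightarrow$ \eqref{thmmain2: 5} $\Rightarrow$ \eqref{thmmain2: 4} $\Rightarrow$ \eqref{thmmain2: 1}. The first step is immediate from Theorem \ref{theorem: char}: the generalized Ricci flow equation is there shown to be equivalent to $\tau\mapsto|\nabla_\sigma^\perp F_\tau|$ being a submartingale for every $F\in\cyl$, which written out as a conditional-expectation inequality is precisely \eqref{thmmain2: 2}. For \eqref{thmmain2: 2} $\Rightarrow$ \eqref{thmmain2: 3} I would apply the conditional Jensen inequality for the convex, nondecreasing function $r\mapsto r^2$ on $[0,\infty)$ to the nonnegative submartingale $|\nabla_\sigma^\perp F_\tau|$: for $\tau_1\le\tau_2$ this yields $\mathbb E_{(x,T')}[|\nabla_\sigma^\perp F_{\tau_2}|^2\mid\Sigma_{\tau_1}]\ge\big(\mathbb E_{(x,T')}[|\nabla_\sigma^\perp F_{\tau_2}|\mid\Sigma_{\tau_1}]\big)^2\ge|\nabla_\sigma^\perp F_{\tau_1}|^2$, the last inequality using \eqref{thmmain2: 2} together with monotonicity of $r\mapsto r^2$ on $[0,\infty)$.

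For \eqref{thmmain2: 3} $\Rightarrow$ \eqref{thmmain2: 5}, recall from Corollary \ref{cor: quadratic} that $\tfrac{d[F,F]_\tau}{d\tau}=2|\nabla_\tau^\perp F_\tau|^2$, so it suffices to prove $\mathbb E_{(x,T')}[|\nabla_\tau^\perp F_\tau|^2]\le\mathbb E_{(x,T')}[|\nabla_\tau^\perp F|^2]$; this follows by applying \eqref{thmmain2: 3} with $\sigma=\tau_1=\tau$, $\tau_2=T'$ (noting $F_{T'}=F$ a.s., since $F$ is $\Sigma_{T'}$-measurable) and taking expectations. For \eqref{thmmain2: 5} $\Rightarrow$ \eqref{thmmain2: 4} I would evaluate the inequality in \eqref{thmmain2: 5} at $\tau=0$: by Corollary \ref{cor: quadratic} its left-hand side equals $2|\nabla_0^\perp F_0|^2$, and since $F_0=\mathbb E_{(x,T')}[F]$ is deterministic and $S_0=\mathrm{id}$, the martingale representation of Proposition \ref{p:martingale} together with the formula \eqref{eq: parallel} identifies $\nabla_0^\perp F_0$ with $\grad_{g_{T'}}\mathbb E_{(x,T')}[F]$, so that the estimate becomes \eqref{thmmain2: 4}.

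The substantive implication is \eqref{thmmain2: 4} $\Rightarrow$ \eqref{thmmain2: 1}, and I expect it to follow the scheme of \eqref{thmmain: 3} $\Rightarrow$ \eqref{thmmain: 1} in Theorem \ref{theorem: char}. Fix $T'\in(0,T]$ and $\varepsilon\in(0,T')$, and take $F\in\cyl$ whose time-support lies in $[0,\varepsilon]$, so that $F_{T'}=F$ and the Bochner-identity integrand $(\Rc^\nabla+\tfrac12\partial_t(g-b))_\tau(\nabla_\tau^\perp F_\tau,\nabla_0^\perp F_\tau)+2|\nabla_\tau^\perp\nabla_0^\perp F_\tau|^2$ vanishes for $\tau>\varepsilon$. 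Integrating the quadratic Bochner identity of Corollary \ref{c: bochner2} with $\sigma=0$ over $[0,T']$ and taking expectations kills the martingale term, and \eqref{thmmain2: 4}, in the form $|\nabla_0^\perp F_0|^2=|\nabla_x\mathbb E_{(x,T')}[F]|^2\le\mathbb E_{(x,T')}[|\nabla_0^\perp F|^2]=\mathbb E_{(x,T')}[|\nabla_0^\perp F_{T'}|^2]$, forces the remaining drift integral to be nonnegative; this is exactly inequality \eqref{eq: weakbochner1}. From there the rest of the argument of Theorem \ref{theorem: char} applies verbatim: testing \eqref{eq: weakbochner1} against one-point and two-point cylinder functions with prescribed first-order behaviour and vanishing Hessian at $x$, dividing by $\varepsilon$, and letting $\varepsilon\to0$ gives $(\Rc^\nabla+\tfrac12\partial_t(g-b))|_{t=T'}=0$ at $x$, whence \eqref{thmmain2: 1} by arbitrariness of $x$ and $T'$. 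I expect the only delicate point to be the bookkeeping in this last step — verifying that the Hessian contributions are $o(\varepsilon)$ uniformly in $\tau\in[0,\varepsilon]$ while the factors $\nabla_\tau^\perp F_\tau$ and $\nabla_0^\perp F_\tau$ tend to the prescribed vectors — but this is handled identically to Theorem \ref{theorem: char}; the genuinely new feature is only that localizing $F$ to $[0,\varepsilon]$ lets one extract \eqref{eq: weakbochner1} from the pointwise gradient bound \eqref{thmmain2: 4} in place of the submartingale property used there.
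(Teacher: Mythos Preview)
Your proposal is correct and follows essentially the same cycle of implications as the paper's proof; the individual steps match (the paper phrases \eqref{thmmain2: 2}$\Rightarrow$\eqref{thmmain2: 3} via Cauchy--Schwarz rather than conditional Jensen, and for \eqref{thmmain2: 4}$\Rightarrow$\eqref{thmmain2: 1} simply refers to the test-function argument of Theorem~\ref{theorem: char} where you spell it out in detail). No gaps.
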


\begin{proof}
\eqref{thmmain2: 1} $\Rightarrow$ \eqref{thmmain2: 2} $\Rightarrow$ \eqref{thmmain2: 3}:
\eqref{thmmain2: 2} immediately follows from \eqref{eq: linearbochner} and integrating in $\tau$ and taking expectations. Claim \eqref{thmmain2: 3} follows then from \eqref{thmmain2: 2} by Cauchy-Schwarz:
\begin{align*}
|\nabla_\sigma^\perp F_{\tau_1}|^2\leq \left(\mathbb E_{(x,T')}\left[|\nabla_\sigma^\perp F_{\tau_2}||\Sigma_{\tau_1}\right]\right)^2\leq \mathbb E_{(x,T')}\left[|\nabla_\sigma^\perp F_{\tau_2}|^2|\Sigma_{\tau_1}\right]
\end{align*}
for all $\tau_2\geq\tau_1$.

 \eqref{thmmain2: 3} $\Rightarrow$ \eqref{thmmain2: 5}:
Note that according to Theorem \ref{thm: bochner} $d[F,F]_\tau=2|\nabla_\tau^\perp F_\tau|^2\, d\tau$ and hence \eqref{thmmain2: 3} yields
\begin{align*}
\E_{(x,T')}\left[\frac{d[F,F]_\tau}{d\tau}\right]=2\E_{(x,T')}[|\nabla_\tau^\perp F_\tau|^2]
\leq 2\E_{(x,T')}\left[\E_{(x,T')}[|\nabla_\tau^\perp F|^2|\Sigma_\tau]\right]
= 2\E_{(x,T')}[|\nabla_\tau^\perp F|^2].
\end{align*}

\eqref{thmmain2: 5} $\Rightarrow$ \eqref{thmmain2: 4}:
This follows by recalling from Corollary \ref{cor: quadratic} that $\frac{d}{d\tau}[F,F]_\tau=2|\nabla_\tau^\perp F_\tau|^2$ and applying \eqref{thmmain2: 5}.

\eqref{thmmain2: 4} $\Rightarrow$ \eqref{thmmain2: 1}:
This follows by choosing 1-point and 2-point cylinder functions similarly as in the proof of the implication Theorem \ref{theorem: char} \eqref{thmmain: 3} $\Rightarrow$ \eqref{thmmain: 1}. 
\end{proof}


\begin{thm12}[] \label{thm: hessian}
	For an evolving family of manifolds $(M,g_t,H_t)_{t\in[0,T]}$, the following are equivalent:
	\begin{enumerate}
		\item\label{thmmain3: 1} The generalized Ricci flow 
		\begin{align*}
		\partial_t(g-b)=-2\Rc^\nabla
		\end{align*}
		is satisfied.
		\item\label{thmmain3: 2}
		For any $0\leq\sigma\leq T'\leq T$ and any $F\in \cyl$, we have the estimate
		\begin{align*}
	\mathbb E_{(x,T')}[	|\nabla_\sigma^\perp F_{\sigma}|^2]+2 \int_0^{T'} \mathbb E_{(x,T')}[|\nabla_\tau^\perp \nabla_\sigma^\perp F_\tau|^2]\, d\tau \leq \mathbb E_{(x,T')}[|\nabla_\sigma^\perp F|^2] 
		\end{align*}
		 for all $ x\in M$.
			\item\label{thmmain3: 6} For any $0\leq\tau_1\leq \tau_2\leq T'\leq T$ the Ornstein-Uhlenbeck operator $\LL_{(\tau_1,\tau_2)}$ on parabolic path space $L^2(P_{T'}\MM)$ satisfies the Poincar\'e inequality
		\begin{align*}
		\E_{(x,T')}[(F_{\tau_2}-F_{\tau_1})^2]\leq 2\E_{(x,T')}[F\, \LL_{(\tau_1,\tau_2)}F]
		\end{align*}
		for all $x\in M$.
		\item\label{thmmain3: 5} For any $0\leq\tau_1\leq \tau_2\leq T'\leq T$ the Ornstein-Uhlenbeck operator $\LL_{(\tau_1,\tau_2)}$ on parabolic path space $L^2(P_{T'}\MM)$ satisfies the log-Sobolev inequality
		\begin{align*}
		&\mathbb E_{(x,T')}[(F^2)_{\tau_2}\log((F^2)_{\tau_2})-(F^2)_{\tau_1}\log((F^2)_{\tau_1})]
		\leq4\mathbb E_{(x,T')}[F\, \LL_{(\tau_1,\tau_2)}F]
		\end{align*}
		for all $x\in M$.
	\end{enumerate}
Moreover, if one of the conditions \eqref{thmmain3: 1}-\eqref{thmmain3: 5} is satisfied, we have:
\begin{enumerate}
	\item[(3a)]{\label{thmmain3: 3a}}
	For any $0\leq T'\leq T$, $F\in \cyl$, we have the Poincar\'e Hessian estimate
	\begin{align*}
	\mathbb E_{(x,T')}[(F-\mathbb E_{(x,T')}[F])^2]+4\int_0^{T'}\int_0^{T'}\mathbb E_{(x,T')}[	|\nabla_\tau^\perp \nabla_\sigma^\perp F_{\tau}|^2]\, d\sigma\, d\tau\leq 2\int_0^{T'}\mathbb E_{(x,T')}[|\nabla_\sigma^\perp F|^2]\, d\sigma
	\end{align*}
	for all $x\in M$.
	\item[(4a)] {\label{thmmain3: 4a}}For any $0\leq T'\leq T$, $F\in \cyl$, we have the log-Sobolev Hessian estimate
	\begin{align*}
	&\mathbb E_{(x,T')}[F^2\log(F^2)]-\mathbb E_{(x,T')}[F^2]\log(\mathbb E_{(x,T')}[F^2])\\
	&+2\int_0^{T'}\int_0^{T'}\mathbb E_{(x,T')}[(F^2)_\tau |\nabla_\tau^\perp\nabla_\sigma^\perp \log((F^2)_\tau)|^2]\, d\sigma\, d\tau
	\leq 4\int_0^{T'}\mathbb E_{(x,T')}[|\nabla_\sigma^\perp F|^2]\, d\sigma
	\end{align*}
	for all $x\in M$.
\end{enumerate}
\end{thm12}
\begin{proof}
\eqref{thmmain3: 1} $\Rightarrow$ \eqref{thmmain3: 2}:
Assertion \eqref{thmmain3: 2} follows directly from integrating Theorem \ref{theorem: char} \eqref{thmmain: 2} and taking expectations.

\eqref{thmmain3: 2} $\Rightarrow$ \eqref{thmmain3: 6}:
Using Ito's isometry and Theorem \ref{thm: grad} yields
\begin{align*}
\E_{(x,T')}[(F_{\tau_2}-F_{\tau_1})^2]
=
&\ 2\E_{(x,T')}[\int_{\tau_1}^{\tau_2}|\nabla_\sigma^\perp F_\sigma|^2\, d\sigma]\\
\leq&\ 2\E_{(x,T')}[\int_{\tau_1}^{\tau_2}|\nabla_\sigma^\perp F|^2\, d\sigma]
= 2\E_{(x,T')}[F\, \LL_{(\tau_1,\tau_2)}F],
\end{align*}
where we used Theorem \ref{thm: grad} \eqref{thmmain2: 2} in the inequality.

\eqref{thmmain3: 6} $\Rightarrow$ \eqref{thmmain3: 1}: Dividing \eqref{thmmain3: 6} by $\tau_2-\tau_1$ and letting $\tau_2-\tau_1\to0$ we find
\begin{align*}
\E_{(x,T')}\left[\frac{d[F,F]_\tau}{d\tau}\right]\leq 2\E_{(x,T')}[|\nabla_\tau^\perp F|^2],
\end{align*}
which is Theorem \ref{thm: grad} \eqref{thmmain2: 5}.

\eqref{thmmain3: 1} $\Rightarrow$ \eqref{thmmain3: 5}:
Take $G=F^2$.
Then, by \eqref{eq: x1}
\begin{align*}
\E_{(x,T')}[(F^2)_{\tau_2}\log(F^2)_{\tau_2}-(F^2)_{\tau_1}\log(F^2)_{\tau_1}]
=&\ \E_{(x,T')}\left[\int_{\tau_1}^{\tau_2}G_{\tau}^{-1}|\nabla_\tau^\perp G_\tau|^2\, d\tau\right]\\
\leq
&\ \E_{(x,T')}\left[\int_{\tau_1}^{\tau_2}G_\tau^{-1}\E_{(x,T')}\left[|\nabla_\tau^\perp G_{T'}||\Sigma_\tau\right]^2\, d\tau\right]\\
\leq&\ 
4\E_{(x,T')}\left[\int_{\tau_1}^{\tau_2}|\nabla_\tau^\perp F|^2\, d\tau\right]\\
=&\ 4\E_{(x,T')}[F\LL_{(\tau_1,\tau_2)}F],
\end{align*}
where we used Theorem \ref{thm: grad} \eqref{thmmain2: 2} in the second step and Cauchy-Schwarz in the third.

\eqref{thmmain3: 5} $\Rightarrow$ \eqref{thmmain3: 6}: We apply \eqref{thmmain3: 5} to $F^2=1+\eps G$ and obtain by Taylor approximation
\begin{align*}
\frac12\E_{(x,T')}[\eps^2 G_{\tau_2}^2-\eps^2 G_{\tau_1}^2]\leq \eps^2\E_{(x,T')}[G\LL_{(\tau_1,\tau_2)}G]+o(\eps^2).
\end{align*}
Dividing by $\eps^2$ and letting $\eps\to0$ we obtain
\begin{align*}
\frac12\E_{(x,T')}[G_{\tau_2}^2-G_{\tau_1}^2]\leq \E_{(x,T')}[G\LL_{(\tau_1,\tau_2)}G].
\end{align*}
Noticing that $\E_{(x,T')}[G_{\tau_2}^2-G_{\tau_1}^2]=\E_{(x,T')}[(G_{\tau_2}-G_{\tau_1})^2]$ proves the claim.

This proves  the equivalence of \eqref{thmmain3: 1}-\eqref{thmmain3: 5}. Next we show the remaining implications.

\eqref{thmmain3: 2} $\Rightarrow$ \hyperref[thmmain3: 3a]{(3a)}:
Apply Proposition \ref{p:martingale} and Ito's isometry and integrate \eqref{thmmain3: 2} on $(0,T')$.

\eqref{thmmain3: 1} $\Rightarrow$ \hyperref[thmmain3: 4a]{(4a)}:
Let $G=F^2$ and consider $X_\tau=G_\tau^{-1}|\nabla^\HH G_\tau|^2-G_\tau\log(G_\tau)$.
Note that according to Corollary \ref{c: bochner3}, we have that 
\begin{align*}
dX_\tau=&\ \IP{\nabla_\tau^\perp X_\tau,dW_\tau}+2G_\tau\left(\int_0^{T'}|\nabla_\tau^\perp\nabla_\sigma^\perp \log(G_\tau)|^2\, d\sigma\right)\, d\tau\\
&\ +2G_\tau^{-1}\left(\int_0^{T'}(\Rc^{\nabla}+\frac12\partial_t(g-b))_\tau(\nabla_\tau^\perp G_\tau,\nabla_\sigma^\perp G_\tau)\, d\sigma\right)\, d\tau\\
\geq &\ \IP{\nabla_\tau^\perp X_\tau,dW_\tau}+2G_\tau\left(\int_0^{T'}|\nabla_\tau^\perp\nabla_\sigma^\perp \log(G_\tau)|^2\, d\sigma\right)\, d\tau,
\end{align*}
where we used \eqref{thmmain3: 1} in the last equation.
Integration and taking expectations yields
\begin{align*}
\E_{(x,T')}[X_T']-\E_{(x,T')}[X_0]\geq\ 2\E_{(x,T')}\left[\int_0^{T'}G_\tau\left(\int_0^{T'}|\nabla_\tau^\perp\nabla_\sigma^\perp \log(G_\tau)|^2\, d\sigma\right)\, d\tau\right],
\end{align*}
and evaluating the expectations on the left hand side
\begin{align*}
\E_{(x,T')}[X_0]=&\ -\E_{(x,T')}[F^2]\log(\E_{(x,T')}[F^2])\\
\E_{(x,T')}[X_{T'}]=&\ 4\E_{(x,T')}[|\nabla^\HH F|^2]-\E_{(x,T')}[F^2\log(F^2)].
\end{align*}
Putting everything together yields
\begin{align*}
& 4\E_{(x,T')}[|\nabla^\HH F|^2]-\E_{(x,T')}[F^2\log(F^2)]
+\E_{(x,T')}[F^2]\log(\E_{(x,T')}[F^2])\\
&\ \qquad \geq\ 2\int_0^{T'}\int_0^{T'}\E_{(x,T')}\left[(F^2)_\tau|\nabla_\tau^\perp\nabla_\sigma^\perp \log((F^2)_\tau)|^2\right]\, d\sigma\, d\tau,
\end{align*}
which is \hyperref[thmmain3: 4a]{(4a)}.

	\end{proof}

\section{Appendix: Integration by parts} \label{s:IBP}

\begin{proof}[Proof of Theorem \ref{t:IBP}]
	Since $D_V$ satisfies the product rule, it is enough to show that
\begin{align*}
\E_{(x,T')}[D_VF]=\frac12\E_{(x,T')}\left[F\int_0^{T'}\IP{\frac{d}{d\tau}h_\tau-S_\tau(\Rc^\nabla+\frac12\partial_t(g-b))_{T'-\tau}^\dagger S_\tau^{-1}h_\tau,\, dW_\tau}\right]
\end{align*}
for all $F\in\cyl$. We prove this by induction on the order $k$ of the cylinder function $F$.

$k=1$:
Let $F(\gamma)=f(x_\sigma)$ and let $s=T'-\sigma$. Since $w(x,t)=P_{t,s}f(x)$ satisfies the heat equation, the gradient satisfies
\begin{align*}
\nabla_t\grad_{g_t}w=\Delta_{g_t}w-(\Rc^\nabla+\frac12\partial_t(g_t-b_t))(\grad_{g_t}w,\cdot)^{\#_{g_t}}
\end{align*}
by Proposition \ref{p:gradientev}. By the Feynman-Kac formula (Proposition \ref{prop: feynmankac}) we have
\begin{align}\label{eq: feynmankac}
\grad_{g_{T'}}w(x,T')=\E_{(x,T')}[R_\sigma\, S_\sigma\, \grad_{g_s}f(X_\sigma)],
\end{align}
where $R_\tau=R_\tau(\gamma)\colon (T_xM,g_{T'})\to(T_xM,g_{T'})$ solves the ODE $\frac{d}{d\tau}R_\tau=R_\tau\, S_\tau\, (\Rc^\nabla+\frac12\partial_t(g-b))_{T'-\tau}S_\tau^{-1}$ with $R_0=\mathrm{id}$, and where we view $(\Rc^\nabla+\frac12\partial_t(g-b))_{T'-\tau}$ as endomorphism of $TM$ using the metric $g_{T'-\tau}$.  Note also by \eqref{eq: heatrep}
\begin{align}\label{eq: heatrep2}
f(X_\sigma)=w(x,T')+\int_0^\sigma E_i \til w(U_\tau)\, dW_\tau^i,
\end{align}
where $\til w$ is the invariant lift.

Let $(z_\tau)_{\tau\in[0,T']}\in\HH$. Then by \eqref{eq: heatrep2} and Ito's isometry
\begin{align*}
\E_{(x,T')}\left[f(X_\sigma)\int_0^\sigma\IP{R_\tau^\dagger\dot z_\tau,dW_\tau}\right]
=&\ \E_{(x,T')}\left[\int_0^\sigma E_i\til w(U_\tau)\, dW_\tau^i\, \int_0^\sigma\IP{R_\tau^\dagger\dot z_\tau,dW_\tau}\right]\\
=&\ 2\E_{(x,T')}\left[\int_0^\sigma\IP{ \nabla^E\til w(U_\tau),U_0^{-1} R_\tau^\dagger\dot z_\tau}\, d\tau\right]\\
=&\ 2\E_{(x,T')}\left[\int_0^\sigma\IP{ R_\tau U_0\nabla^E\til w(U_\tau), \dot z_\tau}_{g_T'}\, d\tau\right],
\end{align*}
where $R_\tau^\dagger$ is the transpose of $R_\tau$. 

Let $N_\tau:=R_\tau U_0\nabla^E\til w(U_\tau)=R_\tau S_\tau \grad_{g_{T'-\tau}}w(X_\tau,T'-\tau)$.
Integration by parts gives
\begin{align*}
\E_{(x,T')}\left[\int_0^\sigma\IP{ N_\tau, \dot z_\tau}_{g_{T'}}\, d\tau\right]
=\E_{(x,T')}\left[\IP{z_\sigma,N_\sigma}_{g_{T'}}-\int_0^\sigma\IP{ z_\tau, dN_\tau}_{g_{T'}}\, d\tau\right]
=\E_{(x,T')}\left[\IP{ z_\sigma, N_\sigma}_{g_{T'}}\right],
\end{align*}
where we used in the last step that $N_\tau$ is a martingale, cf. equation \eqref{eq: heatrep3}. With this we obtain
\begin{align*}
\E_{(x,T')}\left[f(X_\sigma)\int_0^{T'}\IP{ R_\tau^\dagger \dot z_\tau, dW_\tau}\right]
=2\E_{(x,T')}\left[\IP{ R_\sigma^\dagger z_\sigma, S_\sigma \grad_{g_\sigma} f(X_\sigma)}_{g_{T'}}\right],
\end{align*}
where we used that $\E_{(x,T')}\left[f(X_\sigma)\int_\sigma^{T'}\IP{R_\tau^\dagger\dot z_\tau,dW_\tau}\right]=0$.
Finally, let $h_\tau=R_\tau^\dagger z_\tau$. Then
\begin{align*}
R_\tau^\dagger\dot z_\tau=\dot h_\tau-S_\tau(\Rc^\nabla+\frac12\partial_t(g-b))^\dagger_{T'-\tau}S_\tau^{-1}h_\tau,
\end{align*}
wich is the claim.

Now we prove the inductive step, assuming the result for $k-1$-point cylinder functions.  Let $F(\gamma)=f(x_{\sigma_1},\ldots, x_{\sigma_k})$ and let $s_i=T'-\sigma_i$. Then
\begin{align}\label{eq: intbyparts1}
\E_{(x,T')}[D_VF]= \sum_{j=1}^k\E_{(x,T')}\IP{h_{\sigma_j},S_{\sigma_j}\grad_{g_{s_j}}^{(j)}f(X_{\sigma_1},\ldots,X_{\sigma_k})}_{g_{T'}}.
\end{align}
We define a function 
\begin{align*}
\alpha(x_1,\ldots,x_{k-1}):=\E_{(x_{k-1},s_{k-1})}f(x_1,\ldots,x_{k-1},X_{\sigma_k-\sigma_{k-1}}'),
\end{align*}
where $X'$ is based at $x_{k-1}$. Then, for $j=1,\ldots,k-2$, we have
\begin{align}\label{eq: intbyparts2}
\grad_{g_{s_j}}^{(j)}\alpha(x_1,\ldots,x_{k-1})=\E_{(x_{k-1},s_{k-1})}\grad_{g_{s_j}}^{(j)}f(x_1,\ldots,x_{k-1},X_{\sigma_k-\sigma_{k-1}}').
\end{align}
For $j=k-1$ we have by the product rule and \eqref{eq: feynmankac}
\begin{gather}\label{eq: intbyparts3}
\begin{split}
\grad_{g_{s_{k-1}}}^{(k-1)}\alpha(x_1,\ldots,x_{k-1})=&\ \E_{(x_{k-1},s_{k-1})}\grad_{g_{s_{k-1}}}^{(k-1)}f(x_1,\ldots,x_{k-1},X_{\sigma_k-\sigma_{k-1}}')\\
&\ +\E_{(x_{k-1},s_{k-1})}[R'_{\sigma_k-\sigma_{k-1}}S'_{\sigma_k-\sigma_{k-1}}\grad_{g_{s_k}}f(x_1,\ldots,x_{k-1},X_{\sigma_k-\sigma_{k-1}}')],
\end{split}
\end{gather}
where $R_\tau'=R_\tau'(\gamma)\colon (T_{x_{k-1}}M,g_{s_{k-1}})\to(T_{x_{k-1}}M,g_{s_{k-1}})$ solves the ODE
$\frac{d}{d\tau}R_\tau'=R_\tau'S_\tau'(\Rc^\nabla+\frac12\partial_t(g-b))_{s_{k-1}-\tau}S_\tau'^{-1}$ with $R_0=\mathrm{id}$. Now, let $\G\colon P_{(x,T')}\MM\to\R$ be the $(k-1)$-point cylinder function induced by $\alpha$
\begin{align*}
G(\gamma)=\alpha(x_{\sigma_1},\cdots,x_{\sigma_{k-1}}).
\end{align*}
Then, by \eqref{eq: intbyparts1}, \eqref{eq: intbyparts2}, and \eqref{eq: intbyparts3}, and by the law of total expectation
\begin{equation}
\begin{aligned}\label{eq: intbyparts4}
&\E_{(x,T')}[D_VF]=\E_{(x,T')}[D_VG]+\E_{(x,T')}\left[\IP{h_{\sigma_k},S_{\sigma_k}\grad_{g_{s_k}}^{(k)}f(X_{\sigma_1},\ldots,X_{\sigma_k})}_{g_{T'}}\right]\\
&-\E_{(x,T')}\left[\E_{(X_{\sigma_{k-1}},s_{k-1})}\left[\IP{h_{\sigma_{k-1}},S_{\sigma_{k-1}}R'_{\sigma_k-\sigma_{k-1}}
	S'_{\sigma_k-\sigma_{k-1}}\grad_{g_{s_k}}f(X_{\sigma_1},\ldots,X_{\sigma_{k-1}},X_{\sigma_k-\sigma_{k-1}}')}_{g_{T'}}\right]\right]
\end{aligned}
\end{equation}

By induction, the claim holds for $k-1$ and thus
\begin{align}
\E_{(x,T')}[D_VG]=\frac12\E_{(x,T')}\left[G\int_0^{s_{k-1}}\IP{\frac{d}{d\tau}h_\tau-S_\tau(\Rc^\nabla+\frac12\partial_t(g-b))_{T'-\tau}^\dagger S_\tau^{-1}h_\tau,\, dW_\tau}\right].\label{eq: add3}
\end{align}
The second term on the right hand side of \eqref{eq: intbyparts4} we decompose into
\begin{equation}\begin{aligned}\label{eq: markov}
\E_{(x,T')}& \left[\IP{h_{\sigma_k},S_{\sigma_k}\grad_{g_{s_k}}^{(k)}f(X_{\sigma_1},\ldots,X_{\sigma_k})}_{g_{T'}}\right]\\
=&\
\E_{(x,T')}\left[\IP{h_{\sigma_k}-h_{\sigma_{k-1}},S_{\sigma_k}\grad_{g_{s_k}}^{(k)}f(X_{\sigma_1},\ldots,X_{\sigma_k})}_{g_{T'}}\right] +\E_{(x,T')}\left[\IP{h_{\sigma_{k-1}},S_{\sigma_k}\grad_{g_{s_k}}^{(k)}f(X_{\sigma_1},\ldots,X_{\sigma_k})}_{g_{T'}}\right].
\end{aligned}\end{equation}
Then, for the first term in \eqref{eq: markov} the Markov property at $(X_{\sigma_{k-1}},s_{k-1})$ together with the induction hypothesis for one-point cylinder functions imply
\begin{gather} \label{eq: add2}
\begin{split}
&\E_{(x,T')}\left[\IP{h_{\sigma_k}-h_{\sigma_{k-1}},S_{\sigma_k}\grad_{g_{s_k}}^{(k)}f(X_{\sigma_1},\ldots,X_{\sigma_k})}_{g_{T'}}\right]\\
=&\ \E_{(x,T')}\E_{(X_{\sigma_{k-1}},s_{k-1})}\left[\IP{S_{\sigma_{k-1}}^{-1}(h_{\sigma_k}-h_{\sigma_{k-1}}),S_{\sigma_k-\sigma_{k-1}}'\grad_{g_{s_k}}^{(k)}f(X_{\sigma_1},\ldots,X_{\sigma_{k-1}},X_{\sigma_k-\sigma_{k-1}}')}_{g_{s_{k-1}}}\right]\\
=&\ \frac12\E_{(x,T')}\left[F\int_{s_{k-1}}^{s_k}\IP{\frac{d}{d\tau}h_\tau-S_\tau(\Rc^\nabla+\frac12\partial_t(g-b))^\dagger S_\tau^{-1}(h_\tau-h_{\sigma_{k-1}}),dW_\tau}\right].
\end{split}
\end{gather}
Similarly for the other term in \eqref{eq: markov}
\begin{align*}
&\E_{(x,T')}\left[\IP{h_{\sigma_{k-1}},S_{\sigma_k}\grad_{g_{s_k}}^{(k)}f(X_{\sigma_1},\ldots,X_{\sigma_k})}_{g_{T'}}\right]\\
=&\ \E_{(x,T')}\E_{(X_{\sigma_{k-1}},s_{k-1})}\left[\IP{S_{\sigma_{k-1}}^{-1}h_{\sigma_{k-1}},S_{\sigma_k-\sigma_{k-1}}'\grad_{g_{s_k}}^{(k)}f(X_{\sigma_1},\ldots,X_{\sigma_{k-1}},X_{\sigma_k-\sigma_{k-1}}')}_{g_{s_{k-1}}}\right],
\end{align*}
which we combine with the third term on the right hand side in \eqref{eq: intbyparts4} and obtain
\begin{gather} \label{eq: add1}
\begin{split}
&\E_{(x,T')}\E_{(X_{\sigma_{k-1}},s_{k-1})}\left[\IP{S_{\sigma_{k-1}}^{-1}h_{\sigma_{k-1}},S_{\sigma_k-\sigma_{k-1}}'\grad_{g_{s_k}}^{(k)}f(X_{\sigma_1},\ldots,X_{\sigma_{k-1}},X_{\sigma_k-\sigma_{k-1}}')}_{g_{s_{k-1}}}\right]\\
&-\E_{(x,T')}\E_{(X_{\sigma_{k-1}},s_{k-1})}\left[\IP{h_{\sigma_{k-1}},S_{\sigma_{k-1}}R'_{\sigma_k-\sigma_{k-1}}
	S'_{\sigma_k-\sigma_{k-1}}\grad_{g_{s_k}}f(X_{\sigma_1},\ldots,X_{\sigma_{k-1}},X_{\sigma_k-\sigma_{k-1}}')}_{g_{T'}}\right]\\
=&\ \E_{(x,T')}\E_{(X_{\sigma_{k-1}},s_{k-1})}\left[\IP{(\mathrm{id}-R_{\sigma_{k}-\sigma_{k-1}}')^\dagger S_{\sigma_{k-1}}^{-1}h_{\sigma_{k-1}},S_{\sigma_k-\sigma_{k-1}}'\grad_{g_{s_k}}f(X_{\sigma_1},\ldots,X_{\sigma_{k-1}},X_{\sigma_k-\sigma_{k-1}}')}_{g_{s_{k-1}}}\right]\\
=&\ -\frac12\E_{(x,T')}\left[F\int_{s_{k-1}}^{s_k}\IP{S_{\tau}(\Rc^\nabla+\frac12\partial_t(g-b))^\dagger S_{\tau}^{-1}h_{\sigma_{k-1}},dW_\tau}\right].
\end{split}
\end{gather}
Here, we used the induction hypothesis for
\begin{align*}
w_\tau:=(\mathrm{id}-R_{\tau-\sigma_{k-1}}'^\dagger)S_{\sigma_{k-1}}^{-1}h_{\sigma_{k-1}}.
\end{align*}
Then a simple calculation shows that
\begin{align*}
\frac{d}{d\tau}w_\tau-S_{\tau-\sigma_{k-1}}'(\Rc^\nabla+\frac12\partial_t(g-b))^\dagger S_{\tau-\sigma_{k-1}}'^{-1}w_\tau=-S_{\tau-\sigma_{k-1}}'(\Rc^\nabla+\frac12\partial_t(g-b))^\dagger S_{\tau-\sigma_{k-1}}'^{-1}S_{\sigma_{k-1}}^{-1}h_{\sigma_{k-1}},
\end{align*}
which gives \eqref{eq: add1}.

Adding \eqref{eq: add3}, \eqref{eq: add2}, \eqref{eq: add1}, we obtain
\begin{align*}
\E_{(x,T')}[D_VF]=&\ \frac12\E_{(x,T')}\left[G\int_0^{s_{k-1}}\IP{\frac{d}{d\tau}h_\tau-S_\tau(\Rc^\nabla+\frac12\partial_t(g-b))_{T'-\tau}^\dagger S_\tau^{-1}h_\tau,\, dW_\tau}\right]\\
&+\frac12\E_{(x,T')}\left[F\int_{s_{k-1}}^{s_k}\IP{\frac{d}{d\tau}h_\tau-S_\tau(\Rc^\nabla+\frac12\partial_t(g-b))^\dagger S_\tau^{-1}(h_\tau-h_{\sigma_{k-1}}),dW_\tau}\right]\\
&-\frac12\E_{(x,T')}\left[F\int_{s_{k-1}}^{s_k}\IP{S_{\tau}(\Rc^\nabla+\frac12\partial_t(g-b))^\dagger S_{\tau}^{-1}h_{\sigma_{k-1}},dW_\tau}\right]\\
=&\ \frac12\E_{(x,T')}\left[F\int_0^{T'}\IP{\frac{d}{d\tau}h_\tau-S_\tau(\Rc^\nabla+\frac12\partial_t(g-b))_{T'-\tau}^\dagger S_\tau^{-1}h_\tau,\, dW_\tau}\right],
\end{align*}
which proves the theorem.
\end{proof}
	

\end{document}